 \font \eightrm=cmr8
 \newcommand{\nc}{\newcommand}
\nc{\surj}{\to\hskip -3mm \to}
\newtheorem{thm}{Theorem}
\newtheorem{exam}[thm]{Example}
\newtheorem{cor}[thm]{Corollary}
\newtheorem{lem}[thm]{Lemma}
\newtheorem{prop}[thm]{Proposition}
\newtheorem{defn}[thm]{Definition}
\newtheorem{rmk}[thm]{Remark}
\nc{\ignore}[1]{{}}
\nc{\mrm}[1]{{\rm #1}}
\nc{\dirlim}{\displaystyle{\lim_{\longrightarrow}}\,}
\nc{\invlim}{\displaystyle{\lim_{\longleftarrow}}\,}
\nc{\vep}{\varepsilon} \nc{\ep}{\epsilon}
\nc{\sigmat}{\widetilde\sigma}
\nc{\ostar}{\overline{*}}
\nc{\mchar}{\mrm{Char}}
\nc{\Hom}{\mrm{Hom}}
\nc{\id}{\mrm{id}}
\nc{\remark}{\noindent{\bf{Remark:}}}
\nc{\remarks}{\noindent{\bf{Remarks:}}}
 \nc{\delete}[1]{}
 \nc{\grad}[1]{^{({#1})}}
 \nc{\fil}[1]{_{#1}}
\newcommand{\harpoonl}{\overset{\leftharpoonup}}
\newcommand{\harpoonr}{\overset{\rightharpoonup}}
\nc{\dd}{\mathrm{d}}
\nc{\BA}{{\mathbb A}} \nc{\CC}{{\mathbb C}} \nc{\DD}{{\mathbb D}}
\nc{\EE}{{\mathbb E}} \nc{\FF}{{\mathbb F}} \nc{\GG}{{\mathbb G}}
\nc{\HH}{{\mathbb H}} \nc{\LL}{{\mathbb L}} \nc{\NN}{{\mathbb N}}
\nc{\PP}{{\mathbb P}} \nc{\QQ}{{\mathbb Q}} \nc{\RR}{{\mathbb R}}
\nc{\TT}{{\mathbb T}} \nc{\VV}{{\mathbb V}} \nc{\ZZ}{{\mathbb Z}}
\nc{\Cal}[1]{{\mathcal {#1}}}
\nc{\mop}[1]{\mathop{\hbox {\rm #1} }\nolimits}
\nc{\smop}[1]{\mathop{\hbox {\eightrm #1} }\nolimits}
\nc{\mopl}[1]{\mathop{\hbox {\rm #1} }\limits}
\nc{\frakg}{{\frak g}}
\nc{\g}[1]{{\frak {#1}}}
\def \restr#1{\mathstrut_{\textstyle |}\raise-8pt\hbox{$\scriptstyle #1$}}
\def \srestr#1{\mathstrut_{\scriptstyle |}\hbox to
  -1.5pt{}\raise-4pt\hbox{$\scriptscriptstyle #1$}}
\nc{\wt}{\widetilde}
\nc{\wh}{\widehat}
\nc{\un}{\hbox{\bf 1}}
\nc{\redtext}[1]{\textcolor{red}{\tt #1}}
\nc{\bluetext}[1]{\textcolor{blue}{#1}}
\nc{\R}{\mathbb R}
\nc\fleche[1]{\mathop{\hbox to #1 mm{\rightarrowfill}}\limits}
\def\semi{\mathrel{\times}\kern -.85pt\joinrel\mathrel{\raise
    1.4pt\hbox{${\scriptscriptstyle |}$}}}
\nc{\np}{/\hskip -2.3mm\pi}
\nc{\snp}{/\hskip -1.8mm\pi}
\def\ta1{{\scalebox{0.2}{ 
\begin{picture}(12,12)(38,-38)
\SetWidth{0.5} \SetColor{Black} \Vertex(45,-33){5.66}
\end{picture}}}}
\begin{document}

\title[Rota--Baxter Algebra methods in Integral Calculus]
      {From iterated integrals and chronological\\ calculus to Hopf and Rota--Baxter algebras}

\author{Kurusch Ebrahimi-Fard}
\address{Department of Mathematical Sciences, 
		Norwegian University of Science and Technology (NTNU), 
		7491 Trondheim, Norway.}
         \email{kurusch.ebrahimi-fard@ntnu.no}         
         \urladdr{https://folk.ntnu.no/kurusche/}

\author{Fr\'ed\'eric Patras}
\address{Universit\'e C\^ote d'Azur,
               CNRS, 
               Laboratoire J.-A.~Dieudonn\'e, UMR 7351,
         		Parc Valrose,
         		06108 Nice Cedex 02, France.}
\email{patras@unice.fr}
\urladdr{www-math.unice.fr/$\sim$patras}	    

\date{}

\begin{abstract}
Gian-Carlo Rota mentioned in one of his last articles the problem of developing a theory around the notion of integration algebras, which should be dual to the one of differential algebras. This idea has been developed historically along three lines: using properties of iterated integrals and generalisations thereof, as in the theory of rough paths; using the algebraic structures underlying chronological calculus such as shuffle and pre-Lie products, as they appear in theoretical physics and control theory; and, more generally, using a particular operator identity which came to be known as Rota--Baxter relation. The recent developments along each of these lines of research and their various application domains are not always known to other communities in mathematics and related fields. The general aim of this survey is therefore to present a modern and unified perspective on these approaches, featuring among others non-commutative Rota--Baxter algebras and related Hopf and Lie algebraic structures such as descent algebras, algebras of simplices as well as shuffle, pre- and post-Lie algebras and their enveloping algebras. Accordingly, two viewpoints are proposed. A geometric one, that can be derived using combinatorial properties of Euclidean simplices and the duality between integrands and integration domains in iterated integrals. An algebraic one, provided by the axiomatisation of integral calculus in terms of Rota--Baxter~algebra.
\end{abstract}

\maketitle

\keywords{{\small{Keywords: Rota--Baxter algebra; Spitzer identity; Bohnenblust--Spitzer identity, ordinary differential equations; 
Hopf algebra; permutations; pre- and post-Lie algebra; (quasi-)shuffle algebra; iterated integral; chronological calculus.}}}


\tableofcontents


\section{Introduction}
\label{sect:intro}

Chen's theory of iterated integrals \cite{Chen71} and the so-called chronological calculus (the latter being based on the use of time-ordered products of operators $M(t)$ and $N(t)$ such as $\int_0^t\dot{M}(u)N(u)du$, as for example in the seminal works of Agrachev and Gamkrelidze \cite{AG}) are classical topics in analysis, stochastic integration and control theory. They have evolved recently through the introduction of combinatorial Hopf algebra techniques such as the ones related to permutations or (free) pre-Lie algebras and their enveloping algebras. 

In the first part of the present article (sections 2 and 3), we survey these developments using a simple but not popular enough observation, namely: the fundamental duality between integration domains and integrands governing calculus with iterated integrals. This approach, originating in the combinatorial properties of simplices and its close links with the combinatorics of symmetric groups and Lie algebras \cite{patring}, was developed in \cite{AG2,bmp} to deal with operator-valued iterated integrals and in \cite{patit} in the context of Chen's approach to loop space cohomology. It allows to encode the usual algebraic free Lie and pre-Lie techniques in geometric terms. The philosophy underlying these constructions is in the vein of Chen's, in that it relates in the end the rules of classical calculus with the ones of simplicial calculus in algebraic topology, allowing to extend Chen's ideas to various non-commutative and non-associative structures.

The second part of the article (section 4) extends these ideas and techniques to a broader context, making also explicit the link (following in particular the ideas put forward in \cite{EFP2014}) between the direct approach of the first part (sections 2 and 3) and the algebra-axiomatic one in the second part. The latter can be traced back to the period 1960 --  1972, when Pierre~Cartier and Gian-Carlo~Rota, together with collaborators, set out to develop a general theory of the algebraic and combinatorial structures underlying integral calculus. Nowadays, this approach has been subsumed by what is better known as the theory of Rota--Baxter algebras. However, in spite of the fact that those algebras need not to be commutative, a large part of the principal results had first been obtained in the context of commutative function algebras. Interest in the general, i.e., non-commutative case, arose around 2000, in areas including, among others, integral and finite difference calculus for operator algebras. Perturbative quantum field theory provided a rather unexpected context in which non-commutative Rota--Baxter algebras came to prominence; more precisely, the Hopf algebraic approach to the so-called BPHZ renormalisation program \cite{CK1} (describing a specific subtraction procedure by which the regularised amplitudes corresponding to Feynman graphs can be rendered finite). Since then the range of applications of the theory of non-commutative Rota--Baxter algebras has grown steadily. In \cite{Guo,KuRoYa,Rota3} the reader will find useful introductory surveys. 

\smallskip

We should emphasise that the second part (section 4) of the work at hand does neither strive for an exhaustive presentation of the theory of Rota--Baxter algebras nor does it seek to include a complete list of references. The latter restriction is mainly due to the extensive developments that have taken place during the last ten years, which resulted in numerous ramifications of the theory of Rota--Baxter algebras into other fields. The present survey instead offers some insights into recent progress made in a particular direction -- maybe the most significative one from the point of view of integral and differential calculus -- namely the extension of the fundamental results of the commutative theory to the Hopf algebraic and non-commutative settings. The results presented in this survey were obtained mainly by the authors, various of them together with Jos\'e Gracia-Bond\'\i a and Dominique Manchon, and include, among others, the Bohnenblust--Spitzer identity and the construction of the standard Rota--Baxter algebra (a generalisation to the non-commutative setting of Rota's presentation of the free Rota--Baxter algebra in terms of symmetric functions).

\smallskip

The article is organised into two parts. The first one (Sections \ref{sect:perm} - \ref{sect:chrono}) commences by surveying the Hopf algebraic interpretation of classical iterated integral calculus and its rooting in the geometry of simplices. This part includes the case of tree-shaped iterated integrals, relevant in many situations (for example, time-dependent perturbation theory, non-geometric rough paths, etc.). Chronological calculus and the associated pre-Lie structures are introduced, together with the construction of their enveloping algebras. Fundamental objects are considered, such as the Magnus operator and its inverse, arising from logarithmic and exponential maps.

The second part (Section \ref{sect:RBA}) begins with a very brief historic account on the works by Baxter, Cartier, Chen, Rota, and others. Then the notion of Rota--Baxter algebra is introduced and its physical as well as probabilistic origins are recalled by presenting several classical examples of such algebras that stem mostly (but not exclusively) from analysis. Algebraic structures related to the Rota--Baxter identity, such as (quasi-)shuffle, post- and pre-Lie algebras as well as Atkinson's factorisation are discussed. The latter is then linked in the commutative case to Spitzer's classical identity. The approaches of Cartier and Rota to constructions of free commutative Rota--Baxter algebras are then briefly outlined. Links with the theories of symmetric functions, commutative shuffle algebras and fundamental (Spitzer-type) identities are put forward as well. This presentation of the commutative theory is tuned toward its non-commutative extension as well as the inclusion of Hopf algebraic properties; this approach is neither standard nor, in some sense, the most natural one from the point of view of the commutative setting, but it has the advantage of making transparent the ideas underlying the transition from the commutative to the non-commutative realm.

We enter then the core of the subject, i.e., the theory of non-commutative Rota--Baxter algebras, with a view towards Hopf algebraic properties. Our approach is based mainly on three ideas. First, the existence of non-commutative analogs of most of the fundamental commutative constructions. This idea is illustrated by the close relationship between the structure of free Rota--Baxter algebras and non-commutative symmetric functions \cite{gelfand}. Then, the existence of non-commutative analogs of most of the fundamental commutative formulas is shown. Eventually, we would like to emphasise that calculus in Rota--Baxter algebras is in some sense ``generic'' with respect to several theories, in the sense that calculus in free Rota--Baxter algebras models calculus in various domains, very much like tensor calculus is generic for the theory of iterated integrals and Lie algebras\footnote{In a sense that can be made rigorous, see for instance \cite{Reutenauer}. More precisely, from a technical viewpoint, formulas in tensor algebras can be shown to hold in arbitrary enveloping algebras of graded Lie algebras. Similarly, formulas that hold in free Rota--Baxter algebras hold in any pre-Lie or shuffle algebra.}. Theories to which such a remark applies in the non-commutative framework include the one of iterated integrals and sums of operator valued functions (corresponding respectively to non-commutative shuffle and quasi-shuffle products) as well as of derivations and differential operators (corresponding to pre-Lie products). Concretely, all this points at the observation that it is often convenient to work within the setting of Rota--Baxter algebras when it comes to looking for universal formulas solving problems in integral calculus, difference calculus, dynamical systems, and related fields.

\medskip

All algebraic structures are defined over the ground field $k$ of characteristic zero. They are graded if their structure maps are defined in the tensor category of graded vector spaces, that is, if objects are graded vector spaces and the structure maps respect the grading. Connectedness in the graded sense refers to the degree zero component being isomorphic to the ground field. We also assume any $k$-(co)algebra to be (co)associative, if not stated otherwise. 



\section{Generalised iterated integrals}
\label{sect:perm}

We start by studying algebraic structures underlying calculus with iterated integrals and show how they lead naturally to the notions of descent (Hopf) algebra as well as permutation Hopf algebra. We feature an elementary but powerful observation: since $n$-fold iterated integrals are over a $n$-dimensional simplex, their algebraic properties translate (exactly) into combinatorial and geometric properties of Euclidean simplices. 
With the exception of this geometric point of view originating in \cite{patring} and the resulting presentation, the material is relatively classical and to some extend ``folklore", although seemingly more in algebra and combinatorics than in integral calculus. 

The study of descent algebras was initiated to a large extend by the work of Solomon \cite{solomon}, which gave rise to the theory of non-commutative representations of Coxeter groups. Descents are studied in detail in Reutenauer's classical monograph on free Lie algebras \cite{Reutenauer} and are closely related to the fine structure of graded connected cocommutative Hopf algebras (that is, enveloping algebras of reduced graded Lie algebras) \cite{patgrad}. The Hopf algebra of descents was in fact introduced by Malvenuto and Reutenauer in the seminal paper  on the duality between descents and quasi-symmetric functions \cite{mr}, and independently in \cite{gelfand} in the context of the theory of non-commutative symmetric functions. 

The algebra structure on the linear span of permutations was presented by the second author under the name ``ring of simplices" \cite{patring}. Its Hopf algebra structure was unfolded later in \cite{mr}. It is known in the literature as permutation Hopf algebra, Malvenuto--Reutenauer Hopf algebra or Hopf algebra of free quasi-symmetric functions \cite{duc}. 

The tree-shaped expansion of a quadratic differential equation is taken essentially from reference \cite{bmp}\footnote{For a concrete application of the technique, although to a different problem than the one we use here as a motivation, we refer to the same article.}, where the technique was used in the context of time-dependent perturbation theory. The construction of rooted trees is given mainly in the present article as a simple illustration of a general process of obtaining combinatorial expansions of solutions of (nonlinear) differential equations. This process was used systematically in recent work of Martin Hairer and collaborators to find (renormalized) solutions of stochastic (partial) differential equations \cite{hairer}. 
Although simple, the example of rooted trees is interesting in that it allows to put in evidence various algebraic phenomena. It shows, for example, how integration by parts formula translates in the context of perturbative expansions.


\subsection{Permutations and simplices}
\label{ssect:permutations}

Let us write $S_n$ for the group of permutations of the set $[n]:=\{1,\dots,n\}$ and use the notation $\sigma =(\sigma(1),\ldots, \sigma(n))$ for elements $\sigma \in S_n$. Consider as a starting point the linear $N \times N$ matrix differential equation:
\begin{equation}
\label{matrixIVP}
	\dot{X}(t)=X(t)H(t),\qquad X(0)=Id,
\end{equation}
where $H(t)$ is, say, $C^\infty$ with bounded coefficients. Here, $Id$ denotes the $N \times N$ identity matrix. The solution of such a linear initial value problem is given by a series of iterated integrals, usually referred to in the mathematics literature as Picard series and in the physics literature as Dyson series or time-ordered exponential:
\allowdisplaybreaks
\begin{align}
	X(t) 	&= X_0(t)+X_1(t)+X_2(t)+\cdots+X_n(t) +\cdots \nonumber \\
		&= Id + \int\limits_{\Delta_t^1}dt_1H(t_1)
			 + \int\limits_{\Delta_t^2}dt_1dt_2H(t_1)H(t_2)
			 + \cdots +\int\limits_{\Delta_t^n}dt_1\cdots dt_nH(t_1)\cdots H(t_n)+\cdots.\label{DysonExp}
\end{align}
The integration domains in \eqref{DysonExp} are defined to be the ``standard'' $n$-simplices 
$$
	\Delta_t^n:=\{(t_1,\dots, t_n)\ |\ 0 \leq t_1 \leq \cdots \leq t_n \leq t\}.
$$ 
Given a ($n$-dimensional) convex polytope $P$ in $\RR^n$, we will write, in general, $\langle P|H \rangle$ for the integral of the product $H(t_1) \cdots H(t_n)$ over $P$, $\int \limits_Pdt_1\cdots dt_nH(t_1)\cdots H(t_n)$, so that $X_n(t)= \langle \Delta^n_t |H \rangle$. Note that with this notation 
$$
	\dot{X}_n(t)=X_{n-1}(t)H(t)= \langle \Delta^{n-1}_t | H \rangle \cdot H(t),
$$ 
whereas Fubini's rule reads 
$$
	\langle P\times Q|H \rangle = \langle P|H \rangle \langle Q | H\rangle.
$$ 
In more abstract terms, any time-dependent matrix (or more generally sufficiently regular operator) $H=H(t)$ defines a character, that is, a multiplicative map from the non-commutative algebra of linear combinations of convex Euclidean polytopes equipped with the cartesian product to the algebra of time-dependent matrices. This rule is compatible with geometric decompositions in the sense that if $P=Q\coprod T$, where $P,Q$ and $T$ are $n$-dimensional polytopes, and $Q \cap T$ is contained in a hyperplane, then $\langle P|H \rangle = \langle Q|H \rangle + \langle T|H\rangle$. These rules, together with the integration by parts formula, are enough, as we shall see, to capture the fundamental algebraic and combinatorial properties of classical calculus.

As a motivational example, consider the classical (continuous Baker--Campbell--Hausdorff) problem of expressing the solution $X(t)$ of \eqref{matrixIVP} as a true (matrix) exponential 
$$
	X(t)=\exp(\Omega (t)), \quad \Omega (0)=0.
$$ 
Its answer relies on computing the logarithm of the function $X(t)$, which amounts to calculating the logarithm of the sum of the iterated integrals $X_i(t)= \langle \Delta^i_t |H \rangle$. By Fubini's rule, the formula for  multiplying two such iterated integrals results from the simplicial decomposition of a cartesian product of simplices as a union of simplices
\allowdisplaybreaks
\begin{align*}
	\Delta_t^n \times \Delta_t^m 
	&= \{(t_1,\ldots, t_n,u_1,\ldots ,u_m)\ |\ 0\leq t_1\leq \cdots \leq t_n\leq t,\ 0 \leq u_1\leq \cdots\leq u_m\leq t\}\\
	&=\bigcup\{(y_{\sigma(1)},\ldots ,y_{\sigma(n+m)})\ |\ 0\leq y_1\leq \cdots \leq y_{m+n}\leq t\},
\end{align*}
where the union runs over permutations $\sigma\in S_{m+n}$ such that ${\sigma(1)}<\cdots < {\sigma(n)}$  and ${\sigma(n+1)}< \cdots < {\sigma(m+n)}$. This is the classical rule used, among others, in algebraic topology, to define products in simplicial and singular cohomology. Since ${\sigma(i)}<{\sigma(i+1)}$ with a possible exception for $i=n$, i.e., it may be that $\sigma(n)>\sigma(n+1)$, such a permutation is said to have at most one descent, in position $n$. This yields the following 

\begin{defn}\label{def:descent}
The descent set of a permutation $\sigma \in S_n$ is the subset of $[n-1]$ defined by:
$$
	Desc(\sigma):=\{i\ |\ \sigma (i)>\sigma(i+1),\ 1\le i <n\}.
$$
\end{defn}

\noindent For example, the descent set of the permutation $(3174526) \in S_7$ is 
$$
	Desc(3174526)=\{1,3,5\}.
$$ 
In particular, setting  
$$
	\Delta_{\sigma,t}^n:=\{(t_{\sigma(1)},\ldots, t_{\sigma(n)})\ |\ 0\leq t_1\leq\dots \leq t_{n}\leq t\}
$$
and defining for $\sigma \in S_n$
$$
	X_{\sigma}(t) := \langle \Delta_{\sigma,t}^n |H \rangle 
		      =\int\limits_{\Delta_{\sigma,t}^n}du_1\cdots du_n H(u_1)\cdots H(u_n),
$$
we obtain for the product of iterated integrals
$$
	X_i(t)X_j(t)=\sum\limits_{\sigma\in S_{i+j} \atop Desc(\sigma)\subseteq\{i\}}X_{\sigma}(t).
$$
Compare this with products of integrals following from the classical integration by parts formula
\begin{align*}
	X_1(t)X_1(t) &=\int\limits_{0\leq t_1\leq t}dt_1H(t_1)\int\limits_{0\leq t_{2}\leq t}dt_2H(t_2)\\
			   &= \iint\limits_{0\leq t_1\leq t_{2}\leq t} dt_1dt_2H(t_1)H(t_2)	+ 
			   	\iint\limits_{0\leq t_2\leq t_{1}\leq t} dt_1dt_2H(t_1)H(t_2)\\
			   &= X_{(12)}(t) + X_{(21)}(t),		
\end{align*}
with the descent in position $1$ in the second term.  Similarly,
\begin{align*}
	X_2(t)X_1(t) &=\int\limits_{0\leq t_1\leq t_{2}\leq t}dt_1dt_2H(t_1)H(t_2)\int\limits_{0\leq t_3\leq t}dt_3H(t_3)\\
			   &= \iiint\limits_{0\leq t_1\leq t_{2}\leq t_{3}\leq t} dt_1dt_2dt_3H(t_1)H(t_2)H(t_3)	+ 
			   	\iiint\limits_{0\leq t_1\leq t_{3}\leq t_{2}\leq t} dt_1dt_2dt_3H(t_1)H(t_2)H(t_3)     +\\
			   &\quad	\iiint\limits_{0\leq t_3\leq t_{1}\leq t_{2}\leq t} dt_1dt_2dt_3H(t_1)H(t_2)H(t_3)\\
			   &= X_{(123)}(t) + X_{(132)}(t) + X_{(231)}(t) ,	
\end{align*}
with the descent in position $2$ in the second and third term. 

Iterating products of iterated integrals naturally leads to computing products of non-standard simplices parametrised by permutations. Recall that if $A:=\{a_1,\ldots, a_n\}$ is a set of integers, written in the natural order: $a_1<a_2<\cdots < a_n$, the standardization map $st_{[n]}$ is the unique strictly increasing map from $A$ to $[n]=\{1,\dots,n\}$: $st_{[n]}(a_i)=i$ (one could write $st^A_{[n]}$ for $st_{[n]}$ to emphasize the dependency on $A$, but we stick to a light notation since the dependency on $A$ will always be explicited or be clear from the context further on). We write simply $st$ when no confusion can arise. We then get in general
$$
	\Delta_{\sigma,t}^n\times \Delta_{\beta,t}^m
	=\bigcup\{(y_{\gamma(1)},\dots ,y_{\gamma(n+m)})\ |\ 0\leq y_1\leq\dots \leq y_{m+n}\leq t\},
$$
where the union runs now over permutations $\gamma\in S_{n+m}$ such that $st_{[n]}(\gamma(i))=\sigma(i)$ for $i\leq n$ with $A:=\{\gamma(1),\dots,\gamma(n)\}$ and $st_{[m]}(\gamma(i))=\beta(i-n)$ for $n+1\leq i\leq n+m$ and $A:=\{\gamma(n+1),\dots,\gamma(n+m)\}$. 
Equivalently, writing $\sigma \otimes \beta$ for the permutation in $S_{n+m}$ such that $\sigma \otimes \beta(i) = \sigma(i)$ for $i \leq n$ and $\sigma \otimes \beta(i) = n+\beta(i-n)$ else, we get
$$
	\Delta_{\sigma,t}^n \times \Delta_{\beta,t}^m = \bigcup \Delta_{\nu,t}^{n+m},
$$
where $\nu$ runs over permutations of the form $\alpha\circ (\sigma\otimes\beta)$ with $Desc(\alpha)\subset \{n\}$. 

It is useful to understand these formulas geometrically. Let us denote $pr^+_n$ and $pr_m^-$ the two canonical projections from $\RR^{n+m}$ to $\RR^n$ and $\RR^m$ ($(t_1,\dots,t_{n+m})\longmapsto (t_1,\dots,t_n)$ resp.~$(t_{n+1},\dots,t_{n+m})$). Then, from the standard properties of projections of simplices, we have $pr_n^+(\Delta_{\sigma,t}^n \times \Delta_{\beta,t}^m)=\Delta_{\sigma,t}^n$, respectively~$pr_m^-(\Delta_{\sigma,t}^n \times \Delta_{\beta,t}^m)=\Delta_{\beta,t}^m$. In fact, with the same notation as above, one also has (looking for example at the boundary of $\Delta_t^\nu$)
$$
	pr_n^+(\Delta_{\nu,t}^{n+m})
	=\Delta_{\sigma,t}^n,\quad pr_m^-(\Delta_{\nu,t}^{n+m})
	=\Delta_{\beta,t}^m ;
$$
and this identity is another characterization of permutations $\nu$ of the form $\alpha \circ (\sigma\otimes\beta)$ with $Desc(\alpha)\subset \{n\}$. 

This yields the following definition \cite[Sect. 1.3]{patring}

\begin{defn}\label{def:simplexalg}
The algebra of simplices (or algebra of permutations), denoted $\mathcal S$, is the vector space $\bigoplus_{n\in\NN} \QQ[S_n]$ equipped with the product
\begin{equation}
\label{prodDesc}
	\sigma \ast \beta 
	:= \sum\limits_{\alpha \in S_{n+m} \atop Desc(\alpha)\subseteq \{n\}}\alpha\circ (\sigma\otimes\beta)
	=\sum\limits_{\gamma\in S_{n+m}\atop {st_{[n]}(\gamma(i))=\sigma(i),i\leq n\atop st_{[m]}(\gamma(i))
		=\beta(i-n),i>n}}\gamma 
	=\sum\limits_{\nu \in S_{n+m}\atop {pr_n^+(\Delta_{\nu,t}^{n+m})
		=\Delta_{\sigma,t}^n\atop pr_m^-(\Delta_{\nu,t}^{n+m})
		=\Delta_{\beta,t}^m}}\nu
\end{equation}
for $\sigma\in S_n$ and $\beta\in S_m$.
\end{defn}

Concretely, $\sigma\ast\beta$ is the sum of all permutations $\alpha$ in $S_{n+m}$ such that the elements of the sequence $(\sigma(1),\dots,\sigma(n))$ are ordered as the elements of $(\alpha(1),\dots,\alpha(n))$ (that is, 
$\alpha(i)>\alpha(j)$ if and only if $\sigma(i)>\sigma(j)$) and the elements of the sequence $(\beta(1),\dots,\beta(m))$ are ordered as the elements of $(\alpha(n+1),\dots,\alpha(n+m))$.

Notice that $\mathcal S$ is a graded algebra with unit and that the associativity of the product $\ast$ in \eqref{prodDesc} follows immediately from the associativity of cartesian products in geometry.

\ \par 
The link between the algebraic structure of iterated integrals and the one of simplices and permutations does not only hold when dealing with a single time-dependent operator. Actually, in many applications, say, for example, when expanding the solution of a differential equation such as $\dot{X}(t)=X(t)H_1(t)+H_2(t)X(t)$ (or, more generally, nonlinear differential equations involving several time dependent operators or having more complex structures -- see \cite{AG2} as well as \cite{bmp}, from which the following results are taken, for a concrete example), one has to consider iterated integrals of products of various time-dependent operators. This does not affect the algebraic rules that have just been deviced, as we show now.

Let ${\mathbf L}=(L_1,\ldots,L_n)$ be an arbitrary sequence of time-dependent operators, $L_i=L_i(t)$, $1 \le i \le n$, sufficiently regular (for example, as in \cite{bmp}, assume they are strongly continuous map from $\mathbb{R}$ into bounded self-adjoint operators on a Hilbert space $\mathcal H $). For $\sigma\in S_n$, set:
$$
	{\mathbf L}_\sigma(t):=\int\limits_{\Delta_{\sigma ,t}^n}dt_1\cdots dt_nL_1(t_1)\cdots L_n(t_n).
$$

The notation is extended to linear combinations of permutations, so that for $\mu = \sum_{\sigma\in S_n}\mu_\sigma\sigma$, ${\mathbf L}_\mu(t):=\sum_{\sigma\in S_n} \mu_\sigma{\mathbf L}_\sigma(t).$ For $\mathbf{K}=(K_1,\ldots,K_m)$ another sequence of time-dependent operators, we write $\mathbf{L}\cdot \mathbf{K}$ for the concatenation product $(L_1,\ldots,L_n,K_1,\ldots,K_m)$. Then, for $\alpha \in S_n$ and $\beta \in S_m$ two permutations it follows from the combinatorial formula for the cartesian product of simplices that

\begin{lem}
\label{productfla}
We have:
$$
	{\mathbf L}_\alpha(t){\mathbf L}_\beta(t)
	= {\mathbf L}_{\alpha\ast\beta}(t).
$$
\end{lem}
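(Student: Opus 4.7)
The plan is to reduce the lemma to a direct application of Fubini's theorem together with the combinatorial simplex decomposition that has already been set up before the statement. I will treat the lemma in the concatenation-of-sequences form: \(\mathbf L_\alpha(t)\,\mathbf K_\beta(t)=(\mathbf L\cdot\mathbf K)_{\alpha\ast\beta}(t)\), which is clearly the intended reading.

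First, I would apply Fubini to the product of integrals. Since \(\mathbf L_\alpha(t)\) is an integral in the variables \((t_1,\dots,t_n)\) ranging over \(\Delta_{\alpha,t}^n\) and \(\mathbf K_\beta(t)\) is an integral in \((u_1,\dots,u_m)\) over \(\Delta_{\beta,t}^m\), the two sets of variables are independent, so
\[
\mathbf L_\alpha(t)\,\mathbf K_\beta(t)=\int\limits_{\Delta_{\alpha,t}^n\times\Delta_{\beta,t}^m}
L_1(t_1)\cdots L_n(t_n)K_1(u_1)\cdots K_m(u_m)\,dt_1\cdots dt_n\,du_1\cdots du_m.
\]
No ordering issues arise because the operators appear in the product in the fixed order dictated by the concatenation \(\mathbf L\cdot\mathbf K\); the integrand can be rewritten, after setting \(y_i:=t_i\) for \(i\le n\) and \(y_{n+j}:=u_j\) for \(1\le j\le m\), as \(M_1(y_1)\cdots M_{n+m}(y_{n+m})\), where \(M_k\) is the \(k\)-th entry of \(\mathbf L\cdot\mathbf K\).

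Second, I would invoke the geometric decomposition already established in the paragraph preceding the lemma: viewed inside \(\RR^{n+m}\) with coordinates \((y_1,\dots,y_{n+m})\), one has (up to the measure-zero intersections on the hyperplanes \(y_i=y_j\))
\[
\Delta_{\alpha,t}^n\times\Delta_{\beta,t}^m
=\bigcup_{\nu}\Delta_{\nu,t}^{n+m},
\]
where \(\nu\) ranges over all \(\nu\in S_{n+m}\) of the form \(\gamma\circ(\alpha\otimes\beta)\) with \(\mathrm{Desc}(\gamma)\subseteq\{n\}\), equivalently those \(\nu\) satisfying \(pr_n^+(\Delta_{\nu,t}^{n+m})=\Delta_{\alpha,t}^n\) and \(pr_m^-(\Delta_{\nu,t}^{n+m})=\Delta_{\beta,t}^m\). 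By the countable additivity of the Lebesgue integral, splitting the integration domain turns the integral into a finite sum:
\[
\mathbf L_\alpha(t)\,\mathbf K_\beta(t)
=\sum_{\nu}\int\limits_{\Delta_{\nu,t}^{n+m}} M_1(y_1)\cdots M_{n+m}(y_{n+m})\,dy_1\cdots dy_{n+m}
=\sum_{\nu}(\mathbf L\cdot\mathbf K)_\nu(t).
\]
Finally, I would recognize the right-hand side as \((\mathbf L\cdot\mathbf K)_{\alpha\ast\beta}(t)\) by the first description of the product \(\ast\) in Definition~\ref{def:simplexalg}, and extend by linearity to justify the notation \(\mathbf L_\mu(t)\) on arbitrary linear combinations of permutations.

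There is no real obstacle here, since the hard combinatorial content, namely the simplicial decomposition of a Cartesian product of permutation-indexed simplices, has already been carried out. The only mild subtlety is bookkeeping: checking that relabelling \((t_1,\dots,t_n,u_1,\dots,u_m)\mapsto(y_1,\dots,y_{n+m})\) indeed matches the order of operators prescribed by the concatenation \(\mathbf L\cdot\mathbf K\), and that the overlaps in the decomposition are negligible for the integration. Both points are immediate given the non-commutative but time-parametrised nature of the operators and the regularity assumption on the \(L_i,K_j\).
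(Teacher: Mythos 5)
Your proof is correct and follows exactly the route the paper intends: the lemma is stated there as an immediate consequence of the combinatorial formula for the cartesian product of the simplices $\Delta_{\alpha,t}^n\times\Delta_{\beta,t}^m$, and you have simply written out the Fubini step, the measure-zero overlaps, and the matching with Definition~\ref{def:simplexalg} explicitly. Your reading of the statement in the concatenation form $\mathbf L_\alpha(t)\mathbf K_\beta(t)=(\mathbf L\cdot\mathbf K)_{\alpha\ast\beta}(t)$ is indeed the intended one.
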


We also mention the following lemma, which is useful in nonlinear contexts and which can be generalised to other forms of products of operator sequences:
	
\begin{lem}\label{fundam}
We have, for $\mathbf L$ and $\mathbf K$ as above and $J=J(t)$ a time-dependent operator:
$$
	\int_{0}^{t} \dd s{\mathbf L}_\alpha(s)J(s)\mathbf K_\beta(s)
	=\sum_\gamma({{\mathbf L}\cdot(J)\cdot {\mathbf K}})_\gamma(t),
$$
where $\gamma$ runs over the permutations in $S_{n+m+1}$ with $\gamma(n+1)=n+1$, $st(\gamma(1),\ldots,\gamma(n))=\alpha$, $st(\gamma(n+2),\ldots,\gamma(n+m+1))=\beta$.
\end{lem}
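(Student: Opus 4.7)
My plan is to mimic the geometric proof of Lemma \ref{productfla}, the only novelty being that one of the $n+m+1$ integration variables now plays the distinguished role of being the argument of $J$. The goal is to rewrite the LHS as a single integral over a region in $\RR^{n+m+1}$ and to decompose that region as a disjoint union, up to measure-zero boundaries, of simplices of the form $\Delta_{\gamma, t}^{n+m+1}$ indexed by permutations $\gamma$ with the stated standardisation and fixed-position properties.

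First I would apply Fubini to the nested integrals on the LHS. Writing $v_{n+1}$ for the outer time $s$, $(v_1,\dots,v_n)$ for the variables of $\mathbf{L}_\alpha$ and $(v_{n+2},\dots,v_{n+m+1})$ for the variables of $\mathbf{K}_\beta$, the integrand becomes exactly $L_1(v_1)\cdots L_n(v_n)\,J(v_{n+1})\,K_1(v_{n+2})\cdots K_m(v_{n+m+1})$, and the integration region is
$$
\mathcal{D}=\{v\in\RR^{n+m+1}\ |\ (v_1,\dots,v_n)\in\Delta_{\alpha,v_{n+1}}^n,\ (v_{n+2},\dots,v_{n+m+1})\in\Delta_{\beta,v_{n+1}}^m,\ 0\le v_{n+1}\le t\}.
$$
Geometrically, $\mathcal{D}$ is the locus on which $v_{n+1}$ is a common upper bound for the two blocks, and each block is internally ordered according to $\alpha$ or $\beta$.

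Second, I would stratify $\mathcal{D}$ by the full ordering of the tuple $(v_1,\dots,v_{n+m+1})$. Since the relative orders within the two blocks are already prescribed, each stratum corresponds to an interleaving of the two sorted sequences together with the position of the $J$-variable $v_{n+1}$ in the total order. Each such interleaving is in bijection with a permutation $\gamma\in S_{n+m+1}$ satisfying the standardisation conditions $st(\gamma(1),\dots,\gamma(n))=\alpha$ and $st(\gamma(n+2),\dots,\gamma(n+m+1))=\beta$, together with the positional constraint on $\gamma(n+1)$; and the stratum itself is precisely $\Delta_{\gamma,t}^{n+m+1}$. Reading off the integrand on each stratum yields $(\mathbf{L}\cdot(J)\cdot\mathbf{K})_\gamma(t)$, and summing over the strata gives the RHS. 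The only nontrivial step is the bookkeeping identifying the strata of $\mathcal{D}$ with the permutations $\gamma$ described in the statement; this is just the shuffle correspondence already used in Definition \ref{def:simplexalg}, adapted to the presence of a distinguished middle slot for $J$, and no additional analytic input is required beyond what was used for Lemma \ref{productfla}.
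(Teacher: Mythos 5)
Your strategy --- Fubini to collapse the nested integrals into one integral over the region $\mathcal D\subset\RR^{n+m+1}$, then decomposition of $\mathcal D$ into the simplices $\Delta^{n+m+1}_{\gamma,t}$ according to the total order of the coordinates --- is exactly what the paper has in mind (its own proof is a one-line pointer to the cartesian-product-of-simplices computation and to \cite{bmp}), and the first two steps are sound, including the observations that the strata overlap only in measure zero and that the integrand is a fixed ordered product of the operators, so additivity over the decomposition raises no non-commutativity issues.

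The gap sits in the only place where this lemma genuinely differs from Lemma \ref{productfla}: you never say what ``the positional constraint on $\gamma(n+1)$'' is, and you even phrase the stratification as if the position of $v_{n+1}$ in the total order were a free parameter of the interleaving. It is not: on $\mathcal D$ every $v_i$ with $i\neq n+1$ is bounded above by $v_{n+1}$, so $v_{n+1}$ is the maximum on every stratum; with the paper's convention that a point of $\Delta^{N}_{\gamma,t}$ has its $i$-th coordinate in rank $\gamma(i)$, this forces $\gamma(n+1)=n+m+1$, equivalently $\gamma^{-1}(n+m+1)=n+1$ --- not $\gamma(n+1)=n+1$ as printed. (Test with $n=m=1$, $\alpha=\beta=(1)$: the left-hand side is the integral over $\{u\le s,\ v\le s\le t\}$, whose two strata are $\Delta^3_{(1,3,2),t}$ and $\Delta^3_{(2,3,1),t}$, both with $\gamma(2)=3$.) You should carry out this identification explicitly: it is the one piece of bookkeeping that goes beyond Lemma \ref{productfla}, and doing it both closes the gap and shows that the stated condition must be read as ``position $n+1$ carries the maximal value''.
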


The proof follows from the same arguments that lead to the combinatorial expression for a cartesian product of simplices, we refer to \cite{bmp} for details.


\subsection{Descents, NCSF and the BCH formula}
\label{ssect:descents}

In many applications in integral calculus, only iterated products of standard simplices $\Delta_t^n$ appear. It is therefore not necessary to appeal to the full algebra $\mathcal S$, but only to the smaller algebra generated by the identity elements in the various $S_n$. This algebra is known as the \it descent algebra \rm and plays a key role in the theory of free Lie algebras \cite{Reutenauer} as well as in the structure theory of Hopf algebras \cite{patgrad}.

\begin{defn}
The descent algebra $\mathcal{D}$ is the graded unital subalgebra of $\mathcal S$ generated by the identity permutations $1_n:=(1, \ldots, n)\in S_n$, $n\geq 1$.
\end{defn}

Notice that, geometrically, elements of $\mathcal{D}$ are therefore linear combinations of cartesian products of standard simplices; the associated integrals are products of iterated integrals over standard simplices.

Generalizing what happens when considering the cartesian product of two standard simplices, to each subset $A$ of the set $[n-1]$ are associated two elements in $\QQ [S_n]$. They are referred to as ``Solomon elements'' since they appear as a key ingredient in Solomon's theory of non-commutative representations of the symmetric groups \cite{solomon}:

\begin{defn} 
For $A \subset [n-1]$, the Solomon elements $D_{=A}$ and $D_{A}$ in $\QQ[S_n]$ are defined by:
$$
	D_{=A}:=\sum\limits_{\sigma\in S_n \atop Desc(\sigma)=A}\sigma,
	\qquad 
	D_{A}:=\sum\limits_{\sigma\in S_n \atop Desc(\sigma)\subseteq A}\sigma.
$$
To avoid ambiguities, elements will be written $D_{=A}^{(n)}$, respectively $D_{A}^{(n)}$, when their belonging to $\QQ[S_n]$ has to be emphasised.
\end{defn}

From the definition of the product \eqref{prodDesc} in $\mathcal S=\bigoplus_{n\in\NN} \QQ[S_n]$, the permutations that appear in the expansion of $D_{i_1,i_1+i_2,\ldots,i_1+\dots+i_{n-1}}^{(i_1+\cdots+i_n)}$ parametrise the decomposition into a union of simplices of the product of the simplices $\Delta_t^{i_1},\ldots,\Delta_t^{i_n}$. In particular their sum belong to $\mathcal D_n$, and we have

\begin{lem}\label{pdtfla}
In the descent algebra we have
$$
	1_{i_1}\ast \cdots \ast 1_{i_n} = D_{i_1,i_1+i_2,\ldots,i_1 + \cdots + i_{n-1}}^{(i_1 + \cdots + i_n)}
$$
and
$$
	D_S^{(n)}\ast D_T^{(m)}=D_{S\cup\{n\}\cup (T+n)}^{(n+m)}.
$$
\end{lem}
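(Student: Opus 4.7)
The plan is to establish formula (2) first and deduce (1) as a short corollary. Formula (2) is a purely combinatorial statement about descent sets that follows from a careful unfolding of the definition of $\ast$; formula (1) then drops out by induction once one observes that $1_k = D_\emptyset^{(k)}$ (the identity permutation is the unique permutation of $S_k$ with empty descent set).

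For formula (2), I would work with the middle expression in \eqref{prodDesc}, which describes $\sigma \ast \beta$ as the sum of all $\gamma \in S_{n+m}$ with $st(\gamma(1),\ldots,\gamma(n)) = \sigma$ and $st(\gamma(n+1),\ldots,\gamma(n+m)) = \beta$. Expanding
\[
	D_S^{(n)} \ast D_T^{(m)} = \sum_{Desc(\sigma)\subseteq S,\ Desc(\beta)\subseteq T} \sigma \ast \beta,
\]
each $\gamma \in S_{n+m}$ appears at most once in the total sum, attached to the unique pair $(\sigma,\beta)$ obtained by standardising its two halves. The key observation is that standardisation preserves relative orderings: for $i \in [n-1]$, $\sigma(i) < \sigma(i+1)$ if and only if $\gamma(i) < \gamma(i+1)$, and analogously for $\beta$ on the last $m$ entries. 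Hence
\[
	Desc(\sigma) = Desc(\gamma) \cap [n-1], \qquad Desc(\beta) + n = Desc(\gamma) \cap \{n+1,\ldots,n+m-1\},
\]
while whether or not $n \in Desc(\gamma)$ is left completely free by $\sigma$ and $\beta$ (it depends on the interleaving, not on the standardisations). Consequently $\gamma$ contributes to $D_S^{(n)} \ast D_T^{(m)}$ exactly when $Desc(\gamma)\cap[n-1]\subseteq S$ and $Desc(\gamma)\cap\{n+1,\ldots,n+m-1\}\subseteq T+n$, equivalently when $Desc(\gamma) \subseteq S \cup \{n\} \cup (T+n)$. The sum is therefore $D_{S\cup\{n\}\cup(T+n)}^{(n+m)}$.

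Formula (1) then follows by induction on $n$. The case $n=1$ is trivial since $1_{i_1} = D_\emptyset^{(i_1)}$. For the inductive step, assume the formula holds for $n-1$ factors and apply formula (2) to
\[
	\bigl(1_{i_1}\ast\cdots\ast 1_{i_{n-1}}\bigr)\ast 1_{i_n} = D_{\{i_1,\ldots,i_1+\cdots+i_{n-2}\}}^{(i_1+\cdots+i_{n-1})} \ast D_\emptyset^{(i_n)},
\]
which, with $S = \{i_1,\ldots,i_1+\cdots+i_{n-2}\}$, $T=\emptyset$, and the evident sizes, yields $D_{\{i_1,\ldots,i_1+\cdots+i_{n-1}\}}^{(i_1+\cdots+i_n)}$ as required.

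The argument is almost entirely bookkeeping; the only non-routine step is the standardisation-preserves-order observation, which cleanly partitions $Desc(\gamma)$ into the three disjoint pieces lying in $[n-1]$, at $\{n\}$, and in $\{n+1,\ldots,n+m-1\}$, with total freedom at the junction point $n$. Geometrically, this is nothing but the fact that the simplicial decomposition of $\Delta_t^{i_1}\times\cdots\times\Delta_t^{i_n}$ produces exactly those simplices $\Delta_{\nu,t}^{i_1+\cdots+i_n}$ whose descent positions are confined to $\{i_1, i_1+i_2,\ldots, i_1+\cdots+i_{n-1}\}$.
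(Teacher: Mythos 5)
Your proof is correct and follows essentially the route the paper intends: the lemma is stated there without a written proof, the preceding discussion merely asserting it as an unfolding of the standardisation form of the product \eqref{prodDesc} (equivalently, of the simplicial decomposition of $\Delta_t^{i_1}\times\cdots\times\Delta_t^{i_n}$). You simply supply the bookkeeping the paper omits -- the observation that $Desc(\gamma)$ splits into $Desc(\sigma)$, the free junction point $n$, and $Desc(\beta)+n$ -- and your ordering (prove the second identity first, deduce the first by induction from $1_k=D^{(k)}_{\emptyset}$) is a clean way to organise it.
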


Since the (non-empty) sets $\{\sigma\in S_n \ |\  Desc(\sigma)=A\}$, where $A$ runs over subsets of $[n-1]$, define a partition of $S_n$, the elements $D_{=A}$ are nonzero and linearly independent in $\QQ [S_n]$. By  M\"obius inversion (in the poset of subsets of $[n-1]$), the $D_A$ are also linearly independent. Moreover, since the M\"obius inversion coefficients of the poset of subsets of $[n-1]$ are $(-1)^{|A|-|B|}$ for $B\subset A$, we have

\begin{prop}\label{moebius}
The two families $D_{=A}$ and $D_{A}$ are two linear basis of ${\mathcal D}_n$ related by the formulas:
$$
	D_A=\sum\limits_{B\subseteq A}D_{=B},
	\qquad 
	D_{=A}=\sum\limits_{B\subseteq A}(-1)^{|A|-|B|}D_{B}.
$$
\end{prop}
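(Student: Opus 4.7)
The plan is to establish the first identity directly from the definitions, deduce the second by Möbius inversion on the Boolean lattice, and finally conclude that both families are bases of $\mathcal{D}_n$. All ingredients are essentially spelled out in the paragraph preceding the statement, so there is no substantial obstacle; the proof is a matter of organising the bookkeeping.

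First I would observe that every $\sigma\in S_n$ has a uniquely determined descent set $Desc(\sigma)\subseteq[n-1]$, so the $2^{n-1}$ sets $\{\sigma\in S_n : Desc(\sigma)=B\}$ (indexed by $B\subseteq[n-1]$) partition $S_n$. Each of these sets is nonempty (for any $B$ one constructs a representative by inserting descents at the prescribed positions), so the $D_{=B}$ are nonzero and linearly independent in $\QQ[S_n]$. The first identity is then immediate by refining the condition $Desc(\sigma)\subseteq A$ according to the actual value of $Desc(\sigma)$:
$$D_A = \sum_{\sigma \in S_n,\, Desc(\sigma)\subseteq A}\sigma = \sum_{B\subseteq A}\ \sum_{\sigma\in S_n,\, Desc(\sigma)=B}\sigma = \sum_{B\subseteq A}D_{=B}.$$

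For the second identity I would apply Möbius inversion on the Boolean poset $2^{[n-1]}$, whose Möbius function is $\mu(B,A)=(-1)^{|A|-|B|}$ for $B\subseteq A$. Concretely, substituting the first identity into the proposed right-hand side and swapping the order of summation yields
$$\sum_{B\subseteq A}(-1)^{|A|-|B|}D_B = \sum_{C\subseteq A} D_{=C}\sum_{C\subseteq B\subseteq A}(-1)^{|A|-|B|} = D_{=A},$$
since the inner alternating sum over the interval $[C,A]$ in $2^{[n-1]}$ equals $\delta_{A,C}$.

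To finish, I would combine Lemma \ref{pdtfla} with the Möbius inversion just established. Lemma \ref{pdtfla} identifies every product $1_{i_1}\ast\cdots\ast 1_{i_k}$ with exactly one $D_A$, the map from compositions of $n$ to subsets of $[n-1]$ being the usual bijection $(i_1,\ldots,i_k)\mapsto\{i_1,i_1+i_2,\ldots,i_1+\cdots+i_{k-1}\}$. Hence the $2^{n-1}$ elements $D_A$ span $\mathcal{D}_n$, and by the Möbius formula each $D_{=A}$ also lies in $\mathcal{D}_n$. Since the $D_{=A}$ are linearly independent in $\QQ[S_n]$ and the change of basis between $\{D_A\}$ and $\{D_{=A}\}$ is triangular (with $\pm 1$ diagonal) with respect to any linear extension of inclusion on $2^{[n-1]}$, the $D_A$ are linearly independent as well. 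Both families therefore form bases of $\mathcal{D}_n$. The only point that deserves a moment's care is the identification of $\mathcal{D}_n$, defined abstractly as the subalgebra generated by the $1_k$, with the span of the $D_A$; this is exactly what Lemma \ref{pdtfla} provides.
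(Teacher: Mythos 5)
Your proof is correct and follows essentially the same route as the paper: the descent sets partition $S_n$, giving the first identity and the linear independence of the $D_{=A}$, and Möbius inversion on the Boolean lattice yields the second identity and the independence of the $D_A$. The only addition is your explicit appeal to Lemma \ref{pdtfla} to identify $\mathcal{D}_n$ with the span of the $D_A$, which the paper leaves implicit in the surrounding discussion.
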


From Proposition \ref{moebius} and Lemma \ref{pdtfla} we get

\begin{cor}\label{cor:free}
The descent algebra is a free associative algebra with unit generated by the $1_n$, $n\geq 1$.
\end{cor}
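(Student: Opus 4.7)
The plan is to exhibit an explicit basis of $\mathcal{D}$ indexed by words in the generators $1_n$, and then argue that this basis bijection forces freeness. The key observation is that subsets of $[n-1]$ are in bijection with compositions $(i_1,\dots,i_k)$ of $n$ (with $k\geq 1$), by sending a composition to the set of its partial sums $\{i_1,\, i_1+i_2,\, \ldots,\, i_1+\cdots+i_{k-1}\}$; the empty subset corresponds to the trivial composition $(n)$.

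First, I would combine Proposition~\ref{moebius} with the above bijection to note that the family $\{D^{(n)}_A \mid A\subseteq [n-1]\}$ is a basis of $\mathcal{D}_n$, and therefore $\bigsqcup_{n\geq 0}\{D^{(n)}_A\mid A\subseteq [n-1]\}$ is a basis of $\mathcal{D}$ (with the empty product giving the unit in degree $0$). Next, by iterating the second formula of Lemma~\ref{pdtfla}, for any composition $(i_1,\dots,i_k)$ of $n$ one has
$$
1_{i_1} \ast 1_{i_2} \ast \cdots \ast 1_{i_k}
\;=\; D^{(n)}_{\{i_1,\,i_1+i_2,\,\ldots,\,i_1+\cdots+i_{k-1}\}}.
$$
Under the bijection above, the right-hand sides exhaust the basis $\{D^{(n)}_A\}_{A\subseteq[n-1]}$ exactly once as $(i_1,\dots,i_k)$ ranges over all compositions and $n$ ranges over $\NN$.

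Finally, consider the unique unital algebra morphism $\varphi\colon T\bigl(\bigoplus_{n\geq 1}\QQ\cdot x_n\bigr)\longrightarrow \mathcal{D}$ from the free associative algebra on generators $x_n$ sending $x_n\mapsto 1_n$. The tensor algebra has as a linear basis the monomials $x_{i_1}\cdots x_{i_k}$ indexed by finite sequences $(i_1,\dots,i_k)$ of positive integers (together with the empty word). By the previous paragraph, $\varphi$ sends this basis bijectively onto the basis $\{D^{(n)}_A\}$ of $\mathcal{D}$, so $\varphi$ is a linear isomorphism, hence an isomorphism of algebras. This is exactly the statement of freeness.

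No real obstacle is expected here: the content of the proof lies in the already-established Lemma~\ref{pdtfla} and Proposition~\ref{moebius}; the only thing to be careful about is matching the combinatorial indexing (subsets of $[n-1]$ versus compositions of $n$) so that the generator-word basis coincides with the $D_A$-basis. One could equivalently phrase the argument by comparing graded dimensions ($\dim \mathcal{D}_n = 2^{n-1}$ for $n\geq 1$ from Proposition~\ref{moebius}, which matches the number of compositions of $n$), and observing that the generators-word map is surjective by Lemma~\ref{pdtfla}, hence bijective.
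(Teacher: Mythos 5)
Your argument is correct and is exactly the one the paper intends: the paper's ``proof'' is the single sentence that the corollary follows from Proposition~\ref{moebius} and Lemma~\ref{pdtfla}, and you have simply spelled out the details (words in the $1_n$ correspond to compositions, hence via Lemma~\ref{pdtfla} to the basis $\{D_A^{(n)}\}$ of Proposition~\ref{moebius}, so the canonical map from the tensor algebra is a linear, hence algebra, isomorphism). Nothing to object to.
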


\begin{rmk}
In the context of the category of graded connected cocommutative Hopf algebras, a notion of descent algebra is defined as an algebra of natural transformations of the forgetful functor to graded vector spaces. It is generated by the projections from such a Hopf algebra to its graded components. The product is induced by the convolution product of linear endomorphisms. The notion of descent algebra defined in that way is isomorphic to $\mathcal D$ (but the interpretation of its elements as linear combinations of permutations is not availble since there is in general no canonical action of symmetric groups on a graded connected cocommutative Hopf algebra) \cite{patgrad}. As a free associative algebra over a set of graded generators, the descent algebra is canonically isomorphic to the algebra of non-commutative symmetric functions {\rm{(NCSF)}} \cite{gelfand}. These two points of view have proven to be very useful to investigate the properties and applications of $\mathcal D$.
\end{rmk}

A classical application of the rules governing the descent algebra is the explicit description of the so-called continuous Baker--Campbell--Hausdorff (BCH) formula:

\begin{thm}\label{thm:bch}
For the series \eqref{DysonExp} we have, using the notation $X_{\sigma}(t)=:X_n(t)\cdot \sigma$:
\begin{align}
	\Omega (t) := \log(X(t)) 
		&=\sum\limits_{n=1}^\infty X_n(t)\cdot \Big(\sum\limits_{S\subset [n-1]} 
			\frac{(-1)^{|S|}}{|S|+1} D_S^{(n)} \Big) \nonumber \\ 
		&=\sum\limits_{n=1}^\infty X_n(t)\cdot \Big(\sum\limits_{S\subset [n-1]} 
			\frac{(-1)^{|S|}}{n}{{n-1}\choose{|S|}}^{-1} D_{=S}^{(n)}\Big). \label{BCH}
\end{align}
\end{thm}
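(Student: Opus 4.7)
The plan is to compute $\log(X(t))$ directly from its power series, using the simplicial product formula of Lemma \ref{pdtfla} to rewrite every product of iterated integrals in terms of Solomon elements, and then to reorganise the resulting double sum by grouping compositions of $n$ according to the subset of $[n-1]$ they encode.

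\smallskip

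\textbf{Step 1 (logarithmic expansion).} Write $X(t)=Id+Y(t)$ with $Y(t)=\sum_{n\geq 1}X_n(t)$ and expand
\[
\Omega(t)=\log(Id+Y(t))=\sum_{k\geq 1}\frac{(-1)^{k-1}}{k}Y(t)^k
=\sum_{k\geq 1}\frac{(-1)^{k-1}}{k}\sum_{i_1,\dots,i_k\geq 1}X_{i_1}(t)\cdots X_{i_k}(t).
\]
Since every $X_i(t)=X_i(t)\cdot 1_i$ lies in the descent algebra image, each product on the right belongs to the descent algebra image as well.

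\smallskip

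\textbf{Step 2 (Solomon rewriting).} By Lemma \ref{pdtfla}, iterated in $k-1$ steps,
\[
X_{i_1}(t)\cdots X_{i_k}(t)=X_n(t)\cdot(1_{i_1}\ast\cdots\ast 1_{i_k})
=X_n(t)\cdot D^{(n)}_{\{i_1,\,i_1+i_2,\ldots,\,i_1+\cdots+i_{k-1}\}},
\]
where $n=i_1+\cdots+i_k$. Compositions $(i_1,\dots,i_k)$ of $n$ with $k$ parts are in bijection with subsets $S\subseteq[n-1]$ of cardinality $k-1$ via the partial-sum map; hence regrouping by $n$ and by $S$ gives
\[
\Omega(t)=\sum_{n\geq 1}X_n(t)\cdot\Bigl(\sum_{S\subseteq[n-1]}\frac{(-1)^{|S|}}{|S|+1}D^{(n)}_S\Bigr),
\]
which is the first formula in \eqref{BCH}.

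\smallskip

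\textbf{Step 3 (Möbius inversion and beta identity).} For the second equality, substitute $D^{(n)}_S=\sum_{B\subseteq S}D^{(n)}_{=B}$ from Proposition \ref{moebius}, exchange summations, and collect the coefficient of each $D^{(n)}_{=B}$ with $b:=|B|$:
\[
\sum_{S\supseteq B}\frac{(-1)^{|S|}}{|S|+1}
=(-1)^{b}\sum_{j=0}^{n-1-b}\binom{n-1-b}{j}\frac{(-1)^{j}}{b+j+1}.
\]
The inner sum is the classical beta integral
\[
\sum_{j=0}^{m}\binom{m}{j}\frac{(-1)^{j}}{b+j+1}=\int_{0}^{1}x^{b}(1-x)^{m}\,dx=\frac{b!\,m!}{(b+m+1)!},
\]
applied with $m=n-1-b$, giving $\dfrac{b!(n-1-b)!}{n!}=\dfrac{1}{n}\binom{n-1}{b}^{-1}$. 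This yields the second form of \eqref{BCH}.

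\smallskip

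The only non-routine ingredient is the combinatorial identity of Step 3; once the beta-integral computation is in place, the rest is a transparent reorganisation of the logarithmic series, and the whole argument reduces to the fact that the multiplication of standard simplices in $\mathcal S$ is encoded by the Solomon elements, which is exactly what Lemma \ref{pdtfla} provides.
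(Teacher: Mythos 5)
Your proof is correct and follows essentially the same route as the paper: expand the logarithm, identify each product $X_{i_1}(t)\cdots X_{i_k}(t)$ with a Solomon element $D_S^{(n)}$ via the simplicial product formula, and pass to the $D_{=S}$ basis by the beta-integral evaluation of $\sum_{S\supseteq B}\frac{(-1)^{|S|}}{|S|+1}$. You merely spell out in Steps 1--2 what the paper dismisses as ``immediate,'' and your Step 3 computation coincides with the paper's (which, incidentally, carries a harmless sign typo in its penultimate line that your version avoids).
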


\begin{proof}
Recall that $X_n(t) \cdot \sigma := X_\sigma(t)$. The first identities follow immediately from the product formulas for standard simplices and iterated integrals. Let us expand the $D_S$ in terms of the $D_{=S}$. The coefficient of $D_{=T},T\subset [n-1],|T|=l$, in this expansion is given by:
\begin{align*}
	\sum\limits_{T \subset S\subset [n-1]}\frac{(-1)^{|S|}}{|S|+1}
	&=\sum\limits_{j=0}^{n-l-1}{n-l-1\choose j}\frac{(-1)^{l+j}}{l+j+1}\\
	&=(-1)^{l}\sum\limits_{j=0}^{n-l-1}\int_0^1(-1)^j{n-l-1\choose j}x^{l+j}dx\\
	&=(-1)^{l-1}\int_0^1(1-x)^{n-l-1}  x^{l}dx=(-1)^{l}\frac{1}{n}{n-1\choose l}^{-1}.
\end{align*}
From this the formula follows.
\end{proof}

\begin{rmk}\label{rmk:Casas}
The series $\Omega(t)$ is a Lie series \cite[Chap. 3]{Reutenauer}. Concretely, this property means that $\Omega(t)$ can be expressed as a weighted sum of iterated integrals of nested commutators
$$ 
	[X]_\sigma(t):=X_n(t)\cdot [\sigma(1),[\sigma(2),[\cdots [\sigma(n-1), \sigma(n)],\cdots ]]].
$$ 
Here, we identify a permutation $\beta\in S_n$ with the word $\beta(1)\cdots \beta(n)$ and the bracket of two words is defined  by $[w,w']:=ww'-w'w$, so that, for example, for $\sigma=(213)$, $[2,[1,3]]=[2,13-31]=213-231-132+321$.
Note that in the latter expansion there is a unique word ending with $3=\sigma(3)$. This property holds in general and is straightforward to see. Indeed, when $[\sigma(1),[\sigma(2),[\cdots [\sigma(n-1), \sigma(n)],\cdots ]]]$ is expanded as a sum of words, the word $\sigma(1)\cdots\sigma(n)$ is the only one ending with $\sigma(n)$ that appears. 
This observation has two classical consequences \cite[Sect. 5.6.2]{Reutenauer}: 
\begin{itemize}

\item the brackets $[\sigma(1),[\sigma(2),[\cdots [\sigma(n-1),n],\cdots ]]]$, where $\sigma\in S_{n-1}$, are linearly independent in $\QQ[S_n]$ 

\item since the space $P_n$ of multilinear Lie polynomials (that is, the vector space generated by the iterated brackets $[\sigma(1),[\sigma(2),[\cdots [\sigma(n-1), \sigma(n)],\cdots ]]]$) is of dimension $(n-1)!$, the brackets $[\sigma(1),[\sigma(2),[\cdots [\sigma(n-1),n],\cdots ]]]$, $\sigma\in S_{n-1}$ form a basis of $P_n$. For any element $x$ of $P_n$, $x= \sum\limits_{\sigma\in S_n} x_\sigma \cdot \sigma$, the expansion of $x$ in this basis is then 
$$
	x=\sum\limits_{\sigma\in S_{n-1}}x_{\sigma^+}[\sigma(1),[\sigma(2),[\cdots [\sigma(n-1),n],\cdots ]]],
$$
where $\sigma^+=(\sigma(1),\dots,\sigma(n-1),n)$.
\end{itemize}
Similar results hold by using left-to-right iterated brackets or fixing as last letter another letter (for example $1$ instead of $n$). This result has an interesting consequence as it implies \cite{Arnal} 
\end{rmk}

\begin{cor}
The series $\Omega(t)$ rewrites 
$$
	\Omega (t)  
	=\sum\limits_{n=1}^\infty \sum\limits_{\sigma\in S_{n-1}} 
	\frac{(-1)^{|Desc(\sigma)|}}{n}{{n-1}\choose{|Desc(\sigma)|}}^{-1}[X]_{\sigma^+}.
$$
\end{cor}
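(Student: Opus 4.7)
The plan is to extract the corollary directly from Theorem \ref{thm:bch} together with the reconstruction formula stated at the end of Remark \ref{rmk:Casas}.

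First, I would recall from Theorem \ref{thm:bch} that the degree-$n$ component of $\Omega(t)$ is
$$
\Omega_n(t) = X_n(t)\cdot\Big(\sum_{S\subset [n-1]}\frac{(-1)^{|S|}}{n}\binom{n-1}{|S|}^{-1}D_{=S}^{(n)}\Big).
$$
Writing $\Omega_n(t) = X_n(t)\cdot x^{(n)}$, Remark \ref{rmk:Casas} asserts that $x^{(n)}\in P_n$ (this is the Lie-series property, for which we take the cited reference for granted), and that any $x\in P_n$, decomposed in the permutation basis as $x=\sum_{\sigma\in S_n} x_\sigma\cdot\sigma$, reconstructs in the Lie basis as
$$
x = \sum_{\sigma\in S_{n-1}} x_{\sigma^+}\,[\sigma(1),[\sigma(2),[\cdots [\sigma(n-1),n],\cdots]]],
$$
where $\sigma^+ = (\sigma(1),\ldots,\sigma(n-1),n)$. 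So the task is to identify $x^{(n)}_{\sigma^+}$.

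The key observation is that for $\sigma\in S_{n-1}$, the descent set of $\sigma^+$ in $S_n$ coincides with the descent set of $\sigma$ in $S_{n-1}$: appending the letter $n$ at the end introduces no descent at position $n-1$ because $\sigma(n-1)\le n-1 < n$, and the descent positions $1,\ldots,n-2$ are unaffected. Consequently the coefficient of $\sigma^+$ in $D_{=S}^{(n)}$ is $1$ when $S = Desc(\sigma)$ and $0$ otherwise. Reading off the coefficient of $\sigma^+$ in $x^{(n)}$ yields
$$
x^{(n)}_{\sigma^+} = \frac{(-1)^{|Desc(\sigma)|}}{n}\binom{n-1}{|Desc(\sigma)|}^{-1}.
$$

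Substituting into the reconstruction formula and recalling the notation $[X]_{\sigma^+}(t) = X_n(t)\cdot[\sigma(1),[\sigma(2),[\cdots[\sigma(n-1),n],\cdots]]]$, we obtain
$$
\Omega_n(t) = \sum_{\sigma\in S_{n-1}}\frac{(-1)^{|Desc(\sigma)|}}{n}\binom{n-1}{|Desc(\sigma)|}^{-1}[X]_{\sigma^+}(t),
$$
and summing over $n\ge 1$ gives the claimed identity. There is no real obstacle here: all the nontrivial work has been done in Theorem \ref{thm:bch} and in the observation (recorded in Remark \ref{rmk:Casas}) that the brackets $[\sigma(1),[\cdots,[\sigma(n-1),n],\cdots]]$ for $\sigma\in S_{n-1}$ form a basis of $P_n$ with an explicit ``read off the word ending in $n$'' coordinate formula; the only content beyond this is the harmless remark $Desc(\sigma^+) = Desc(\sigma)$.
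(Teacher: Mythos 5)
Your proposal is correct and follows exactly the route the paper intends: the corollary is obtained by combining the $D_{=S}$ form of Theorem \ref{thm:bch} with the Lie-series property and the ``read off the coefficient of the word ending in $n$'' reconstruction formula recorded in Remark \ref{rmk:Casas}, the only additional observation being $Desc(\sigma^+)=Desc(\sigma)$, which you verify. The paper gives no separate proof beyond this remark, so there is nothing to add.
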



\subsection{Rooted trees and nonlinear differential equations}
\label{sect-tree}

We have seen in the previous sections how to handle algebraically products of iterated integrals parametrised by permutations. In various situations, such as when dealing with nonlinear differential equations, it is useful to consider more general iterated integrals parametrised by rooted trees. Here, we'll treat the example of planar binary trees, following reference \cite{bmp}, whose developments were motivated by time-dependent perturbation theory. As a motivation, the reader may consider the perturbative expansion of the solution of the quadratic equation $\dot{X}(t)=X(t)H(t)X(t)$. The equations governing the Hamiltonians and effective Hamiltonians of time-dependent perturbation theory are more complex, but obey the same combinatorics. The following constructions can be generalised to other nonlinear equations, our presentation should be enough to aim at the general principle.

\smallskip 

Consider again the time-dependent operator $H(t)$ and, for example, the iterated integral
$$
	\int\limits_{0\leq t_1,t_3\leq t_2 \leq t}dt_1dt_2dt_3H(t_1)H(t_2)H(t_3).
$$
The integration domain $\{(t_1,t_2,t_3)\ |\ 0\leq t_1,t_3\leq t_2\leq t\}$ can be parametrised by the poset structure on the set $\{1,2,3\}$, defined by $2 << 1, \ 2<< 3$ (in general, the rule to define the poset structure from the iterated integral is $t_i\leq t_j\implies j<<i$). The associated Hasse diagram is a rooted tree (a connected and simply connected graph with a distinguished vertex called the root, the graph is directed according to the ordering) with $2$ as a root and two branches relating $2$ and $1$ and $2$ and $3$. The domain splits, moreover, as the union of two simplices: $\{(t_1,t_2,t_3)\ |\ 0\leq t_1\leq t_3\leq t_2\leq t\}=\{(t_1,t_3,t_2)\ | \ 0\leq t_1\leq t_2\leq t_3\leq t\}$ and $\{(t_1,t_2,t_3)\ | \ 0\leq t_3\leq t_1\leq t_2\leq t\}=\{(t_2,t_3,t_1)\ |\ 0\leq t_1\leq t_2\leq t_3\leq t\}$.

Let us generalise this construction. Consider a planar binary tree $T$ with $n$ vertices. Recall that planar means that the tree is drawn in the plane with the root at the bottom and that at each vertex the outgoing branches are ordered -- say, from left to right. Planar binary means that each vertex has two outgoing branches. At internal vertices $v$, one of these branches may be empty, by what we mean that it does not link $v$ to another vertex. At leaf vertices (top vertices), the two branches are empty.
A planar binary tree can be represented uniquely as the grafting of two planar binary trees, $T=T_1 \vee T_2$, where $T_1$ and $T_2$ are the branches attached to the root. Recall that empty branches are allowed, but we distinguish the case where the left or the right branche is empty (so that an outgoing branch is always pointing to the left or the right). Notationally, we distinguish
between $T_1 \vee \emptyset$ and $\emptyset \vee T_1$. The tree is then labelled from left to right and top to bottom by elements from the set $\{1, \ldots ,n\}$. When performed inductively, the labelling of a tree $T_1 \vee T_2$ with $n=k+1+l$ vertices ($T_1$ and $T_2$ having $k$ respectively $l$ vertices) by a subset $S$ of the integers of cardinality $n$ is performed as follows. The tree $T_1$ is labelled by the first $k$ elements in $S$, the root is labelled by the $k+1$-th element of $S$, the tree $T_2$ is labeled by the remaining $l$ elements.

Using such a labelling of the vertices, we can view the tree $T$ as the Hasse diagram of a (well-defined) order on $[n]$, denoted $<_T$. We write $\Delta_T$ for the associated subset of the cube $[0,t]^n\subset \RR^n$ defined by the inequalities $t_i\leq t_j\Longleftrightarrow j\leq_Ti$. We then set
$$
	X_T(t)= \langle \Delta_T|H \rangle 
	=\int_{\Delta_T}dt_1\cdots dt_nH(t_1)\cdots H(t_n).
$$
The notation $X_T(t)$ is extended to linear combinations of trees, so that, for example, if $Z=T+2T'$, where $T$ and $T'$ are two arbitrary trees, then $X_Z(t):=X_T(t)+2X_{T'}(t)$.

As the root of the tree is associated to the highest coordinate of the points in $\Delta_T$ and since the labels of $T_1$ and $T_2$ are incomparable for $<_T$, we get

\begin{lem}
For $T=T_1\vee T_2$ we have that
$$
	X_T(t)=\int_0^tds X_{T_1}(s)H(s)X_{T_2}(s).
$$ 
or
$$
	\dot{X}_T(t)=X_{T_1}(t)H(t)X_{T_2}(t).
$$
\end{lem}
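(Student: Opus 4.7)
The plan is to exploit the recursive labelling rule together with the characterisation of the domain $\Delta_T$ as the preimage of the order $<_T$ under the coordinate map. With $T = T_1 \vee T_2$, $|T_1|=k$, $|T_2|=l$, and $n=k+1+l$, the labelling convention assigns $\{1,\dots,k\}$ to the vertices of $T_1$, $k+1$ to the root, and $\{k+2,\dots,n\}$ to the vertices of $T_2$. The first thing I would verify is that under $<_T$ the root is the unique minimum: every other vertex lies above the root in the Hasse diagram, and by the rule $j \leq_T i \Leftrightarrow t_i \leq t_j$, this means $t_i \leq t_{k+1}$ for all $i \neq k+1$, i.e., the root's coordinate is the largest.

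Next I would use that the labels of $T_1$ and the labels of $T_2$ lie in incomparable branches with respect to $<_T$, so $\Delta_T$ factorises after conditioning on the root coordinate. Concretely, setting $s := t_{k+1}$, the induced orders on $\{1,\dots,k\}$ and $\{k+2,\dots,n\}$ are exactly $<_{T_1}$ and $<_{T_2}$, so the slice of $\Delta_T$ at $t_{k+1} = s$ is the Cartesian product $\Delta_{T_1}^{(s)} \times \Delta_{T_2}^{(s)}$, where the superscript $(s)$ indicates that the upper bound $t$ in the definition is replaced by $s$.

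The computation then proceeds by Fubini: fixing $s$, the integrand $H(t_1)\cdots H(t_n)$ factors as the three non-overlapping blocks $\bigl(H(t_1)\cdots H(t_k)\bigr)\, H(s)\, \bigl(H(t_{k+2})\cdots H(t_n)\bigr)$, in this order, because the labels $1,\dots,k$ precede $k+1$ which precedes $k+2,\dots,n$, and the product respects this ordering of indices. A harmless renaming of the dummy variables on the right block identifies the two bracketed integrals with $X_{T_1}(s)$ and $X_{T_2}(s)$ respectively (these depend only on the isomorphism class of the planar binary tree, not on which concrete integer labels are used), yielding
\[
X_T(t) \;=\; \int_0^t ds \, X_{T_1}(s)\, H(s)\, X_{T_2}(s).
\]
The differentiated form is then just the fundamental theorem of calculus.

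The only genuinely non-routine step is the factorisation of the domain: one has to check that the recursive labelling really does make $\{1,\dots,k\}$ and $\{k+2,\dots,n\}$ $<_T$-incomparable and the root $<_T$-minimal. This is immediate from the construction of the Hasse diagram from $T$ (branches out of the root are incomparable subtrees), so no genuine obstacle arises; the lemma is essentially a translation between the tree picture and the simplicial integration picture.
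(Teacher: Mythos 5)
Your argument is correct and is essentially the paper's own: the paper justifies this lemma in the single sentence preceding it, namely that the root carries the highest coordinate of points in $\Delta_T$ and that the labels of $T_1$ and $T_2$ are incomparable for $<_T$, which is exactly the slicing-plus-Fubini factorisation you spell out. Your additional checks (that the recursive labelling makes the two branches incomparable, that the integrand blocks appear in the right order, and that relabelling identifies the blocks with $X_{T_1}$ and $X_{T_2}$) are the details the paper leaves implicit, so there is nothing to add.
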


\begin{prop}\label{produ}
Let $T=T_1\vee T_2$, $U=U_1\vee U_2$ be two planar binary trees, we have:
\begin{equation*}
	X_T(t)X_U(t)=\int\limits_0^tds X_T(s)X_{U_1}(s)H(s)X_{U_2}(s)
				+\int\limits_0^tds X_{T_1}(s)H(s)X_{T_2}(s)X_U(s).
\end{equation*}
In the formula, one or several of the trees $T_1,T_2,U_1,U_2$ may be the empty tree.
\end{prop}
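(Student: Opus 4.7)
The plan is to deduce this product formula directly from the preceding lemma together with the Leibniz rule and the fundamental theorem of calculus. Since both $X_T(t)$ and $X_U(t)$ are smooth functions of $t$, the product $X_T(t)X_U(t)$ is differentiable and
$$
	\frac{d}{dt}\bigl(X_T(t)X_U(t)\bigr) = \dot{X}_T(t)X_U(t) + X_T(t)\dot{X}_U(t).
$$
I would then substitute $\dot{X}_T(t)=X_{T_1}(t)H(t)X_{T_2}(t)$ and $\dot{X}_U(t)=X_{U_1}(t)H(t)X_{U_2}(t)$ from the previous lemma, yielding
$$
	\frac{d}{dt}\bigl(X_T(t)X_U(t)\bigr) = X_{T_1}(t)H(t)X_{T_2}(t)X_U(t) + X_T(t)X_{U_1}(t)H(t)X_{U_2}(t).
$$

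The next step is to integrate this identity from $0$ to $t$. The left-hand side yields $X_T(t)X_U(t) - X_T(0)X_U(0)$. I then need to observe that the boundary term at $s=0$ vanishes in the relevant generic case: if $T$ has $n\geq 1$ vertices then $X_T(0)=\langle \Delta_T|_{t=0}|H\rangle$ is the integral of an $n$-form over a zero-dimensional set and thus equals $0$, and similarly for $U$. Hence $X_T(0)X_U(0)=0$ whenever both $T$ and $U$ are non-empty, and integrating gives the claimed formula.

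Finally, I would verify that the formula also holds in the degenerate cases where some of the subtrees $T_i$ or $U_j$ (or even $T$ or $U$ themselves) are empty, under the convention $X_\emptyset(t)=\mathrm{Id}$. When $T$ or $U$ is the empty tree, one of the two terms on the right collapses and the identity reduces to the previous lemma applied to the non-empty tree; when one of $T_1,T_2,U_1,U_2$ is empty, the corresponding factor $X_\emptyset(s)=\mathrm{Id}$ simply drops out of the integrand, so the formula still makes sense and is correct.

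I do not expect any genuine obstacle here: the argument is a transparent combination of the Leibniz rule, the fundamental theorem of calculus, and the dynamical identity of the preceding lemma. The only point requiring a modicum of care is the bookkeeping concerning the empty tree and the vanishing of the boundary term at $s=0$, which is why I would treat the empty-tree cases explicitly at the end.
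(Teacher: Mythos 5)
Your argument is correct and is exactly the paper's intended proof: the paper dispatches this proposition with the single line ``the proof follows from the integration by parts formula,'' and integration by parts here is precisely your combination of the Leibniz rule, the identity $\dot{X}_T=X_{T_1}HX_{T_2}$ from the preceding lemma, and the fundamental theorem of calculus with vanishing boundary term at $s=0$. Your explicit treatment of the empty-tree conventions is a reasonable elaboration of what the paper leaves implicit.
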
 

\begin{proof}
The proof follows from the integration by parts formula.
\end{proof}

Proposition~\ref{produ} can be rephrased geometrically: define recursively a product $\ast$ on planar binary trees by
\begin{equation}
\label{shuffleProd}
	T\ast U=(T\ast U_1)\vee U_2 + T_1\vee (T_2\ast U),
\end{equation}
then 
$$
	X_T(t)X_U(t)=X_{T\ast U}(t).
$$
Using a self-explaining notation for $\Delta_{T\ast U}$, which is actually a union of polytopes associated to trees, we have
$$
	\Delta_T(t)\times\Delta_U(t)=\Delta_{T\ast U}(t).
$$

\begin{cor}
The product \eqref{shuffleProd} gives a combinatorial description of the cartesian product of the domains $\Delta_T$ encoded by trees. It provides the linear span of the set of planar binary trees, $PBT$, with the structure of an associative algebra.
\end{cor}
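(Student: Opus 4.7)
My plan is to derive both assertions from Proposition \ref{produ} together with Fubini's rule and the identification $X_T(t) = \langle \Delta_T | H \rangle$.

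For the first assertion—that \eqref{shuffleProd} encodes the Cartesian product of the domains $\Delta_T$—Proposition \ref{produ} yields $X_T(t)X_U(t) = X_{T\ast U}(t)$. By Fubini's rule $\langle P \times Q | H\rangle = \langle P|H\rangle\langle Q|H\rangle$, the left-hand side equals the integral of $H(t_1)\cdots H(t_{n+m})$ over $\Delta_T \times \Delta_U \subset [0,t]^{n+m}$. To lift this identity of integrals to the claimed set-theoretic decomposition, I would observe that the hyperplanes $t_i = t_{n+j}$ (for $1 \le i \le n$, $1 \le j \le m$) subdivide $\Delta_T \times \Delta_U$ into cells, each characterised by a total order on $[n+m]$ compatible with $<_T$ on the first block and with $<_U$ on the second. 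Each such cell is precisely some $\Delta_W$, and the recursion \eqref{shuffleProd} enumerates the resulting trees $W$ by splitting on whether the global maximum of the interleaved order comes from the root of $T$ (producing $T_1 \vee (T_2 \ast U)$) or from the root of $U$ (producing $(T \ast U_1) \vee U_2$).

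For associativity of $\ast$, I would leverage associativity of the Cartesian product itself: applying the geometric description just obtained twice, the trees appearing in $(T \ast U) \ast V$ parametrise the cells of $(\Delta_T \times \Delta_U) \times \Delta_V$, while those in $T \ast (U \ast V)$ parametrise the cells of $\Delta_T \times (\Delta_U \times \Delta_V)$. Since the two Cartesian products coincide, and since distinct labeled planar binary trees encode distinct posets on the joint vertex set (hence geometrically distinct open cells of the ambient cube), the two formal sums of trees must agree term by term. Equivalently, one may argue that $X_{(T\ast U)\ast V}(t) = X_T(t)X_U(t)X_V(t) = X_{T\ast(U\ast V)}(t)$ for all admissible $H$, and invoke the linear independence of the $X_W$ that follows from the disjointness of the $\Delta_W$.

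The main obstacle is to justify that the cells $\Delta_W$ produced by a single application of \eqref{shuffleProd} cover $\Delta_T \times \Delta_U$ without overlap (so that the union is a genuine partition modulo measure zero) and that the labelling convention for planar binary trees reconstructs the tree uniquely from the resulting Hasse diagram—both are routine inductions on vertex count but should be made explicit to rule out double counting in the recursion. A purely algebraic alternative would be to prove $(T \ast U)\ast V = T \ast (U \ast V)$ directly by induction on $n(T)+n(U)+n(V)$, expanding both sides via \eqref{shuffleProd}; this bypasses geometry but requires a case analysis on which of $T_1, T_2, U_1, U_2, V_1, V_2$ are empty, with the convention $\emptyset \ast W = W \ast \emptyset = W$.
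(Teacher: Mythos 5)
Your proposal is correct and follows essentially the same route as the paper, which simply notes that the claim follows by inspection of the formulas (Proposition~\ref{produ} plus the geometric identity $\Delta_T(t)\times\Delta_U(t)=\Delta_{T\ast U}(t)$) and that associativity of $\ast$ is inherited from the associativity of the cartesian product of polytopes. You merely make explicit the details the paper leaves implicit (the partition of $\Delta_T\times\Delta_U$ into cells indexed by trees and the linear independence of the $\Delta_W$), and your identification of which term of \eqref{shuffleProd} corresponds to the global maximum coming from the root of $T$ versus the root of $U$ is consistent with the paper's conventions.
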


This follows by direct inspection of the formulas; the associativity follows from the associativity of the cartesian product of polytopes.

\begin{rmk}
In other terms, the integration by parts formula induces an algebra structure on the set of planar binary trees (about the latter, see Remark~\ref{embPBT2} below). Integration by parts can be axiomatised through the notion of (weight $0$) Rota--Baxter algebra, that will be studied in the second part of this article. We will encounter then various phenomena reminiscent of what happens here with planar binary trees.
\end{rmk}

Iterated integrals parametrised by trees and iterated integrals parametrised by permutations are related through the canonical simplicial decompositions of the domains $\Delta_T$.
Let us write $S_T$ for the set of total orders refining $\leq_T$ (that is, if $<<$ is a total order in $S_T$, $i\leq_Tj\Rightarrow i<<j$) and identify a total order on $[n]$ with a permutation according to the rule: $2>>4>>1>>3$ is identified with $\sigma\in S_4$ with $\sigma^{-1}(1)=2,\sigma^{-1}(2)=4,\sigma^{-1}(3)=1,\sigma^{-1}(4)=3$. In general, the permutation $\beta\in S_n$ corresponds to the total order on $[n]$: $\beta^{-1}(1)>>\beta^{-1}(2)>>\dots >>\beta^{-1}(n).$

\begin{lem} 
The domain $\Delta_T$ is the union of the simplices $\Delta_{\sigma}$, where $\sigma$ runs over $S_T$, and we get
$$
	X_T(t):=\int\limits_{\Delta_T}dt_1\cdots dt_n H(t_1)\cdots H(t_n)
		   =\sum\limits_{\sigma \in S_T} X_{\sigma}(t).
$$
\end{lem}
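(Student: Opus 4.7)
The plan is to reduce the integral identity to the set-theoretic identity $\Delta_T=\bigcup_{\sigma\in S_T}\Delta_\sigma$ with the pieces meeting only along Lebesgue-null subsets, after which additivity of the integral settles everything. The argument is really just a careful reading of the two definitions modulo the order conventions fixed just above the lemma.

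First I would stratify the cube $[0,t]^n$. Any point $(t_1,\dots,t_n)$ whose coordinates are pairwise distinct carries a canonical total order $\prec$ on $[n]$ defined by $i\prec j:\Leftrightarrow t_i<t_j$. Under the identification of total orders with permutations recalled just before the lemma, this determines a unique $\sigma\in S_n$, and the point lies in $\Delta_\sigma$; conversely, $\Delta_\sigma$ is nothing but the closure of the open locus of points whose ranking pattern is exactly $\sigma$. Hence the collection $\{\Delta_\sigma\}_{\sigma\in S_n}$ tiles $[0,t]^n$ modulo the measure-zero subset where some two coordinates coincide.

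Second I would match the defining constraints. By definition, $\Delta_T$ is cut out by the inequalities $t_i\leq t_j$ whenever $j\leq_T i$. For a point of $[0,t]^n$ with pairwise distinct coordinates and associated permutation $\sigma$, lying in $\Delta_T$ is then equivalent to the induced total order $\prec$ being compatible with $\leq_T$, which, after the sign-flip built into the convention $t_i\leq t_j\Leftrightarrow j\leq_T i$, is exactly the statement that $\sigma$ is a linear extension of $\leq_T$, i.e.\ $\sigma\in S_T$. Combining this with the previous stratification yields the essentially disjoint decomposition
\[
\Delta_T=\bigcup_{\sigma\in S_T}\Delta_\sigma.
\]

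Additivity of the Lebesgue integral over an essentially disjoint union then produces $X_T(t)=\sum_{\sigma\in S_T}X_\sigma(t)$ without further work. The only place that demands genuine attention -- and what I expect to be the main (if modest) obstacle -- is the bookkeeping of three different order conventions that enter simultaneously: the tree order $\leq_T$ (root maximal), the time orientation $t_i\leq t_j\Leftrightarrow j\leq_T i$, and the identification $\sigma\leftrightarrow(\sigma^{-1}(1)\gg\cdots\gg\sigma^{-1}(n))$ of permutations with total orders. Once these three are consistently aligned, the rest of the argument is essentially a tautology.
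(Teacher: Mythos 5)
Your proof is correct and follows exactly the route the paper intends: the paper offers no proof beyond an invitation to check the claim on an example, and your stratification of the cube by order patterns, identification of the generic points of $\Delta_T$ with the linear extensions of $\leq_T$, and appeal to additivity of the integral over pieces meeting only in hyperplanes (a rule the paper states explicitly in Section 2.1) is the standard completion of that exercise. One small caution on conventions: with the paper's rule $t_i\leq t_j\Leftrightarrow j\leq_T i$ the root is the \emph{minimal} element for $\leq_T$ while its time coordinate is maximal, so your parenthetical ``root maximal'' should be read as referring to the time variable rather than to $\leq_T$ itself.
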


We invite the reader to check the statement in the lemma. Here is an example for the order $2<_T1$, $2<_T3$. We find that 
\begin{align*}
	\Delta_T&=\{(t_1,t_2,t_3)\ |\ 0\leq t_1,t_3\leq t_2\leq t\}\\
	&=\{(t_1,t_2,t_3)\ |\ 0\leq t_1\leq t_3\leq t_2\leq t\}\cup \{(t_1,t_2,t_3)\ |\ 0\leq t_3\leq t_1\leq t_2\leq t\}\\
	&=\{(t_1,t_3,t_2)\ |\ 0\leq t_1\leq t_2\leq t_3\leq t\}\cup \{(t_2,t_3,t_1)\ |\ 0\leq t_1\leq t_2\leq t_3\leq t\}
\end{align*} 
and $S_T=\{1>>3>>2,3>>1>>2\}=\{(1,3,2),(2,3,1)\}$. That is,
$$
	X_T(t)
	=\int\limits_{\Delta_T}dt_1\cdots dt_n H(t_1)H(t_2)H(t_3)
	=X_{(1,3,2)}(t) + X_{(2,3,1)}(t).
$$

 We also get directly as a by-product of the geometrical analysis the following
 
\begin{cor}\label{embPBT}
The map $\kappa$ from $PBT$ to $\mathcal S$ 
$$
	T\longmapsto \sum\limits_{\sigma\in S_T}\sigma
$$
induced by the simplicial decomposition
$$
	\Delta_T(t)=\bigcup\limits_{\sigma\in S_T}\Delta_\sigma(t)
$$
is an (injective) algebra map.
\end{cor}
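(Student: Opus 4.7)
The statement has two parts: $\kappa$ is an algebra morphism, and $\kappa$ is injective.

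\smallskip

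\noindent\textbf{Multiplicativity.} The plan is to compute two simplicial decompositions of the same polytope $\Delta_T \times \Delta_U$ and compare the resulting sums of labels. First, using the recursive tree product \eqref{shuffleProd}, one has $\Delta_T \times \Delta_U = \bigcup_{V \in T \ast U} \Delta_V$, and refining each term via the lemma just above the corollary, $\Delta_V = \bigcup_{\nu \in S_V} \Delta_\nu$, yields a top-dimensional simplicial decomposition whose labels sum (in $\mathcal S$) to $\sum_{V \in T \ast U} \kappa(V) = \kappa(T \ast U)$. Second, refining each factor first gives $\Delta_T \times \Delta_U = \bigcup_{\sigma \in S_T,\, \beta \in S_U} \Delta_\sigma \times \Delta_\beta$, and then applying Definition~\ref{def:simplexalg} to each cartesian product, $\Delta_\sigma \times \Delta_\beta = \bigcup_{\nu \in \sigma \ast \beta} \Delta_\nu$, produces a decomposition whose labels sum to $\kappa(T) \ast \kappa(U)$. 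Since the decomposition of a fixed polytope as a union of top-dimensional standard simplices $\Delta_\nu$ with pairwise disjoint interiors is unique, the two multisets of labels coincide, so $\kappa(T \ast U) = \kappa(T) \ast \kappa(U)$.

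\smallskip

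\noindent\textbf{Injectivity.} The strategy is to show that, for each fixed $n$, the sets $S_T$ partition $S_n$ as $T$ ranges over planar binary trees with $n$ vertices; from this it is immediate that the $\kappa(T)$ are sums over pairwise disjoint non-empty subsets of the permutation basis of $\QQ[S_n]$, and therefore linearly independent. To prove the partition, I would exhibit an explicit inverse to the assignment $\sigma \mapsto T$ (the unique tree with $\sigma \in S_T$), read off from the recursive labeling convention. Since the root is the minimum of $<_T$, it must appear in position $1$ of every linear extension, so its label equals $\sigma(1)$; writing $k := \sigma(1) - 1$, the labels $\{1, \ldots, k\}$ belong to $T_1$ and $\{k+2, \ldots, n\}$ to $T_2$, and the respective subsequences of $\sigma$ (after the standardization shift $j \mapsto j - \sigma(1)$ on the right) determine $T_1$ and $T_2$ by recursion. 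This produces a well-defined inverse on all of $S_n$, establishing the partition.

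\smallskip

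\noindent\textbf{Main obstacle.} The multiplicativity reduces to careful bookkeeping once the geometric decompositions of the preceding lemma and of Definition~\ref{def:simplexalg} are in hand. The subtler step is injectivity: it relies crucially on the particular root-first recursive labeling of $PBT$, without which the partition property would fail. This is, in effect, the Loday--Ronco / sylvester-equivalence picture, repackaged via the simplicial decompositions $\Delta_T = \bigcup_{\sigma \in S_T} \Delta_\sigma$.
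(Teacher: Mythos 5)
Your multiplicativity argument is correct and is essentially the paper's own: the corollary is presented there as a direct by-product of the two geometric facts you invoke, namely $\Delta_T\times\Delta_U=\Delta_{T\ast U}$ (Proposition~\ref{produ} and \eqref{shuffleProd}) together with the simplicial decompositions $\Delta_T=\bigcup_{\sigma\in S_T}\Delta_\sigma$ and $\Delta_{\sigma}\times\Delta_{\beta}=\bigcup_{\nu\in\sigma\ast\beta}\Delta_\nu$, compared through the uniqueness of the decomposition of a region into the top-dimensional simplices $\Delta_\nu$, which have pairwise disjoint interiors. No objection there.

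The injectivity part has the right strategy (the sets $S_T$ partition $S_n$, so the $\kappa(T)$ are supported on pairwise disjoint non-empty subsets of the permutation basis of $\QQ[S_n]$ and are linearly independent), but the explicit inverse you describe is wrong under the paper's conventions, so as written the partition claim is not established. The root of $T$ carries the \emph{largest} time coordinate, hence is the \emph{minimum} of $<_T$ and of every linear extension $<<$ in $S_T$; with the paper's identification of the total order $\beta^{-1}(1)>>\cdots>>\beta^{-1}(n)$ with the permutation $\beta$, that minimum is $\sigma^{-1}(n)$, i.e.\ the position at which the value $n$ occurs in the word $\sigma(1)\cdots\sigma(n)$ --- not $\sigma(1)$. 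Concretely, for the tree $T$ with root labelled $2$ and leaves $1,3$ the paper computes $S_T=\{(1,3,2),(2,3,1)\}$, and your recipe assigns to $\sigma=(1,3,2)$ the root label $\sigma(1)=1$, which is not the root of $T$. The repair is to set $r:=\sigma^{-1}(n)$, reconstruct $T_1$ recursively from the induced order on $\{1,\dots,r-1\}$ and $T_2$ from the induced order on $\{r+1,\dots,n\}$ (shifted by $-r$); since $T=T_1\vee T_2$ imposes no relations between the labels of $T_1$ and those of $T_2$ other than both lying above the root, this produces the unique $T$ with $\sigma\in S_T$, and the partition follows. Note also that the labelling of Section~\ref{sect-tree} is the in-order labelling (left subtree, then root, then right subtree), not a root-first one; it is precisely this convention that makes the binary-search-tree inverse work.
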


\begin{rmk}\label{embPBT2}
The map from trees to permutations in the last corollary generalises to arbitrary trees, not necessarily binary ones, and is sometimes referred to as ``arborification process'', a terminology introduced by Jean Ecalle in the theory of dynamical systems \cite{fm} (although the idea of associating to a tree, viewed as a preorder, the set of all its strict refinements --linearisations-- is likely to be much older). The arborification process allows to view a planar binary tree as the sum of its linearisations and to identify $PBT$ with a subspace of $\mathcal S$. This point of view, that was followed in \cite{lr}, motivates further the proposition saying that  the space $PBT$ is stable by the product in $\mathcal S$, and is therefore a subalgebra of the algebra of permutations. The algebra $PBT$ can also be obtained and studied from the point of view of binary search trees
\cite{hnt}.
\end{rmk} 

\begin{rmk} 
In analysis, Bloch sequences, Dyck paths, bracketings and non-crossing partitions, which are in bijection with planar binary trees, have also been used to describe the perturbative expansions in time-dependent perturbation theory (see \cite{d+}, where the equivalence between the various points of view is detailed). The advantage of using trees in iterated integral calculus together with perturbative expansions with respect to other families of combinatorial objects is that it is relatively straightforward to extend their use to more complex situations, as illustrated for example by their recent use in the study of stochastic (partial) differential equations, where perturbative solutions are indexed by complex decorated trees \cite{hairer}.
\end{rmk}

 
\subsection{Flows and Hopf algebraic structures}
\label{ssect:Hopf}

Recall that a bialgebra $H$ is both an associative algebra as well as a coassociative coalgebra, such that the coproduct of the latter is a morphism of algebras from $H$ to $H \otimes H$ or, equivalently, such that the product of the former is a morphism of coalgebras from $H \otimes H$ to $H$. A Hopf algebra is a bialgebra equipped with an antipode, the convolution inverse to the identity map. Commutative Hopf algebras appear naturally as algebras of functions on groups and, conversely, an affine algebraic group is the spectrum of a commutative Hopf algebra. We omit details and refer the reader to the various textbooks and surveys on the subject \cite{Cartier3,Radford}. In the study of differential equations, Hopf algebra structures appear naturally in relation to the group structures underlying flow maps. This observation is one of the many reasons motivating the definition of a Hopf algebra structure on the descent algebra or on the algebra of simplices/permutations.

Notice first that the initial value problem, $\dot{X}(t)=X(t)H(t)$, $X(0)=Id$, can be solved on the interval $[0,t]$ as a succession of solutions: $X(t_1),\ldots ,X(t_n)=X(t)$ with $0<t_1<\cdots<t_n=t$. The $i$-th solution is then obtained by solving the differential equation on the subinterval $[t_{i-1},t_i]$ with initial condition $X(t_{i-1})$. For example, given $0 < s < t$ we have that 
\begin{eqnarray*}
	X(t)   &=&X(s)\Big(Id + \int\limits_{\Delta_{t,s}^1}dt_1H(t_1)+ 
		\cdots +\int\limits_{\Delta_{t,s}^n}dt_1\cdots dt_nH(t_1)\cdots H(t_n)+\cdots \Big)\\
		&=& \Big(Id + \int\limits_{\Delta_{s,0}^1}dt_1H(t_1)+ 
		\cdots +\int\limits_{\Delta_{s,0}^n}dt_1\cdots dt_nH(t_1)\cdots H(t_n)+\cdots \Big)\\ 
		&& \quad \Big(Id + \int\limits_{\Delta_{t,s}^1}dt_1H(t_1)+ 
		\cdots +\int\limits_{\Delta_{t,s}^n}dt_1\cdots dt_nH(t_1)\cdots H(t_n)+\cdots \Big),
\end{eqnarray*}
where $\Delta_{t,s}^n:=\{(t_1,\dots,t_n)|s\leq t_1\leq \dots\leq t_n\leq t\}$. This observation, known as Chen's rule, is helpful in practice, especially in numerical analysis as well as in the theory of rough paths, where it allows to ``control'' the construction of rough paths \cite{max}.

This idea leads to a rule of thumb for defining coproducts for Hopf algebra structures according to ``time-orderings''. Starting from a domain parametrised by $0 \leq t_1 \leq \cdots \leq t_n\leq t$ and given $0\leq i\leq n$ together with an arbitrary $s \leq t$, we define the two subsets $0 \leq t_1 \leq \cdots \leq t_i \leq s$ and $s\leq t_{i+1}\leq \cdots \leq t_n\leq t$. When applied, for example, to a permutation $\sigma\in S_n$, starting from 
$$
	\Delta_{\sigma,t}^n=\{(t_{\sigma(1)},\dots,t_{\sigma(n)}|0\leq t_1\leq \dots\leq t_n\leq t\},
$$
Chen's rule leads to the definition of a coproduct on $\mathcal S$:
\begin{equation}
\label{MRcoproduct}
	\Delta(\sigma):=\sum\limits_{i=0}^n\sigma_{|i}\otimes st(\sigma_{|>i}).
\end{equation}
Here, we consider a permutation written as the sequence of its values, and $\sigma_{|i}$ stands for the subsequence associated to the first $i$ integers, and $\sigma_{|>i}$ stands for the subsequence associated to the integers strictly greater than $i$. The map $st$ shifts the sequence $\sigma_{|>i}$ of integers by adding $-i$. For example, for the permutation $\sigma=(3,8,1,4,2,5,7,6)$ and $i=4$ we get $\sigma_{|i}=(3,1,4,2)$, $\sigma_{|>i}=(8,5,7,6)$ and $st(\sigma_{|>i})=(8-4,5-4,7-4,6-4)=(4,1,3,2)$. 

As the operation of dividing a sequence into blocks is coassociative, these rules define a coassociative coproduct on $\mathcal S$ (see \cite{mr} for a combinatorial proof):

\begin{thm}
The coproduct \eqref{MRcoproduct} equips the algebra $\mathcal S$ with the structure of a Hopf algebra.
\end{thm}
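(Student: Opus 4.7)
The plan is to verify coassociativity of $\Delta$, bialgebra compatibility with $\ast$, and existence of an antipode. For coassociativity, I would observe that iterating the cut corresponds to splitting the value set $[n]$ of $\sigma\in S_n$ into three consecutive intervals. Explicitly, both $(\Delta\otimes\mathrm{id})\circ\Delta(\sigma)$ and $(\mathrm{id}\otimes\Delta)\circ\Delta(\sigma)$ rewrite as the same triple sum over $0\leq j\leq i\leq n$, whose $(j,i)$-summand is the subsequence of $\sigma$ with values in $[1,j]$, tensored with the standardization of the subsequence of values in $(j,i]$, tensored with the standardization of the subsequence of values in $(i,n]$. The only nontrivial bookkeeping is that in $(\mathrm{id}\otimes\Delta)\circ\Delta$ the second cut is applied inside $st(\sigma_{|>i})$ and must be unshifted back to the global value $i+k$, but this matches the first ordering on the nose.

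For the compatibility $\Delta(\sigma\ast\beta)=\Delta(\sigma)\ast\Delta(\beta)$ with $\sigma\in S_n$ and $\beta\in S_m$, I would exploit the characterisation from Definition~\ref{def:simplexalg}: a permutation $\nu\in S_{n+m}$ appears in $\sigma\ast\beta$ if and only if its restriction to positions $[1,n]$ standardizes to $\sigma$ and its restriction to positions $[n+1,n+m]$ standardizes to $\beta$. Given such a $\nu$ and a cut value $k$, let $p$ be the number of values in $[1,k]$ occurring among the first $n$ positions of $\nu$, and set $q:=k-p$. Because $\nu_{|k}$ preserves the relative order of positions, the values $\leq k$ coming from positions in $[1,n]$ form an initial segment of $\nu_{|k}$ which, after standardization, reproduces exactly $\sigma_{|p}$; the trailing part standardizes to $\beta_{|q}$. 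Hence $\nu_{|k}$ is a term of $\sigma_{|p}\ast\beta_{|q}$, and symmetrically $st(\nu_{|>k})$ is a term of $st(\sigma_{|>p})\ast st(\beta_{|>q})$. The map $(\nu,k)\longmapsto(p,q,\nu_{|k},st(\nu_{|>k}))$ sets up a bijection with the terms of $\sum_{p,q}(\sigma_{|p}\ast\beta_{|q})\otimes(st(\sigma_{|>p})\ast st(\beta_{|>q}))$, which is precisely $\Delta(\sigma)\ast\Delta(\beta)$.

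The counit $\epsilon$ is taken as the projection onto the one-dimensional subspace $\QQ\cdot S_0\cong\QQ$, and the (co)unitality axioms are immediate from the definitions of $\ast$ and $\Delta$. Since $\mathcal{S}$ is $\NN$-graded by the rank of the permutation and connected, the resulting bialgebra is automatically a Hopf algebra: the antipode is constructed inductively (or via Takeuchi's formula) from the reduced coproduct $\bar\Delta(\sigma):=\Delta(\sigma)-\sigma\otimes 1-1\otimes\sigma$. The main obstacle is the compatibility step, because $\Delta$ cuts by the \emph{values} borne by the permutation while $\ast$ combines by \emph{positions}; one must simultaneously track how the threshold $k$ partitions the value set $[n+m]$ and how the induced count $p$ consistently partitions both $\sigma$ and $\beta$. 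Once the bijection above is set up carefully, everything else reduces to formal verification.
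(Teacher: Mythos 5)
Your proof is correct, and it actually supplies more than the paper does: the text only remarks that ``the operation of dividing a sequence into blocks is coassociative'' and refers to Malvenuto--Reutenauer for the combinatorial proof, so the compatibility $\Delta(\sigma\ast\beta)=\Delta(\sigma)\ast\Delta(\beta)$ --- the only genuinely nontrivial axiom --- is precisely what you have to provide, and your value-versus-position bookkeeping is the standard and correct way to do it. The one step you should write out is the inverse of your map $(\nu,k)\mapsto(p,q,\nu_{|k},st(\nu_{|>k}))$: given $(p,q,\gamma_1,\gamma_2)$ with $\gamma_1$ a term of $\sigma_{|p}\ast\beta_{|q}$ and $\gamma_2$ a term of $st(\sigma_{|>p})\ast st(\beta_{|>q})$, the permutation $\nu$ is recovered uniquely because the set of \emph{positions} carrying values $\le k$ is forced to be $\{i\le n \,:\, \sigma(i)\le p\}\cup\{n+j \,:\, \beta(j)\le q\}$ --- indeed the values $\le k$ sitting in the first $n$ slots are necessarily the $p$ smallest values occupying those slots, hence correspond under $st_{[n]}$ to the values $\le p$ of $\sigma$, and similarly for $\beta$; one then interleaves the letters of $\gamma_1$ and of $\gamma_2$ shifted by $k$ into these two position sets and checks that the result is again a term of $\sigma\ast\beta$. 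This matters because a given pair in $S_k\times S_{n+m-k}$ can arise from several $(p,q)$, so the identity is really a bijection of index sets, with multiplicities tracked as you indicate. A variant closer to the paper's geometric viewpoint would obtain the same compatibility from Chen's rule: cutting the product $\Delta_{\sigma,t}^n\times\Delta_{\beta,t}^m$ at an intermediate time $s$ and applying Fubini in either order yields the two sides of the identity; your combinatorial route is the cleaner one to make rigorous. The counit and the graded-connected argument for the antipode are standard and fine.
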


The Hopf algebra $\mathcal S$ is neither commutative nor cocommutative. It is known under several names, i.e., the Hopf algebra of permutations, the Malvenuto--Reutenauer Hopf algebra, or the Hopf algebra of free quasi-symmetric functions. 

Since $\mathcal D$ is a subalgebra of $\mathcal S$ freely generated by the identity permutations $1_n$ and since 
$$
	\Delta(1_n)=\sum\limits_{i=0}^n1_i \otimes 1_{n-i},
$$ 
it forms a Hopf subalgebra of $\mathcal S$.

\begin{cor}
The coproduct \eqref{MRcoproduct} restricts to $\mathcal D$ and equips the descent algebra with the structure of a Hopf algebra.
\end{cor}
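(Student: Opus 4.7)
The plan is to verify that $\mathcal D$ is a sub-bialgebra of $\mathcal S$; the existence of the antipode will then follow automatically from the fact that $\mathcal D$ is graded connected.

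First I would check that $\mathcal D$ is closed under the coproduct $\Delta$. By Corollary \ref{cor:free}, $\mathcal D$ is the free associative algebra on the identity permutations $\{1_n\}_{n\geq 1}$, so it suffices to verify the property on these generators and extend multiplicatively. The hypothesis of the corollary already gives
$$
\Delta(1_n) = \sum_{i=0}^n 1_i \otimes 1_{n-i} \in \mathcal D \otimes \mathcal D.
$$
Since $\Delta \colon \mathcal S \to \mathcal S \otimes \mathcal S$ is a morphism of algebras, the preimage $\Delta^{-1}(\mathcal D \otimes \mathcal D)$ is a subalgebra of $\mathcal S$; as this subalgebra contains the generators $1_n$ of $\mathcal D$, it contains all of $\mathcal D$. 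Equivalently, by induction on the length of a word in the $1_n$, if $x,y\in\mathcal D$ satisfy $\Delta(x),\Delta(y)\in\mathcal D\otimes\mathcal D$, then
$$
\Delta(x\ast y)=\Delta(x)\cdot \Delta(y)\in \mathcal D\otimes \mathcal D,
$$
where the right-hand product is the componentwise one on $\mathcal S\otimes \mathcal S$.

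Next I would observe that the counit $\epsilon$ of $\mathcal S$, which sends the empty permutation in $S_0$ to $1\in k$ and vanishes on all permutations of positive degree, restricts trivially to $\mathcal D$ since $\mathcal D$ is graded with the same degree-zero component $k\cdot 1_0$ as $\mathcal S$. Together with the previous step, this shows that $(\mathcal D,\ast,\Delta,\epsilon)$ is a sub-bialgebra of $\mathcal S$.

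Finally, for the antipode, I would invoke the standard fact that any graded connected bialgebra admits a unique antipode, constructed recursively on each graded component by the formula $S(x) = -x - \sum S(x') x''$ on the reduced coproduct. Since $\mathcal D$ is connected (its degree-zero part is $k \cdot 1_0$) and graded, this antipode exists on $\mathcal D$, and by uniqueness it coincides with the restriction of the antipode of $\mathcal S$. Hence $\mathcal D$ inherits the Hopf algebra structure from $\mathcal S$. The only genuinely non-formal input here is the computation of $\Delta(1_n)$, which is already recorded before the statement; everything else is categorical bookkeeping, and no real obstacle arises.
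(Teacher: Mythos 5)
Your proposal is correct and follows essentially the same route as the paper, which simply notes that $\mathcal D$ is generated as an algebra by the $1_n$ and that $\Delta(1_n)=\sum_{i=0}^n 1_i\otimes 1_{n-i}$ lies in $\mathcal D\otimes\mathcal D$, so that $\mathcal D$ is a Hopf subalgebra of $\mathcal S$. You have merely spelled out the standard bookkeeping (multiplicativity of $\Delta$, the counit, and the antipode via graded connectedness) that the paper leaves implicit.
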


Note that, as a Hopf algebra, the descent algebra is cocommutative since it is generated as an algebra by the $1_n$ on which the coproduct is cocommutative. 

\ \par

Similar arguments, using Chen's rule to construct coproducts, apply to planar binary trees. In combinatorial terms, the coproduct obtained that way is simply the restriction to $PBT$ of the one on $\mathcal S$, when trees are identified with the sum of their linearisations \cite{hnt,lr}. The simplest way to understand the construction of this coproduct on trees is however via the graphical representation. Since the bottom-to-top ordering of the vertices reflects decreasing times, the coproduct is obtained by splitting in all possible ways a tree $T$ as the union of a subtree containing the root and the family of the remaining trees. 

More formally, let us say that a family $V=\{v_1,\ldots ,v_k\}$ of vertices of $T$ distinct from the root is admissible if they cannot be compared with respect to the order $<_T$, that is, $v_i <_T v_j$ cannot hold for any pair of vertices in $V$. We assume that the family $V$ is ordered according to the labels put on the vertices (from left to right and top to bottom in $T$; notice that this is a different order from the one obtained by restriction of $<_T$) and write (slightly abusively) $T_i$ for the subtree of $T$ having $v_i$ as its root. As a poset, $T_i$ is the set of all vertices $v$ such that $v_i <_T v$. We write $T_V$ for the subtree containing the root, remaining after the $T_i$ have been pruned, and we denote by $T'_V$ the product $T_1 \ast \cdots \ast T_k$. In geometrical terms, this last product corresponds to a cartesian product.

\begin{cor}\label{thm:treecoprod}
With the above notations in place, the coproduct \eqref{MRcoproduct} restricts to $PBT$ and equips the algebra of planar binary trees with the structure of a Hopf algebra. When acting on trees, the coproduct reads
$$
	\Delta(T):=1\otimes T+\sum\limits_V T_V\otimes T'_V,
$$
where the sum runs over all admissible subsets of vertices of the tree $T$.
\end{cor}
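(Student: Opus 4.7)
The plan is to use the arborification embedding $\kappa : PBT \hookrightarrow \mathcal S$ of Corollary \ref{embPBT} and directly compute $\Delta(\kappa(T))$ with the Malvenuto--Reutenauer coproduct \eqref{MRcoproduct}, then regroup the resulting permutations in tree-theoretic terms. The key observation is that, for any linear extension $\sigma \in S_T$ of $<_T$ and any $0 \leq i \leq n$, the set of vertices $W_i(\sigma) := \{ v \in T \,:\, \sigma(v) \leq i \}$ is a down-set (an ideal) of $(T, <_T)$: if $u <_T v$, then $u<<v$ in the refining order, hence $\sigma(u) < \sigma(v)$. Conversely, every ideal $W$ of $(T, <_T)$ with $|W| = i$ arises in this way, and the $\sigma \in S_T$ with $W_i(\sigma) = W$ are in bijection with pairs consisting of a linear extension of $<_T |_W$ and a linear extension of $<_T |_{T \setminus W}$; moreover, after the standard relabellings, these two extensions are exactly what $\sigma_{|i}$ and $st(\sigma_{|>i})$ encode in \eqref{MRcoproduct}.

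Grouping terms in $\Delta(\kappa(T)) = \sum_{\sigma \in S_T}\Delta(\sigma)$ according to the ideal $W = W_i(\sigma)$ then yields
\[
\Delta(\kappa(T)) = \sum_{W \text{ ideal of }(T,<_T)} \kappa_W \otimes \kappa_{T \setminus W},
\]
where $\kappa_W$ (resp.~$\kappa_{T \setminus W}$) denotes the sum over standardised linear extensions of the restricted poset. Ideals of $(T,<_T)$ are in bijection with admissible antichains $V \subseteq T \setminus \{\mathrm{root}\}$ via $W = T_V$ and $V = \min_{<_T}(T \setminus W)$, with the empty ideal $W = \emptyset$ corresponding to the excluded case where $V$ would contain the root; this boundary contribution is precisely the displayed term $1 \otimes \kappa(T)$. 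Since $T \setminus T_V$ is the disjoint union (as a poset) of the subtrees $T_1, \ldots, T_k$ rooted at the $v_i$, its standardised linear extensions are exactly the shuffles of linear extensions of each $T_j$, which by Definition \ref{def:simplexalg} is the product $\kappa(T_1) \ast \cdots \ast \kappa(T_k)$. Using that $\kappa$ is an algebra map (Corollary \ref{embPBT}), this equals $\kappa(T_1 \ast \cdots \ast T_k) = \kappa(T'_V)$, while trivially $\kappa_{T_V} = \kappa(T_V)$. The claimed formula and the fact that $\Delta$ sends $\kappa(PBT)$ into $\kappa(PBT) \otimes \kappa(PBT)$ follow at once; gradedness and connectedness of $PBT$ automatically supply the antipode.

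The main point to be careful about is the correct dictionary between the MR-splitting ``by values'' (i.e., by level $i$ in the $<<$-order of $\sigma$) and the tree-theoretic operation of ``cutting along an antichain $V$''; once one sees that the ideals of $(T,<_T)$ parametrise the MR-cuts and that the complement of such an ideal is naturally a disjoint union of subtrees, the remainder of the argument is essentially a repackaging of the definition of $\kappa$ and of the product $\ast$ in $\mathcal S$.
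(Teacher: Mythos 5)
Your overall strategy---push $T$ through the arborification map $\kappa$ of Corollary \ref{embPBT}, apply \eqref{MRcoproduct} to each linearisation $\sigma\in S_T$, and regroup the cuts according to which set of vertices lands in each tensor leg---is exactly the right way to make the paper's sketch precise, and your identification of the cuts with order ideals/antichains, and of the standardised linear extensions of the complement with the product $\kappa(T_1)\ast\cdots\ast\kappa(T_k)$ via Definition \ref{def:simplexalg}, is the correct mechanism. The problem is that your ``key observation'' reverses the paper's conventions. In Section \ref{sect-tree} the total order $\beta^{-1}(1)>>\beta^{-1}(2)>>\cdots>>\beta^{-1}(n)$ is identified with the permutation $\beta$, and $S_T$ consists of the total orders refining $\leq_T$ in the sense $i\leq_T j\Rightarrow i<<j$; hence for $\sigma\in S_T$ one has $u<_T v\Rightarrow \sigma(u)>\sigma(v)$, not $\sigma(u)<\sigma(v)$. (Check the paper's worked example: the cherry with root $2$ has $S_T=\{(1,3,2),(2,3,1)\}$ and $\sigma(2)=3=n$ in both cases---the root receives the \emph{largest} value.) Consequently $W_i(\sigma)=\{v\,:\,\sigma(v)\leq i\}$ is a filter (up-set) of $(T,<_T)$, not an ideal: the left leg $\sigma_{|i}$ of \eqref{MRcoproduct} collects the pruned subtrees, and the root sits in the right leg. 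Running your grouping argument with the correct orientation yields
$$
\Delta(\kappa(T))=\kappa(T)\otimes 1+\sum\limits_V \kappa(T'_V)\otimes\kappa(T_V),
$$
i.e.\ the displayed formula with the two tensor factors exchanged.

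This exchange is not cosmetic. Take the four-vertex tree with root $2$, children $1$ and $4$, and $3$ a child of $4$, so that $\kappa(T)=(1,4,2,3)+(2,4,1,3)+(3,4,1,2)$. The bidegree $(2,2)$ part of $\Delta(\kappa(T))$ computed from \eqref{MRcoproduct} is $(1,2)\otimes(1,2)+(1,2)\otimes(2,1)+(2,1)\otimes(2,1)$, which equals $\sum_V\kappa(T'_V)\otimes\kappa(T_V)$ but not $\sum_V\kappa(T_V)\otimes\kappa(T'_V)=(1,2)\otimes(1,2)+(2,1)\otimes(1,2)+(2,1)\otimes(2,1)$; the three-vertex examples are too symmetric to detect the flip. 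To be fair, the formula displayed in the corollary itself carries the same transposition relative to \eqref{MRcoproduct} (by the paper's own description, early times---the crown---go to the left leg under Chen's rule), so your final answer matches the statement, but only because your reversed identification of linear extensions with permutations compensates for it; as written, the step ``$u<_Tv$ hence $\sigma(u)<\sigma(v)$'' is false under the paper's conventions and a careful reader would land on the transposed formula. The substantive conclusions survive untouched: $\Delta$ does restrict to $\kappa(PBT)$, the complement of a root-containing down-set is a disjoint union of subtrees whose shuffled linearisations give the $\ast$-product, and gradedness plus connectedness supply the antipode. You should either prove the formula in the form $\Delta(T)=T\otimes 1+\sum_V T'_V\otimes T_V$, or state explicitly that you are using the opposite identification of total orders with permutations (equivalently the opposite coproduct), and flag the discrepancy with the display in the corollary.
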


\newpage


\section{Advances in chronological calculus}
\label{sect:chrono}

Chronological algebras and time-ordered products appear in (almost) uncountably many places, especially in theoretical physics and control theory. They can be understood as an abstract way of encoding integration by parts. Chronological calculus augments the tools of classical algebra for analysis (such as associative as well as Lie algebra techniques), by introducing the notions of shuffle and pre-Lie products. The former refines the notion of associative product, while the latter augments that of Lie bracket. A powerful framework for chronological calculus is the general theory of -- weight zero -- Rota--Baxter algebras, that will be developed in the following section of the article. 

In a nutshell, Rota--Baxter algebras of weight zero axiomatise the notion of associative algebra equipped with an integral-type operator $R$ satisfying integration by part rule: 
$$
	R(x)R(y)=R(xR(y))+R(R(x)y)).
$$
The general notion of Rota--Baxter algebra allows for the existence of a remainder term 
$$
	R(x)R(y)=R(xR(y))+R(R(x)y) - \theta R(xy)
$$ 
weighted by the scalar $\theta$. Examples of maps satisfying the Rota--Baxter relation of weight $\theta$ are provided by re-summing series or when dealing with generalised integral operators. 

Before turning to the general theory of Rota--Baxter algebras, we will show how the underlying techniques apply in the usual formalism of chronological calculus. Attributing properly the origin of this formalism, which evolved along 70 years of developments in theoretical physics, control theory and combinatorics would be rather difficult and lies outside the scope of the present survey. Among many others, the names of Chen, Feynman, Fliess, Kawski, Rota and Sch\"utzenberger should certainly be quoted (the latter, since it seems that his introduction of half-shuffles in \cite{schutz} in the context of free Lie algebras was actually an elaboration of the notion of time-ordered products as studied by Russian physicists -- hence the notation $T$ for these products in his article\footnote{Hoang Ngoc Minh, private communication.}). 

The development presented in this part of the article is mostly obtained by revisiting the seminal work of Agrachev and Gamkrelidze \cite{AG} who introduced and featured the role of pre-Lie algebra structures in integral/differential calculus. This should be seen in the light of later works. Chapoton and Livernet \cite{ChaLiv} studied the operad of free pre-Lie algebras, Oudom and Guin constructed their enveloping algebras \cite{go}. The first author together with Dominique Manchon \cite{EFM2} used shuffle and pre-Lie algebras in the theory of differential equations and the Magnus formula. In \cite{EFP2014} we studied the interpretation of the structure of the enveloping algebra of a pre-Lie algebra by means of Feynman's time-ordered products. The second author together with Fr\'ed\'eric Chapoton \cite{ChapPat} studied the exponential construction and interpretation of the Magnus operator and its inverse.

In what follows we will extend the results of \cite{patring} which relate the combinatorics of simplices with the theory of free Lie algebras, and show how another fundamental construction, the Eilenberg--MacLane and Sch\"utzenberger ``half-shuffle'' products \cite{schutz}, provides a way to relate chronological calculus with simplicial geometry.


\subsection{Chronological calculus and half-shuffles}
\label{ssect:chonocalc}

Chronological calculus is based on the idea of time-ordering of operators. Consider for example two time-dependent operators, $M(t)$ and $N(t)$, with $M(0)=N(0)=0$, in a non-unital algebra $A$ of operators -- having suitable regularity properties allowing to compute derivatives, integrals, and so on. We say that the product $\dot{M}(s)\dot{N}(u)$ is causal if $s > u$ and anti-causal if $s < u$. Note that we will consider integrals of these products, therefore the equal-time, or diagonal term, $\dot{M}(s)\dot{N}(s)$ is irrelevant. The classical integration by parts rule 
$$
	M(t)N(t)
	= \int_0^t ds \int_0^s du \dot{M}(s)\dot{N}(u) + \int_0^tds \int_0^s du \dot{M}(u)\dot{N}(s)
	=:(M \nwarrow N)(t)+(M \nearrow N)(t)
$$
can then be viewed as a way to decompose the usual operator product into the sum of a ``causal'' product and an ``anti-causal'' product. In this sense, the $\nwarrow $ and $\nearrow$ symbols indicate in what direction time increases, i.e., from right to left in $(M \nwarrow N)(t):=\int_0^t dM(s)N(s)$ and from left to right in $(M \nearrow N)(t):=\int_0^t M(s)dN(s)$. Let us mention that the idea of time-ordering seems to originate in theoretical physics, for example, when defining Feynman propagators respectively Feynman's time-ordered product (further below we will return to the latter).

It is useful to understand this construction of operator products in more abstract terms. Recall that a left (resp.~right) $B$-module, or representation of an associative algebra $B$, is a vector space $V$ equipped with a left (resp.~right) action of $B$ (written as $\cdot$-operation) such that $a \cdot (b \cdot v)=(ab)\cdot v$ (resp.~$(v \cdot b)\cdot a = v \cdot (ba)$) for arbitrary elements $a,b\in B$ and $v \in V$. Returning to time-dependent operators, the associativity of the operator product in $A$, i.e., $M(NP)=(MN)P$, can then be formulated by considering $A$ as a bimodule, i.e., as a left and right module, over itself for the operator product. The integration by parts rule leads to the decomposition of the product into two commuting right and left actions of $A$ on itself, denoted as above in terms of the symbols  $\nwarrow$ and $\nearrow$. Beside indicating the direction of increasing time, the symbols also point in the direction of the action, i.e., in $M \nwarrow N$, the operator $N$ acts from the right on $M$. Indeed, we verify quickly that
\begin{align*}
	((M \nwarrow N) \nwarrow P)(t)	&=\int_0^tds\dot{M}(s)N(s)P(s)=(M \nwarrow NP)(t)\\
	(P \nearrow (N \nearrow M))(t)	&=\int_0^tdsP(s)N(s)\dot{M}(s)=(PN \nearrow M)(t)\\
	((P \nearrow M) \nwarrow N)(t) 	&=\int_0^tdsP(s)\dot{M}(s)N(s)=(P \nearrow (M \nwarrow N))(t).
\end{align*}

In particular, using this notation, the associativity, $(MN)P=M(NP)$, can be formulated as
$$
	(M \nwarrow N) \nwarrow P+(M \nearrow N) \nwarrow P+(MN) \nearrow P
	=M \nwarrow (NP)+M \nearrow (N \nwarrow P)+M \nearrow (N \nearrow P),
$$
and splits into the three operator identities 
\begin{equation}
\label{EMids}
	(M \nwarrow N) \nwarrow P = M \nwarrow (NP),\ 
 	(M \nearrow N) \nwarrow P = M \nearrow (N \nwarrow P),\ 
	(MN) \nearrow P   = M \nearrow (N \nearrow P).
\end{equation}
It turns out that the above notions of causal and anti-causal products, $\nwarrow$ and $\nearrow$, together with these three relations happen to coincide with the axiomatisation of shuffle products in algebraic topology. The latter was one (among many) of the seminal ideas of Eilenberg and MacLane \cite[Sect. 5]{EM}, who used the three relations above to give an abstract proof of the associativity of the shuffle product in a simplicial framework. 

This connection between chronological calculus and algebraic topology is in our opinion illuminating. Indeed, it explains how the causal and anti-causal products are computed when dealing with iterated integrals and reads as follows. We adapt here the definition of half-shuffle products of general simplicial objects to the context of the present paper, that is we define them in the (geometrical realisation of the) algebra of simplices. Recall that given two permutations $\sigma\in S_n$, $\beta\in S_m$, the shuffle product of the corresponding simplices induced by the decomposition of their cartesian product reads
$$
	\Delta_{\sigma,t}^n\times\Delta_{\beta,t}^m
	=\bigcup\limits_{\gamma\in S_{n+m}\atop 
	{st_{[n]}(\gamma(i)) =\sigma(i),i\leq n\atop 
	st_{[m]}(\gamma(i)) =\beta(i-n),i>n}}\Delta_{\gamma,t}^{n+m} 
	=\sum\limits_{\nu\in S_{n+m}\atop 
	{pr_n^+(\Delta_{\nu,t}^{n+m}) =\Delta_{\sigma,t}^n \atop 
	pr_m^-(\Delta_{\nu,t}^{n+m}) =\Delta_{\beta,t}^m}} \Delta_{\nu,t}^{n+m}.
$$
The so-called left and right half-shuffle products are then defined respectively by (summing up to the full shuffle product)

$$
	\Delta_{\sigma,t}^n < \Delta_{\beta,t}^m
	:=\bigcup\limits_{\gamma\in S_{n+m},
	\gamma^{-1}(n+m) =\sigma^{-1}(n)\atop 
	{st_{[n]}(\gamma(i))=\sigma(i),i\leq n\atop 
	st_{[m]}(\gamma(i)) =\beta(i-n),i>n}} \Delta_{\gamma,t}^{n+m},
$$
and
$$
	\Delta_{\sigma,t}^n > \Delta_{\beta,t}^m
	=\bigcup\limits_{\gamma\in S_{n+m},
	\gamma^{-1}(n+m) =n+\beta^{-1}(m)\atop 
	{st_{[n]}(\gamma(i)) =\sigma(i),i\leq n \atop 
	st_{[m]}(\gamma(i))=\beta(i-n),i>n}}\Delta_{\gamma,t}^{n+m} ,
$$ 
such that, on iterated integrals (with the same notations as in the previous section) the left and right half-shuffle products correspond respectively to the causal and anti-causal products. For example,
$$
	(X_\sigma \nwarrow X_\beta)(t)= \langle \Delta_{\sigma,t}^n <\Delta_{\beta,t}^m |H \rangle .
$$
It is easy to check that they define two commuting left and right actions of the algebra of simplices on itself or, equivalently, that the
three identities (\ref{EMids}) above still hold when considering simplices instead of time-dependent operators. For example, given $\sigma \in S_n$, $\beta \in S_m$ and $\nu \in S_p$, 
$$
	\Delta_{\sigma,t}^n<(\Delta_{\beta,t}^m\times \Delta_{\nu,t}^p)
	=\bigcup\limits_{\gamma\in S_{n+m+p},
	\gamma^{-1}(n+m+p) =\sigma^{-1}(n)\atop 
	{st_{[n]}(\gamma(i)) =\sigma(i),i\leq n\atop 
	{st_{[m]}(\gamma(i)) =\beta(i-n),n<i\leq n+m\atop 
	st_{[p]}(\gamma(i)) =\nu(i-n-m),i>n+m}}} 
	\Delta_{\gamma,t}^{n+m+p} 
	=(\Delta_{\sigma,t}^n<\Delta_{\beta,t}^m )< \Delta_{\nu,t}^p.
$$

In other terms, the algebra of time-dependent operators, with the binary maps $\nwarrow$ and $\nearrow$, and the algebra of simplices, with $<$ and $>$, are both examples of so-called shuffle algebras, where the latter notion is given in the next

\begin{defn} [Shuffle algebra] \label{def:shufflealgebra}
A shuffle algebra is a (non-unital) associative algebra $(A, \cdot_{\!\scriptscriptstyle A})$ whose product splits into the sum of two so-called half-shuffle products, $\cdot_{\!\scriptscriptstyle A} = \prec + \succ$, such that
\begin{itemize}
\item the two binary operations $\prec$ and $\succ$ define respectively a right and a left action of $A$ on itself, commuting with each other,
\end{itemize}
or, equivalently,
\begin{itemize}
\item the left and right half-shuffle products hold the following shuffle relations:
\begin{eqnarray}
	(M \prec N) \prec P&=&M \prec (N\cdot_{\!\scriptscriptstyle A} P), 	\label{shuffle1}\\
 	(M \succ N) \prec P&=&M \succ (N \prec P), 					\label{shuffle2}\\
	(M \cdot_{\!\scriptscriptstyle A} N) \succ P&=&M \succ (N \succ P).	\label{shuffle3}
\end{eqnarray}
\end{itemize}
\end{defn}

\begin{rmk}
When dealing with functions instead of operators (or with scalar-valued iterated integrals), then the algebra product $\cdot_{\!\scriptscriptstyle A}$ is commutative, that is $\int_0^tds\dot{M}(s)N(s)=\int_0^tdsN(s)\dot{M}(s)$. This amounts to the relation $(M\nwarrow N)(t)=(N \nearrow M)(t)$. The axioms simplify then to the sole relation $(M \nwarrow N) \nwarrow P=M \nwarrow (N \cdot_{\!\scriptscriptstyle A} P)$ that defines the notion of commutative shuffle algebras. See \cite{schutz} for details.
\end{rmk}

\begin{rmk}
A shuffle algebra $A$ can be augmented with a unit: set for $x \in A$, $1 \nwarrow x=x \nearrow 1:=0$, $x \nwarrow 1=1 \nearrow x:=x$. However the half products $1 \nwarrow 1$ and $1\nearrow 1$ cannot be defined consistently. The trick seems to originate in \cite{schutz} (in a commutative setting).
\end{rmk}

\begin{rmk}
A Theorem of Foissy \cite{Foissy} shows that the shuffle algebra of simplices is free as a shuffle algebra. The proof of the Theorem is rather involved and may be considered among the nicest achievements resulting from the theory of shuffle algebras. It is however already interesting to look at the subalgebra spanned by $\Delta_t^1$ \cite{lr}.
\end{rmk}

We set $a \succ b \prec c$ for $(a \succ b) \prec c=a \succ (b \prec c)$. Then we have the following

\begin{lem}
The free shuffle algebra $Sh(x)$ generated by a single letter $x$ has then a basis $B$ defined inductively by:
\begin{itemize}
\item $x\in B$
\item If $y,z\in B$, then $y\succ x \prec z$, $y \succ x$ and $x \prec z$ are in $B$.
\end{itemize}
\end{lem}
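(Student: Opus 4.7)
My plan is to show that $B$ both spans $Sh(x)$ and is linearly independent; together these give the basis claim.

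For spanning I would proceed by induction on the total degree $n$, using the three shuffle relations~(\ref{shuffle1})--(\ref{shuffle3}) as rewriting rules. The base case $n=1$ is immediate since $x\in B$. In the inductive step, every degree-$n$ element of $Sh(x)$ is a linear combination of products $y\prec z$ or $y\succ z$ with $y,z$ of strictly lower degree, so by bilinearity and induction it suffices to check that $y\prec z$ and $y\succ z$ lie in $\mathrm{span}(B)$ whenever $y,z\in B$. A case analysis on the outer shape of $y$ handles the four subcases of $y\prec z$: when $y=x$, directly $y\prec z = x\prec z \in B$; when $y = u\succ x$, relation~(\ref{shuffle2}) gives $(u\succ x)\prec z = u\succ(x\prec z) = u\succ x\prec z \in B$; in the remaining cases $y=x\prec u$ or $y=u\succ x\prec v$, relation~(\ref{shuffle1}) produces $y\prec z = y'\prec(u\prec z) + y'\prec(u\succ z)$ (with $y'=x$ or $u\succ x$), and since $u\prec z$ and $u\succ z$ have strictly smaller total degree, induction places them in $\mathrm{span}(B)$, after which $y'\prec b \in B$ for every $b\in B$ closes the reduction. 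The $y\succ z$ cases are dual, using~(\ref{shuffle2}) when $z$ has outer $\prec$, and~(\ref{shuffle3}) in the form $y\succ(w\succ\,\cdot\,) = (y\prec w + y\succ w)\succ\,\cdot\,$ to reduce the degree of the remaining subexpressions.

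For linear independence I would argue by dimension count. Setting $b_n := |B\cap Sh(x)_n|$, the inductive definition of $B$ gives $b_1=1$ and
\[
 b_n \;=\; 2\,b_{n-1} + \sum_{k=1}^{n-2} b_k\,b_{n-1-k}, \qquad n\ge 2,
\]
with the three contributions coming respectively from $x\prec z$, $y\succ x$, and $y\succ x\prec z$. Solving yields the Catalan generating function $\sum_n b_n t^n = (1-2t-\sqrt{1-4t})/(2t)$, so $b_n = C_n$, the $n$-th Catalan number. On the other hand, a classical result (Loday's theorem on the dendriform operad, equivalent to the identification of $Sh(x)$ with the shuffle algebra of planar binary trees alluded to in Remark~\ref{embPBT2}) asserts that $\dim Sh(x)_n = C_n$. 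Spanning plus matching cardinality then forces $B$ to be a basis in each degree.

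The main obstacle is the linear-independence half, since the dimension formula $\dim Sh(x)_n = C_n$ is a nontrivial external input. A self-contained alternative would be to exhibit a shuffle algebra isomorphism between $Sh(x)$ and the planar-binary-tree algebra of Section~\ref{sect-tree}, sending $x$ to the single-leaf tree and $y\succ x\prec z \mapsto T_y \vee T_z$ (with the obvious degenerations when $y$ or $z$ is absent), then checking that $\prec$ and $\succ$ are intertwined by the grafting formula~(\ref{shuffleProd}); linear independence in $Sh(x)$ would then follow from its manifest independence in the tree basis of $PBT$. By contrast, the spanning step is essentially bookkeeping once one commits to the rewrite directions above.
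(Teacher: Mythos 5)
Your proposal is correct, and the spanning half coincides with the paper's argument: the paper also proceeds by induction on the number of occurrences of $x$, using the consequences $(y \succ x \prec z) \prec (y' \succ x \prec z') = y \succ x \prec \bigl(z \cdot_{\!\scriptscriptstyle A} (y' \succ x \prec z')\bigr)$ and its mirror for $\succ$ of the relations \eqref{shuffle1}--\eqref{shuffle3}; your case analysis is just a more explicit unfolding of the same reduction. Where you genuinely diverge is in the linear-independence half. The paper does not count dimensions at all: it evaluates the elements of $B$ in the shuffle algebra of simplices by substituting $\Delta_t^1$ for $x$, and shows recursively, via the projection identities $p_n^+(\Delta_{\sigma,t}^n>\Delta_t^1<\Delta_{\beta,t}^m)=\Delta_{\sigma,t}^n$ and $p_m^-(\Delta_{\sigma,t}^n>\Delta_t^1<\Delta_{\beta,t}^m)=\Delta_{\beta,t}^m$, that each such element contains a ``marker'' permutation absent from the expansions of all the others; independence of the images then pulls back to $Sh(x)$ along the canonical surjection. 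This leading-term argument is self-contained and yields the extra conclusion the paper wants, namely that the shuffle subalgebra of $\mathcal S$ generated by $\Delta_t^1$ is itself free. Your primary route instead imports the fact $\dim Sh(x)_n = C_n$, which is of essentially the same depth as the lemma (its standard proof exhibits the planar-binary-tree basis), so as stated it trades self-containedness for brevity; your proposed fallback -- an explicit shuffle isomorphism $Sh(x)\cong PBT$ intertwining $\prec,\succ$ with the grafting formula \eqref{shuffleProd} -- would repair that and is a legitimate alternative, morally close to the paper's use of a faithful concrete model. One small point of hygiene in the counting argument: the Catalan recursion counts formal expressions, so a priori it only bounds $|B\cap Sh(x)_n|$ from above; the chain $\dim Sh(x)_n \le |B\cap Sh(x)_n| \le C_n = \dim Sh(x)_n$ still closes, but this should be said.
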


Indeed, by induction on the length of the elements (given by the number of $x$ appearing), the identities 
$$
	(y \succ x \prec z) \prec (y' \succ x \prec z') 
	= y \succ x \prec (z \cdot_{\!\scriptscriptstyle A} (y' \succ x \prec z'))
$$ 
and  
$$
	(y \succ x \prec z) \succ (y' \succ x \prec z')
	=((x \succ x \prec z)\cdot_{\!\scriptscriptstyle A}  y') \succ x \prec z')
$$ 
imply that the elements of $B$ span linearly $Sh(x)$. 

To prove that they are linearly independent, it is enough to check that, in the shuffle algebra spanned by simplices $\Delta^n_{\sigma,t}$, the elements constructed recursively when $\Delta_t^1$ is substituted for $x$ in the definition of $B$ contain each a simplex indexed by a permutation that does not appear in the expansion of the other terms. This follows recursively from the definition of the products $<$ and $>$ and the identities $p_n^+(\Delta_{\sigma,t}^n>\Delta_t^1<\Delta_{\beta,t}^m)=\Delta_{\sigma,t}^n$ and $p_m^-(\Delta_{\sigma,t}^n>\Delta_t^1<\Delta_{\beta,t}^m)=\Delta_{\beta,t}^m$, where $\sigma\in S_n$, $\beta\in S_m$. In particular, the shuffle subalgebra of the shuffle algebra of simplices generated by $\Delta_t^1$ is a free shuffle algebra.

Foissy's freeness properties imply that computations with iterated integrals are ``generic'' for shuffle calculus: an algebraic identity involving the symbols $\nwarrow$ and $\nearrow$, or $<$ and $>$, that holds for iterated integrals of an arbitrary time-dependent operator holds for arbitrary shuffle algebras.

\begin{rmk} 
The terminology {\rm{chronological product/algebra}} is due to Agrachev and Gamkrelidze \cite{AG} and is synonymous in their work with {\rm{pre-Lie product/algebra}}. The terminology {\rm{dendriform relations}} is often used for the three identities \eqref{shuffle1}-\eqref{shuffle3} encoding the splitting of the associativity of the shuffle product, and {\rm{dendriform algebra}} for the corresponding notion of algebra. We prefer, however, the terminology {\rm{shuffle relations/algebra}} for conceptual and historical reasons. Indeed, the notion of {\rm{shuffle}} is natural in the light of the idea of splitting a product into two, i.e., left and right, {\rm{half-shuffle}} products corresponding respectively to a right and left action. 
\end{rmk}


\subsection{Chronological calculus and pre-Lie products}
\label{ssect:chronoPreLie}

In this section we survey the Lie-theoretic perspective of chronological calculus, following Agrachev and Gamkrelidze \cite{AG}. Starting from the representa- tion-theoretic point of view, we consider a vector space  $A$ with a binary product $\triangleright \colon A \otimes A \to A$ and the associated bracket product $[a,b]_\triangleright := a \triangleright b-b \triangleright a$. Write $L_x$ for the linear endomorphism of $A$ defined by left multiplication:
$$
	L_x(y):=x\triangleright y,
$$
and define the usual commutator bracket of linear endomorphisms of $A$, $[L_x,L_y]:=L_x\circ L_y-L_y\circ L_x$.

\begin{defn} The pair $(A,\triangleright)$ is a left pre-Lie algebra if and only if for any $x,y\in A$, the identity $[L_x,L_y]=L_{[x,y]_\triangleright}$ holds, which is equivalent to the left pre-Lie relation   
$$
	x \triangleright (y\triangleright z) - (x\triangleright y)\triangleright z
	=y\triangleright (x\triangleright z) - (y\triangleright x)\triangleright z.
$$
\end{defn}

The canonical example of a pre-Lie algebra, which can be traced back to Caley's work on rooted trees, is given in terms of derivations. One verifies for example that the space of vector fields generated by the derivations $x^n\partial_x$ is a pre-Lie algebra with product $(x^n\partial_x)\vartriangleright(x^m\partial_x):=m \cdot x^{n+m-1}\partial_x$. We refer the reader to \cite{Cartier2,ChaLiv,Manchon2} for other examples and more details on the notion of pre-Lie algebra. 

\begin{rmk}
The notion of right pre-Lie algebra is defined similarly in terms of the product $\triangleleft \colon A \otimes A \to A$, together with the right pre-Lie relation $ x \triangleleft (y \triangleleft z) - (x \triangleleft y) \triangleleft z  = x \triangleleft (z \triangleleft y) - (x\triangleleft z) \triangleleft y$. We will abbreviate left pre-Lie to pre-Lie from now on.
\end{rmk}

One can always augment $A$ with a unit. The pre-Lie product on $A \oplus k$ is defined by $(a+l)\triangleright (b+l'):=a \triangleright b+l'a+lb+ll'$ for $a,b\in A,\ l,l'\in k$. Using the map $x \longmapsto L_x$, this observation allows to embed $(A, [ -, - ]_\triangleright)$ into the vector space of linear endomorphisms of $A\oplus k$ equipped with its canonical Lie bracket. In particular, there is a forgetful functor from pre-Lie to Lie algebras. In other terms

\begin{lem}\label{lem:Liepre-Lie}
The pair $(A,[ -, - ]_\triangleright)$ is a Lie algebra.
\end{lem}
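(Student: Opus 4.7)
The plan is to verify the two Lie algebra axioms: antisymmetry and the Jacobi identity. Antisymmetry is immediate from the very definition $[a,b]_\triangleright = a \triangleright b - b \triangleright a$, which gives $[a,b]_\triangleright = -[b,a]_\triangleright$ without any appeal to the pre-Lie relation.

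The natural strategy for the Jacobi identity is to exploit the defining property $[L_x, L_y] = L_{[x,y]_\triangleright}$ together with the injectivity of $x \longmapsto L_x$ on the unital extension $A \oplus k$ (since $L_x(1) = x$, the map $L$ is injective on that unital extension). Iterating the representation-theoretic identity then yields
\[
L_{[[x,y]_\triangleright,\, z]_\triangleright} \;=\; [L_{[x,y]_\triangleright},\, L_z] \;=\; [[L_x, L_y],\, L_z],
\]
and likewise for the two cyclic permutations of $(x,y,z)$. Summing the three cyclic terms, the right-hand side becomes the Jacobi sum of the commutator bracket on the associative endomorphism algebra of $A \oplus k$, which vanishes identically. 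Hence $L$ applied to the Jacobi sum of $[-,-]_\triangleright$ is zero, and injectivity of $L$ forces the Jacobi sum itself to vanish.

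A more pedestrian alternative is to prove Jacobi by direct expansion: each of $[[x,y]_\triangleright, z]_\triangleright$, $[[y,z]_\triangleright, x]_\triangleright$, $[[z,x]_\triangleright, y]_\triangleright$ unfolds into four terms, producing twelve summands of the form $\pm (a\triangleright b)\triangleright c$ and $\pm c \triangleright (a\triangleright b)$. Repeated application of the pre-Lie identity — which says that the associator $(x,y,z) \mapsto x\triangleright(y\triangleright z) - (x\triangleright y)\triangleright z$ is symmetric in its first two arguments — pairs these into cancelling contributions. The only real obstacle along the direct route is the bookkeeping of signs and argument positions across the twelve terms, which is precisely the reason the representation-theoretic reduction above is preferable: it offloads the Jacobi identity for $[-,-]_\triangleright$ onto the automatic Jacobi identity of the commutator bracket of linear endomorphisms.
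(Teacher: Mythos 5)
Your proof is correct and follows essentially the same route as the paper: augmenting $A$ with a unit so that $L_x(1)=x$ makes $x\mapsto L_x$ injective, and then pulling back the Jacobi identity from the commutator bracket of endomorphisms of $A\oplus k$ via the defining relation $[L_x,L_y]=L_{[x,y]_\triangleright}$. The direct twelve-term expansion you mention as an alternative is not needed, and the paper likewise omits it.
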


The link with classical chronological calculus is as follows. As in the previous section, we denote by $(A,\cdot_{\!\scriptscriptstyle A})$ an algebra of time-dependent operators and consider the Lie bracket $[M,N](t)$. The integration by parts formula implies that 
$$
	[M,N](t)=\int_0^tdsM(s)\dot{N}(s)+\int_0^tds \dot{M}(s)N(s) 
				-\int_0^tdsN(s)\dot{M}(s) - \int_0^tds \dot{N}(s)M(s),
$$ 
which can be written as the difference of: 
$$
	(M\triangleright N)(t):=(M \nearrow N)(t) - (N \nwarrow M)(t) 
	=\int_0^tdsM(s)\dot{N}(s) - \int_0^tds\dot{N}(s)M(s)
$$
and $(N\triangleright M)(t)$
so that $[M,N](t)=[M,N]_\triangleright(t).$ That the algebra $A$ is indeed a pre-Lie algebra follows from the Jacobi identity:
\begin{align*}
	([M,N]_\triangleright\ \triangleright P)(t)
	&=([M,N]\triangleright P)(t)=\int_0^tds [[M(s),N(s)],\dot{P}(s)]\\
	&=-\int_0^tds[[N(s),\dot{P}(s)],M(s)]+\int_0^tds[[M(s),\dot{P}(s)],N(s)]\\
	&=(M\triangleright (N\triangleright P))(t) - (N\triangleright (M\triangleright P))(t).
\end{align*}

More generally, there is a forgetful functor from shuffle to pre-Lie algebras \cite{Aguiar}:

\begin{prop}\label{prop:shufpreLie}
Any shuffle algebra $(A,\prec,\succ)$ has the structure of a pre-Lie algebra with 
\begin{equation}
\label{preLieShuf}
	x \triangleright y := x \succ y- y \prec x.
\end{equation}
\end{prop}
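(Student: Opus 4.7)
The strategy is a direct verification: expand both sides of the left pre-Lie relation
$$x \triangleright (y \triangleright z) - (x \triangleright y) \triangleright z = y \triangleright (x \triangleright z) - (y \triangleright x) \triangleright z$$
using $x \triangleright y = x \succ y - y \prec x$, then normalise every resulting term using the three shuffle axioms \eqref{shuffle1}--\eqref{shuffle3}. Equivalently (and more conceptually), I would show that the associator $A(x,y,z) := x \triangleright (y \triangleright z) - (x \triangleright y) \triangleright z$ is symmetric under the exchange $x \leftrightarrow y$, which is precisely the pre-Lie axiom.

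Concretely, expanding $x \triangleright (y \triangleright z)$ yields four terms
$$x \succ (y \succ z) - x \succ (z \prec y) - (y \succ z) \prec x + (z \prec y) \prec x,$$
and the axioms rewrite them as $(x \cdot_{\!\scriptscriptstyle A} y) \succ z$, $-x \succ (z \prec y)$, $-y \succ (z \prec x)$, and $z \prec (y \cdot_{\!\scriptscriptstyle A} x)$ respectively, using \eqref{shuffle3}, \eqref{shuffle2}, \eqref{shuffle1} in turn. Similarly $(x \triangleright y) \triangleright z$ expands into four terms that, after applying $\cdot_{\!\scriptscriptstyle A} = \prec + \succ$, precisely cancel the $\succ\succ$ piece of $(x \cdot_{\!\scriptscriptstyle A} y) \succ z$ and the $\prec\prec$ piece of $z \prec (y \cdot_{\!\scriptscriptstyle A} x)$, leaving
$$A(x,y,z) = (x \prec y) \succ z + (y \prec x) \succ z - x \succ (z \prec y) - y \succ (z \prec x) + z \prec (x \succ y) + z \prec (y \succ x).$$

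The punchline is that this last expression is manifestly invariant under the transposition $x \leftrightarrow y$: the six summands get permuted among themselves in three pairs. Hence $A(x,y,z) = A(y,x,z)$, which is the left pre-Lie relation.

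There is essentially no obstacle beyond bookkeeping; the only thing to watch is applying each of the three axioms to the correct pair of terms (the $\succ\succ$ combination requires \eqref{shuffle3}, the mixed $\succ\prec$ requires \eqref{shuffle2}, and the $\prec\prec$ requires \eqref{shuffle1}), and keeping the signs straight when expanding both $x \triangleright -$ and $-\triangleright z$ through the definition. Once those six normalised terms are on the table the symmetry in $(x,y)$ is immediate.
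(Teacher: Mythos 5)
Your proof is correct, and it is more explicit than what the paper offers: the paper simply omits the proof, remarking that it ``follows from the calculations above in the particular case of time-dependent operators'', i.e.\ the computation where the pre-Lie identity for $M \triangleright N = M \nearrow N - N \nwarrow M$ is deduced from the Jacobi identity of the commutator together with the integral representation $\int_0^t ds\,[[M(s),N(s)],\dot P(s)]$. That argument lives in the concrete model (and, strictly speaking, only yields the general statement via the genericity/freeness of the shuffle algebra of iterated integrals, or by noting that the same manipulations are purely formal). Your route is the clean abstract one: you expand the associator and normalise each term using exactly one of \eqref{shuffle1}--\eqref{shuffle3}, arriving at
$$
A(x,y,z) = (x \prec y) \succ z + (y \prec x) \succ z - x \succ (z \prec y) - y \succ (z \prec x) + z \prec (x \succ y) + z \prec (y \succ x),
$$
which I have checked term by term; the cancellation of the $\succ\succ$ and $\prec\prec$ pieces against the expansion of $(x\triangleright y)\triangleright z$ works exactly as you say, and the manifest $x \leftrightarrow y$ symmetry of the six remaining terms is precisely the left pre-Lie relation. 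What your approach buys is a self-contained proof valid in any shuffle algebra with no appeal to an integral model; what the paper's approach buys is brevity and a conceptual link to the Jacobi identity. The only cosmetic point: when you list the axioms ``\eqref{shuffle3}, \eqref{shuffle2}, \eqref{shuffle1} in turn'' against four terms, it would be clearer to say explicitly that the second term $-x \succ (z \prec y)$ is left untouched.
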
 

Note that $x \triangleleft y := x \prec y - y \succ x = - y \triangleright x$ is a right pre-Lie product. We omit the proof, that follows from the calculations above in the particular case of time-dependent operators.


\subsection{Time ordered products and enveloping algebras}
\label{ssect:timeordered}

One of the central ideas of chronological calculus is to develop a group theoretic approach adapted to the study of flows of differential equations, based on the finer structure provided by pre-Lie algebras rather than Lie algebras. A key step is the construction of enveloping algebras of pre-Lie algebras. 

Recall that the enveloping algebra $U(L)$ of a Lie algebra $L$ is an associative algebra (uniquely defined up to isomorphism) with the following properties: 
\begin{itemize}

\item the Lie algebra $L$ embeds in $U(L)$ (as a Lie algebra, where the Lie algebra structure on $U(L)$ is induced by the associative product, i.e., in terms of the commutator bracket $[x,y]:=xy-yx$)

\item for any associative algebra $A$ (we write $g_A$ for its associated Lie algebra, that is $A$ equipped with the commutator bracket), there is a -- natural -- bijection between the Lie algebra maps from $L$ to $g_A$ and the associative algebra maps from $U(L)$ to $A$.
\end{itemize}
Usually, the algebra $U(L)$ is constructed as the quotient of the free associative algebra over $L$ by the ideal generated by relations $[l,l']-ll'-l'l=0$, $l,l'\in L$. It is a cocommutative Hopf algebra with a coproduct determined by its action on the elements of $L$, i.e., for $l \in L$, $\Delta(l):= l \otimes \mathbf{1} + \mathbf{1} \otimes l$.

Since pre-Lie algebras are Lie algebras with extra structure, their enveloping algebras have also additional structures that have been described in the works of Oudom--Guin \cite{go} and Lada--Markl \cite{lm}. In the first part of this section, we briefly recall the results of these works. Recall first that the algebra of polynomials $k[V]$ over a vector space $V$ is equipped with a bi-commutative Hopf algebra structure such that given a monomial $v_1\cdots v_n$, $v_i\in V$, the coproduct reads
$$
	\Delta(v_1\cdots v_n)=\sum\limits_{I\coprod J=[n]}v_I\otimes v_J,
$$
where $v_I:=v_{i_1}\cdots v_{i_l}$, for $I=\{i_1,\dots,i_l\} \subset [n]:=\{1,\ldots,n\}$.

From now on $(V,\triangleright )$ denotes a pre-Lie algebra.

\begin{defn}\label{symetricbrace}
The brace map on $V$ 
$$
	k[V]\otimes V\longrightarrow  V
$$
$$
	P\otimes v\longmapsto  \{P\}v
$$
is defined inductively by
$$
	\{w\}v:=w \triangleright v,\ w \in V
$$
$$
	\{w_1,\dots,w_n\}v:=\{w_n\}(\{w_1,\dots,w_{n-1}\}v)-\sum\limits_{i=1}^{n-1}\{w_1,\dots,\{w_n\}w_i,\dots,w_{n-1}\}v.
$$
 \end{defn}

We define now a product $\ast$ on $k[V]$ in terms of the brace map: for $v_1,\dots,v_n$ and $w_1,\dots,w_m$ in $V$,
\begin{equation}\label{astproduct}
	(w_1 \cdots w_m) \ast (v_1 \cdots v_n)=\sum\limits_f W_0(\{W_1\}v_1)\cdots (\{W_n\}v_n),
\end{equation}
where the sum is over all maps $f$ from $\{1,\ldots,m\}$ to $\{0,\ldots,n\}$ and $W_i:=\prod_{j\in f^{-1}(i)} w_j$. For example, $w\ast v=wv+\{w\}v$. The central result of Oudom--Guin is the following

\begin{thm}
The product $\ast$ is associative and unital. It equips the vector space of polynomials $k[V]$ with the structure of an enveloping algebra of $V$ and makes $(k[V],\Delta)$ a Hopf algebra (the Hopf algebra structure being the one associated to the enveloping algebra structure of $k[V]$).
\end{thm}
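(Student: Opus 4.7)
The plan is to follow the standard Oudom--Guin strategy, which amounts to rewriting formula \eqref{astproduct} in Sweedler notation and then reducing everything to a single combinatorial fact about the brace maps. Concretely, I would first introduce an extension of the brace operation to a bilinear map $\triangleright \colon k[V]\otimes k[V] \to k[V]$, uniquely determined by the requirements that (i) on $k[V] \otimes V$ it coincides with $\{-\}-$ of Definition~\ref{symetricbrace}, and (ii) for each fixed $P \in k[V]$ the map $P \triangleright -$ is a coalgebra morphism relative to the bi-commutative coproduct $\Delta$, i.e.
$$
	P \triangleright (QR) = (P_{(1)} \triangleright Q)(P_{(2)} \triangleright R),
	\qquad P \triangleright 1 = \varepsilon(P)\cdot 1.
$$
One checks that, with this extension, formula \eqref{astproduct} reads simply
$$
	P \ast Q = P_{(1)} \cdot (P_{(2)} \triangleright Q),
$$
which is the form I would work with from now on.

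The heart of the proof is to establish two lemmas. The first is that the iterated brace $\{w_1,\dots,w_n\}v$ is symmetric in its first $n$ arguments; this is the translation of the pre-Lie axiom and I would prove it by induction on $n$, using the inductive formula from Definition~\ref{symetricbrace} and the pre-Lie identity as the base case $n=2$. The symmetry guarantees that the extension $\triangleright$ above is well defined on the symmetric algebra $k[V]$. The second lemma is that $\triangleright$ is a genuine left action of the associative algebra $(k[V],\cdot)$ on itself, namely
$$
	(PQ) \triangleright R = P \triangleright (Q \triangleright R).
$$
Both sides expand, via property (ii), into sums of products of iterated braces applied to factors of $R$, and one verifies the equality by combining coassociativity, cocommutativity of $\Delta$, and the symmetry of braces just established.

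Once these two lemmas are in hand, associativity of $\ast$ is a short direct computation in Sweedler notation:
$$
	(P\ast Q)\ast R = P_{(1)} (P_{(2)} \triangleright Q)_{(1)} \bigl( P_{(3)} (P_{(2)} \triangleright Q)_{(2)} \triangleright R \bigr),
$$
which, after using the coalgebra-morphism property of $\triangleright$ and coassociativity, coincides with the expansion of $P \ast (Q \ast R)$. Unitality $1 \ast Q = Q = Q \ast 1$ is immediate from $1 \triangleright Q = Q$ and $\varepsilon(P_{(2)}) P_{(1)} = P$. For the enveloping-algebra claim, note that for $v,w \in V$ one has $v\ast w = vw + v\triangleright w$, so
$$
	v \ast w - w \ast v = v \triangleright w - w \triangleright v = [v,w]_{\triangleright},
$$
hence the inclusion $V \hookrightarrow (k[V],\ast)$ is a Lie algebra morphism. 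The natural increasing filtration $k[V]_{\le n}$ by polynomial degree is compatible with $\ast$, and the associated graded ring is $k[V]$ itself; by the PBW theorem applied to the Lie algebra $(V,[-,-]_{\triangleright})$, this forces $(k[V],\ast)$ to be $U(V)$, so the universal property is satisfied.

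Finally, for the Hopf algebra structure, it suffices to verify that $\Delta$ is an algebra morphism for $\ast$, i.e.\ $\Delta(P\ast Q) = \Delta(P) \ast \Delta(Q)$. Writing out both sides using the definition of $\ast$ in terms of $\triangleright$, this reduces to checking that $\triangleright$ is itself a morphism of coalgebras when one equips $k[V]\otimes k[V]$ with the tensor coproduct; again this follows from the coalgebra-morphism property built into $\triangleright$, cocommutativity, and coassociativity. The antipode is then obtained by the standard recursion on the filtration. The main obstacle in the whole argument is the first lemma: the symmetry of the iterated braces is not formal, and the inductive step requires a careful repeated application of the pre-Lie identity together with the recursion defining $\{w_1,\dots,w_n\}v$; once this is in place, all other steps become essentially Sweedler-notation bookkeeping.
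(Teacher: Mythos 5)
The paper itself does not prove this theorem (it is quoted from Oudom--Guin \cite{go}), so your proposal must stand on its own; its overall architecture is indeed the standard Oudom--Guin one (symmetry of the braces, extension of $\triangleright$ to $k[V]\otimes k[V]$ by the coalgebra-morphism requirement, rewriting \eqref{astproduct} as $P\ast Q=\sum P_{(1)}\cdot(P_{(2)}\triangleright Q)$, then Sweedler bookkeeping). However, your second key lemma is false as stated, and it is the crux of the whole argument. You claim that $\triangleright$ is a left action of the \emph{commutative} algebra $(k[V],\cdot)$, i.e.\ $(PQ)\triangleright R=P\triangleright(Q\triangleright R)$. Already for $P=w_1$, $Q=w_2$, $R=v$ in $V$ this fails: by requirement (i) the left-hand side is $(w_1w_2)\triangleright v=\{w_1,w_2\}v=w_2\triangleright(w_1\triangleright v)-(w_2\triangleright w_1)\triangleright v$, whereas the right-hand side is $w_1\triangleright(w_2\triangleright v)$; by the pre-Lie identity these differ by the term $(w_1\triangleright w_2)\triangleright v$, which is nonzero in general. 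The correct statement, and the one actually needed to make your displayed associativity computation close up, is that $\triangleright$ is a left action of $(k[V],\ast)$ on $k[V]$:
$$
	\sum \bigl(P_{(1)}\,(P_{(2)}\triangleright Q)\bigr)\triangleright R \;=\; P\triangleright(Q\triangleright R),
	\qquad\text{i.e.}\qquad (P\ast Q)\triangleright R=P\triangleright(Q\triangleright R).
$$
This is a genuinely different (and subtler) assertion: it cannot be ``verified by combining coassociativity, cocommutativity and the symmetry of braces'' alone, but must be proved by induction, and it is exactly where the defining recursion of $\{w_1,\dots,w_n\}v$ in Definition~\ref{symetricbrace} enters. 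With the lemma corrected, the rest of your outline (unitality, the identification with $U(V,[-,-]_\triangleright)$ via the degree filtration and PBW, and the compatibility of $\Delta$ with $\ast$, which also uses that $A\triangleright x\in V$ for $x\in V$ so that braces of primitives are primitive) goes through as you describe.
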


Following \cite{EFP2014} we will provide an interpretation of the aforementioned result of Oudom--Guin in terms of chronological calculus. Note that the results in \cite{EFP2014} are stated in the more general framework of Rota--Baxter algebras. Assume from now on that the pre-Lie algebra $(V,\triangleright)$ is furthermore a shuffle algebra with the induced pre-Lie product \eqref{preLieShuf} -- for example, an algebra of time-dependent operators. Recall that the time-ordered product of two elements in $V$ is given by $T[v,w]:=v \prec w+w \prec v$. In the context of  time-dependent operators the product is obtained by symmetrising the causal product. In general, for $v_1,\ldots,v_n\in V$,
$$
	T[v_1,v_2,\dots ,v_n]:= \sum\limits_{\sigma\in S_n}
	v_{\sigma(1)} \prec (v_{\sigma(2)} \prec (\cdots  \prec (v_{\sigma(n-1)} \prec v_{\sigma(n)})\cdots )).
$$
The starting points for the following developments are:
\begin{itemize}

\item The observation that a pre-Lie algebra $V$ has in some sense more structure than a Lie algebra, which is reflected in the fact that its polynomial algebra, $k[V]$, is naturally equipped with two Hopf algebra structures (sharing the same underlying coproduct), i.e., the one associated to the associative product $\ast$ making $k[V]$ an enveloping algebra and the usual product of polynomials.

\item The fact that, since $V$ is a shuffle algebra, it is equipped with an associative product, denoted $\cdot$, whose bracket identifies with the Lie bracket $[ - , - ]_\triangleright$. By the universal properties of enveloping algebras, the identity map from $(V,[ - , - ]_\triangleright)$ to $(V,[-,-])$ induces a map of associative algebras, $\iota \colon (k[V]^+,\ast) \to (V,\cdot)$, where $k[V]^+$ stands for the augmentation ideal of $k[V]$ consisting of polynomials without constant terms.
\end{itemize}

In degree two we then have: 
\begin{align*}
	\iota(wv)
	=\iota(w\ast v)-\iota(\{w\}v)
	&=w\cdot v-w \triangleright v\\
	&=w \prec v+w \succ v - (w \succ v - v \prec w)\\
	&=w \prec v+v \prec w
	=T[w,v].
\end{align*} 
This is a general phenomenon (see \cite[p. 1291]{EFP2014} for a proof): 

\begin{thm} \label{thm:keyth}
The image in $V$ of a monomial $v_1 \cdots v_n$ in $k[V]$ by the canonical map $\iota$ is the time-ordered product of the $v_i$s:
 \begin{equation}
 \label{keyeq}
 	\iota (v_1\cdots v_n) = T[v_1,\dots,v_n].
 \end{equation}
\end{thm}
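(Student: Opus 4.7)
The plan is by induction on the degree $n$ of the monomial $v_1 \cdots v_n$. The case $n=1$ is trivial since $T[v_1]=v_1=\iota(v_1)$, and $n=2$ is precisely the computation displayed just before the statement.

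For the inductive step, I would unfold the left-most factor using the Oudom--Guin formula \eqref{astproduct}: for a single letter $v_1$ on the left, only the maps $f\colon\{1\}\to\{0,\ldots,n\}$ contribute, and those with $f(1)=0$ give the polynomial product while those with $f(1)=i\geq 1$ give $\{v_1\}v_i = v_1\triangleright v_i$ in slot $i$, namely
\begin{equation*}
v_1 \ast (v_2 \cdots v_n) = v_1 v_2 \cdots v_n + \sum_{i=2}^{n} v_2 \cdots (v_1 \triangleright v_i) \cdots v_n.
\end{equation*}
Applying the algebra morphism $\iota$ (with respect to $\ast$ and $\cdot$) and invoking the inductive hypothesis on each of the degree-$(n{-}1)$ monomials on the right-hand side, one obtains
\begin{equation*}
\iota(v_1 v_2 \cdots v_n) = v_1 \cdot T[v_2,\ldots, v_n] - \sum_{i=2}^{n} T[v_2,\ldots, v_1\triangleright v_i,\ldots, v_n].
\end{equation*}
The theorem is thus reduced to the shuffle-algebraic identity
\begin{equation*}
T[v_1,\ldots, v_n] = v_1\cdot T[v_2,\ldots, v_n] - \sum_{i=2}^{n} T[v_2,\ldots, v_1\triangleright v_i,\ldots, v_n]. \qquad(\star)
\end{equation*}

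I would prove $(\star)$ by a secondary induction on $n$, combining: the decomposition $T[v_1,\ldots, v_n]=\sum_{j=1}^{n} v_j \prec T[v_1,\ldots,\hat v_j,\ldots,v_n]$ obtained by grouping permutations according to their first letter; the splitting $v_1\cdot{-} = v_1\prec{-} + v_1\succ{-}$; the definition $v_1\triangleright v_i = v_1\succ v_i - v_i\prec v_1$; and the shuffle axioms \eqref{shuffle1}--\eqref{shuffle3}. The term $v_1 \prec T[v_2,\ldots,v_n]$ already accounts transparently for the permutations of $\{v_1,\ldots,v_n\}$ that start with $v_1$, so what remains is to verify that $v_1 \succ T[v_2,\ldots, v_n] - \sum_{i=2}^{n} T[v_2,\ldots,v_1\triangleright v_i,\ldots,v_n]$ reproduces precisely those permutations that do not start with $v_1$.

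The main obstacle is exactly this matching. One must propagate $v_1\succ$ past iterated right-associated $\prec$-chains via relation \eqref{shuffle2}, reshape the contributions $(v_i\prec v_1)\prec X$ arising from the $-\,v_i\prec v_1$ pieces using relation \eqref{shuffle1}, and show that these rearrangements, together with the secondary inductive hypothesis for $(\star)$ at degree $n-1$ applied to each $T[v_1,v_2,\ldots,\hat v_j,\ldots,v_n]$, produce exactly the missing permutations. This permutation-level bookkeeping, which ultimately is what forces the shuffle relations and the pre-Lie deformation $\triangleright$ to conspire into the enveloping-algebra product $\ast$, is the heart of the argument.
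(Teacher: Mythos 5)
Your proposal is correct. Note that the paper itself only displays the degree-two computation and defers the general case to \cite[p.~1291]{EFP2014}, so what you give is a self-contained proof obtained by iterating that degree-two calculation; your application of \eqref{astproduct} with a single letter on the left, and the resulting reduction of \eqref{keyeq} to the identity $(\star)$, are both exactly right. The ``main obstacle'' you isolate does close under the secondary induction, and it is worth recording why. Write $T_j := T[v_2,\dots,\widehat{v_j},\dots,v_n]$. Substituting the degree-$(n-1)$ case of $(\star)$ into each $T[v_1,v_2,\dots,\widehat{v_j},\dots,v_n]$ of the first-letter decomposition of the left-hand side, and decomposing each $T[v_2,\dots,v_1\triangleright v_i,\dots,v_n]$ on the right-hand side by its first letter as well, the terms $v_1\prec T[v_2,\dots,v_n]$ and the double sums $\sum_{j}\sum_{i\neq j} v_j\prec T[v_2,\dots,v_1\triangleright v_i,\dots,\widehat{v_j},\dots,v_n]$ cancel between the two sides, leaving precisely
\begin{equation*}
	\sum_{j=2}^n (v_j\prec v_1)\prec T_j \;=\; v_1\succ T[v_2,\dots,v_n]\;-\;\sum_{i=2}^n\bigl(v_1\succ v_i - v_i\prec v_1\bigr)\prec T_i ,
\end{equation*}
where on the left $v_j\prec(v_1\cdot T_j)=(v_j\prec v_1)\prec T_j$ by \eqref{shuffle1}. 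The $(v_i\prec v_1)\prec T_i$ terms now cancel, and the residual identity $v_1\succ T[v_2,\dots,v_n]=\sum_{i}(v_1\succ v_i)\prec T_i$ is exactly \eqref{shuffle2} applied to the first-letter decomposition $T[v_2,\dots,v_n]=\sum_i v_i\prec T_i$. Two minor points to make explicit in a final write-up: the secondary induction must start at $n=2$ (the empty time-ordered product is not available in this non-unital setting), and the primary induction has to be stated for monomials with arbitrary entries in $V$, since the degree-$(n-1)$ monomials $v_2\cdots(v_1\triangleright v_i)\cdots v_n$ involve the new letters $v_1\triangleright v_i$ --- which your formulation already accommodates.
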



\subsection{Formal flows and Hopf algebraic structures}
\label{ssect:flows}

There are two classical ways to study the formal properties of the flow associated to a differential equation, such as for instance the evolution operator $X(t)$ in the notation of Theorem \ref{thm:bch}, when dealing with the linear differential equation $\dot{X}(t)=X(t)H(t)$. By formal, we mean in this section that questions related to convergence  of series are not addressed, i.e., infinite sums are treated as formal sums. 

The first approach to study flows relies on classical group theory and was described earlier in this article in terms of the discrete version of the Baker--Campbell--Hausdorff formula (Theorem \ref{thm:bch} above). It expresses the logarithm of the product of two exponentials and, as such, the formal link between a continuous group and its Lie algebra in the neighbourhood of the identity. The two are locally in bijection through the logarithm and exponential map and this bijection can be understood formally by studying the structure of the (suitably completed) enveloping algebras of free Lie algebras. For detail, see for instance Reutenauer's monograph \cite{Reutenauer}, where this approach is developed.

The other way to express the link between the solution $X(t)$ (of $\dot{X}(t)=X(t)H(t)$) and $\Omega(t):=\log(X(t))$ is given by the Magnus formula \cite{Magnus}, which describes the latter in terms of a highly non-linear differential equation  
$$
	\dot{\Omega}(t)
	=\frac{ad_{\Omega}}{e^{ad_{\Omega}}-1}H(t)
	=H(t)+\sum\limits_{n>0}\frac{B_n}{n!}ad_{\Omega(t)}^n(H(t)).
$$
Here $ad$ denotes the adjoint representation, where $ad_{M}^n(N):=[M,ad_{M}^{n-1}(N)]$, $ad_{M}^0(N):=N$, and the $B_n$ are the Bernoulli numbers (the numbers $\frac{B_n}{n!}$ are the coefficients of the expansion of $\frac{x}{\exp(x)-1}$ as a formal power series). Its pre-Lie structure was investigated in \cite{EFM2} and follows from the observation that, for time-dependent operators, the pre-Lie product 
$$
	(M\triangleright N)(t)=\int_0^tdu [M(u),\dot{N}(u)]
$$
satisfies $\frac{d}{dt}(M\triangleright N)(t)=ad_{M(t)}\dot{N}(t)$. The Magnus formula reads then:
\begin{equation}
\label{naiveMagnus}
	\dot{\Omega}(t)
	=\frac{d}{dt}\left( \big\lbrace\frac{\Omega}{\exp(\Omega) -1} \big\rbrace H\right)(t)
\end{equation}
where $\frac{\Omega}{\exp(\Omega)-1}$ is computed using the associative product of the enveloping algebra of the pre-Lie algebra of time-dependent operators. 

We explain in this section how this kind of phenomenon can be accounted for. Our presentation is based on the seminal work of Agrachev and Gamkrelidze \cite{AG} and later developments by Chapoton and Livernet \cite{ChaLiv}, Guin and Oudom \cite{go}, Ebrahimi-Fard and Manchon \cite{EFM2} and Chapoton and Patras \cite{ChapPat}.

Recall first from \cite{ChaLiv} and \cite{go} the following result.

\begin{thm}\label{chaliv}
Let $B=(v_n)_{n\in I}$ be a basis of a vector space $L$. Then, non-planar rooted trees with vertices indexed by elements of the set $I$ form a basis of $V:=\mathcal{PL}(L)$, the free pre-Lie algebra over $V$. A non-planar rooted tree is interpreted recursively as an element of $V$ as follows: if $T$ is the tree obtained by attaching the subtrees $T_1,\dots, T_n$ (viewed as elements of $L$) to a root vertex indexed by $v_i$, then 
$$
	T=\{T_1\cdots T_n\}v_i.
$$
The pre-Lie product on $V$ is obtained as follows: given two trees $T,T'$, the pre-Lie product $T\triangleright T'$ is the sum of the trees obtained by summing over all graftings of the root of $T$ successively onto every vertex of $T'$. The set of forests over non-planar rooted trees forms a basis of the polynomial algebra $k[V]$. 
\end{thm}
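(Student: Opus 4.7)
The plan is to construct the pre-Lie algebra of labelled rooted trees directly and then verify the universal property. Let $V$ be the linear span of $I$-labelled non-planar rooted trees, and define on $V$ the grafting product $T\triangleright T'$ as the sum over all graftings of the root of $T$ onto a vertex of $T'$. The first step is to verify that $(V,\triangleright)$ is a pre-Lie algebra. Expanding $T\triangleright(T'\triangleright T'')-(T\triangleright T')\triangleright T''$, one splits the double-grafting terms according to whether the roots of $T$ and $T'$ land on distinct vertices of $T''$ or the root of $T$ lands on a vertex of the newly inserted copy of $T'$. The latter terms cancel exactly with the corresponding part of $(T\triangleright T')\triangleright T''$, and the former are manifestly symmetric in $T$ and $T'$, which yields the pre-Lie identity.

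Next, I would check by induction on the number of vertices that the recursive formula $T=\{T_1\cdots T_n\}v_i$ reproduces $T$ in $(V,\triangleright)$. The base case $T=v_i$ (a single vertex) is immediate. For the inductive step, unpacking Definition \ref{symetricbrace} shows that $\{T_1,\dots,T_n\}v_i$ expands as the sum of all trees obtained by grafting the subtrees $T_j$ in all possible orders and positions onto $v_i$; the correction terms $\{w_1,\dots,\{w_n\}w_i,\dots,w_{n-1}\}v$ in the brace recursion precisely remove the contributions in which some $T_j$ has been grafted onto another $T_k$ instead of onto the root, leaving only those configurations in which every $T_j$ is attached directly to the root. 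This gives $T$ up to the automorphisms of $T$ that are already accounted for in the non-planar identification.

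For the universal property, given a pre-Lie algebra $(W,\triangleright_W)$ and a map $\phi\colon I\to W$, define $\Phi\colon V\to W$ recursively by $\Phi(T)=\{\Phi(T_1)\cdots\Phi(T_n)\}\phi(v_i)$ using the brace built from $\triangleright_W$. Uniqueness is forced by the fact that $V$ is generated as a pre-Lie algebra (via the brace expansion) by the singletons $v_i$. The nontrivial point is that $\Phi$ respects $\triangleright$, i.e.\ $\Phi(T\triangleright T')=\Phi(T)\triangleright_W\Phi(T')$; this follows by induction on the total number of vertices, using the decomposition of graftings identified in the first step together with the pre-Lie identity in $W$. Linear independence of the trees in $V$ then follows because a free pre-Lie algebra with basis $B$ maps isomorphically onto any realisation in which distinct trees are linearly independent (e.g.\ one obtained by expanding iterated brace products in a sufficiently generic $W$).

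Finally, the statement that forests form a basis of $k[V]$ is not a new fact but a reformulation: $k[V]$ is the symmetric algebra on $V$, whose monomial basis is precisely the set of unordered products of trees, i.e.\ forests. This matches the Oudom--Guin theorem recalled just above, under which $(k[V],\ast)$ is the enveloping algebra of $(V,\triangleright)$ and the Poincar\'e--Birkhoff--Witt theorem identifies it as a coalgebra with $k[V]$ itself. The main obstacle in the whole argument is the bookkeeping of the cancellations in step one and the compatibility of $\Phi$ with $\triangleright$ in step three: in both cases one must carefully separate graftings that ``collide'' from those that are independent, and this is where the combinatorics of non-planar trees (as opposed to planar ones) intervenes crucially through the symmetry of the brace map.
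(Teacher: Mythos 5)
The paper offers no proof of Theorem~\ref{chaliv}: it is recalled from \cite{ChaLiv} and \cite{go} as a known result, so there is nothing in the text to compare your argument against line by line. Your proposal is a sound outline of the standard \emph{direct} proof (closer to the Oudom--Guin/Dzhumadil'daev--L\"ofwall verification than to the operadic argument of \cite{ChaLiv}): realise the candidate free object as the span of labelled rooted trees with the grafting product, observe that $T\triangleright(T'\triangleright T'')-(T\triangleright T')\triangleright T''$ reduces to the sum over independent graftings of the two roots onto vertices of $T''$ and is therefore symmetric in $T$ and $T'$, check the brace formula $T=\{T_1\cdots T_n\}v_i$ by the cancellation you describe, and then verify the universal property. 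All of this is correct in outline; one small imprecision is that the surviving symmetric terms are those where both roots land on vertices of $T''$, not necessarily \emph{distinct} ones. Linear independence of the trees then needs no separate argument, since $V$ is defined with the trees as a basis and the universal property identifies it with $\mathcal{PL}(L)$.

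The one substantive piece your sketch uses without isolating it is the \emph{symmetry of the brace map} of Definition~\ref{symetricbrace} in an arbitrary pre-Lie algebra $W$, i.e.\ that $\{w_1,\dots,w_n\}v$ does not depend on the ordering of the $w_i$. You need it twice: first so that $\Phi(T):=\{\Phi(T_1)\cdots\Phi(T_n)\}\phi(v_i)$ is even well defined (the branches of a non-planar tree carry no order), and again in the inductive proof that $\Phi$ is a morphism, where the identity $\{w\}(\{w_1,\dots,w_m\}v)=\{w,w_1,\dots,w_m\}v+\sum_k\{w_1,\dots,\{w\}w_k,\dots,w_m\}v$ is exactly the defining recursion of the brace \emph{provided} the brace is symmetric. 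That symmetry is where the pre-Lie identity of $W$ is actually consumed, and it requires its own induction (it is the key lemma of \cite{go}, see also \cite{lm}). Your closing sentence mentions ``the symmetry of the brace map'' but attributes it to the non-planarity of the trees on the source side, whereas the statement that needs proof concerns the target $W$. Supply that lemma and your argument closes.
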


\begin{rmk} 
A forest $F$ being a set $\{T_1,\ldots ,T_k\}$ of non-planar rooted trees is interpreted as the monomial $T_1\cdots T_k$ in $k[V]$. As a Hopf algebra, $(k[V],*,\Delta)$ is known as the Grossman--Larson Hopf algebra \cite{GL}.
\end{rmk}

From now on, $V$ denotes a free pre-Lie algebra equipped with a basis of non-planar rooted trees, as in the previous Theorem. Its enveloping algebra is the associated Grossman--Larson Hopf algebra $k[V]$. It is graded by the number of vertices of the trees in a forest. We denote by $\widehat{k[V]}$ its completion with respect to this grading, which is compatible with the two products on $k[V]$, i.e., the commutative product of monomials and the associative product of the enveloping algebra. 

Recall the following general properties of the completion $\hat H$ of a graded connected cocommutative Hopf algebra $H$ \cite{mm}. The vector space of its primitive elements, denoted $Prim(\hat H):=\{x \in \hat H,\ \Delta(x) = x \otimes 1 + 1 \otimes x\}$, is a Lie algebra for the Lie bracket associated to the product in $\hat H$. The set $G(\hat H):=\{x \in \hat H,\ \Delta(x) = x\otimes x\}$ of group-like elements forms instead a group for the product in $\hat H$. The exponential and logarithm maps define two inverse bijections between $Prim(\hat H)$ and $G(\hat H)$. Notice that $\widehat{k[V]}$ is such a Hopf algebra with respect to its two natural Hopf algebra laws.

The vector space, $Prim(\widehat{ k[V]})=\hat V$ is the completion of $V$ as a graded vector space. It carries two Lie algebra laws. The one associated to the commutative product that we write from now on $\circ$, is trivial (making it a commutative Lie algebra), the other one is the Lie bracket associated to the pre-Lie product on $V$.

As a set, the group $G(\widehat{k[V]})=: G(V)$ consists of exponentials of elements in $\hat V$ (using any of the two products). We denote by $\exp^\circ$, $\log^\circ$ the isomorphism between $\hat V$ and $G(V)$ coming from the commutative product ($\circ$) and by $\exp^\ast$, $\log^\ast$ the isomorphism defined in terms of the associative product ($\ast$). 

\begin{rmk} 
Notice that $\hat V$ is equipped with two inverse set automorphisms: $\log^\circ \circ \exp^\ast$ and $\log^\ast \circ \exp^\circ$. We are going to show that the second encodes the Magnus formula. The first one identifies with the map $W$ in Agrachev and Gamkrelidze \cite{AG}.
\end{rmk}

\begin{defn} 
Let us call 2-group (resp.~2-pre-Lie algebra) a set equipped with two group (resp.~pre-Lie) structures. The 2-group (resp.~2-pre-Lie algebra) of formal flows associated to $V$ is the group $(G(V),\circ,\ast)$ of group-like elements in $\widehat{k[V]}$ (resp.~the triple $(\hat V,0,\triangleleft)$, where $0$ stands for the null pre-Lie product). 
\end{defn}

Let us focus here on the application of this formalism to the Magnus formula.

\begin{defn}
Let $x \in \hat V$. The Magnus element $\Omega(x) \in \hat V$ is the (necessarily unique) solution $\Omega (x)$ to the equation:
$$
	\big\lbrace{\frac{\Omega (x)}{\exp(\Omega (x)) -1}}\big\rbrace x =\Omega(x).
$$
\end{defn}

Note that the unicity follows by induction from grading arguments. 

\ \par 

The link with the classical Magnus formula is as follows. Consider $\int_0^t ds H(s)$ in an algebra $A$ of time-dependent operators (for example matrices $M(t)$ with $M(0)=0$ and with bounded $C^\infty$ coefficients, to avoid the discussion of convergence phenomena). Define then $V$ to be the free pre-Lie algebra over a variable $x$. There is a canonical pre-Lie algebra map from $V$ to $A$ induced by the map $x\longmapsto \int_0^tdsH(s)$ as well as an induced  map from the Grossman--Larson algebra $k[V]$ to $A$. These two maps send $\Omega(x)$ to $\Omega(t)$.

\ \par 

Notice that, by Theorem \ref{thm:keyth}, the last map also sends $\exp^\circ (x)-1$ to $X(t)-1$, where $X(t)$ is the solution of $\dot{X}(t)=X(t)H(t)$, $X(0)=1$. Since $\Omega(t)=\log(X(t))$, we get that the images in $A$ of the two elements $\Omega(x)$ and $\log^\ast\circ\exp^\circ(x)$ in $\widehat{k[V]}$ coincide. In fact, more generally

\begin{thm}
The Magnus element $\Omega(x)$ identifies with $\log^\ast \circ \exp^\circ(x).$
\end{thm}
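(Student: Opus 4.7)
The plan is to identify $\Omega(x)$ with $\log^\ast\exp^\circ(x)$ in $\widehat{k[V]}$ via a universality argument, reducing the abstract identity to the classical continuous Magnus expansion for time-dependent operators. Both elements are built from $x$ using only pre-Lie-algebraic data: the brace operation and the scalar Bernoulli series $F(z)=\frac{z}{e^z-1}$ for $\Omega(x)$, and the Oudom--Guin product $\ast$ together with the polynomial product $\circ$ (both functorial in the pre-Lie structure) for $\log^\ast\exp^\circ(x)$. Consequently, any pre-Lie morphism $\phi\colon V\to W$ extends to a morphism $\hat\phi\colon\widehat{k[V]}\to\widehat{k[W]}$ that commutes with $\exp^\circ$, $\log^\ast$, the brace, and $F$. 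Hence both $\Omega(x)$ and $\log^\ast\exp^\circ(x)$ are natural in the choice of generator $x$ of $V=\mathcal{PL}(x)$, and it suffices to verify the equality after applying a rich enough family of such $\hat\phi$'s.

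Specialising to the shuffle algebra $A$ of sufficiently regular time-dependent operators, with $\hat\phi(x)=\int_{0}^{t}H(s)\,ds$ for a smooth operator-valued $H$, Theorem~\ref{thm:keyth} gives $\hat\phi(\exp^\circ(x))=X(t)$, the Picard--Dyson solution of $\dot X(t)=X(t)H(t)$, $X(0)=1$. Therefore $\hat\phi(\log^\ast\exp^\circ(x))=\log X(t)=\Omega(t)$, the classical Magnus expansion. On the other hand, by the pre-Lie reformulation of the Magnus equation worked out in \cite{EFM2}, this $\Omega(t)$ satisfies exactly the $\hat\phi$-image of the defining equation of the abstract Magnus element; by the uniqueness part of that definition applied to the graded completion $\widehat{k[A]}$, $\hat\phi(\Omega(x))$ is equal to $\Omega(t)$ as well. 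Thus the two elements of $\widehat{k[V]}$ have equal image under every such $\hat\phi$.

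To conclude, I would promote this fibrewise equality to an equality in $\widehat{k[V]}$ itself. Since $V=\mathcal{PL}(x)$ is free on one generator, varying $H$ yields a family of pre-Lie morphisms that jointly separates elements of each (finite-dimensional) graded component of $\widehat{k[V]}$; concretely, choosing $H(t)$ to be a time-dependent matrix with sufficiently generic smooth coefficients realises the non-planar rooted tree basis of $\mathcal{PL}(x)$ given by Theorem~\ref{chaliv} as linearly independent operator-valued iterated integrals in $A$, so a single such $\hat\phi$ is already injective in each degree. The main technical difficulty I expect lies precisely in this separation step; once it is in place, the coincidence of images across all $\hat\phi$ forces $\Omega(x)=\log^\ast\exp^\circ(x)$ in $\widehat{k[V]}$, as claimed.
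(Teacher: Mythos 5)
Your strategy differs from the paper's. The paper proves the theorem by a direct computation inside $\widehat{k[V]}$: writing $\widehat{k[V]}=k\oplus V\oplus \widehat{k[V]}_{\geq 2}$, using $\widehat{k[V]}\ast\widehat{k[V]}_{\geq 2}\subset\widehat{k[V]}_{\geq 2}$ and $y\ast x=\{y\}x+o(1)$, it shows that $\log^\ast(\exp^\circ(x))$ satisfies the defining fixpoint equation of the Magnus element, and concludes by the uniqueness of its solution (induction on the grading). Your steps involving Theorem~\ref{thm:keyth} and the classical Magnus expansion are essentially what the paper states \emph{before} the theorem, as motivation: they yield only that the \emph{images in $A$} of $\Omega(x)$ and $\log^\ast\circ\exp^\circ(x)$ coincide, which is why the paper then says ``In fact, more generally'' and gives the intrinsic argument.

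The genuine gap is the separation step, which you correctly flag but do not supply, and which is not a routine verification. Since both elements are primitive, what you need is that the pre-Lie morphisms $\mathcal{PL}(x)\to A$, $x\mapsto\int_0^t H(s)\,ds$, jointly (or for a single generic $H$) restrict to an injection on each graded component of $\hat V$, i.e.\ that the rooted-tree monomials of Theorem~\ref{chaliv} map to linearly independent operator-valued iterated integrals under the induced pre-Lie product $(M\triangleright N)(t)=\int_0^t[M(s),\dot N(s)]\,ds$. This amounts to showing that the sub-pre-Lie algebra of $A$ generated by a generic $\int_0^t H(s)\,ds$ is free, a statement of the same flavour as (and resting on results comparable to) Foissy's freeness theorems for shuffle and brace algebras, which the paper itself describes as ``rather involved''. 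The dimension count (rooted trees: $1,1,2,4,9,\dots$) versus the available iterated integrals gives no obstruction, but genericity of $H$ alone is not an argument; one would need, e.g., the freeness of the brace/pre-Lie structure on simplices together with faithfulness of the integral representation. Until that is established, your argument only reproves the weaker statement about images in $A$. A secondary, fixable issue: your appeal to ``the uniqueness part of that definition applied to $\widehat{k[A]}$'' is not literally available, since $A$ carries no grading for which the inductive uniqueness argument applies; one must instead work with a formal parameter or cite the uniqueness of the classical Magnus series directly.
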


\begin{proof}
Indeed,
we have:
$$
	\log^\ast(\exp^\circ(x))=\sum_{n>0}(-1)^{n-1}\frac{(\exp^\circ(x)-1)^{\ast n}}{n}.
$$
Let us write $u=v+o(1)$, meaning that $u$ and $v$ in $\widehat{k[V]}$ are equal up to a formal linear combination of forests in $\widehat{k[V]}$ with at least two trees. Denoting $\widehat{k[V]}_{\geq 2}$ the latter space, we have in particular $\widehat{k[V]} = k \oplus V \oplus \widehat{k[V]}_{\geq 2}$. Notice that, by definition of the $\ast$-product, 
$$
	\widehat{k[V]}\ast \widehat{k[V]}_{\geq 2} \subset \widehat{k[V]}_{\geq 2}.
$$
We then have: 
\begin{align*}
	\log^\ast(\exp^\circ(x))
	&= \left(\sum_{n>0}(-1)^{n-1}\frac{(\exp^\circ(x)-1)^{\ast n-1}}{n}\right)\ast \big(x+o(1)\big)\\
	&= \left(\sum_{n>0}(-1)^{n-1}\frac{(\exp^\circ(x)-1)^{\ast n-1}}{n}\right)\ast x+o(1).
\end{align*}
Recall now that for any $y \in \widehat{k[V]}$, $y \ast x= \{y\}x + o(1)$. Since $\log^\ast(\exp^\circ(x)) \in V$, and for $v,w\in V$, $(v=w+o(1))\implies (v=w)$, we obtain finally
$$
	\log^\ast(\exp^\circ(x))
	=\big\lbrace\sum_n(-1)^{n-1}\frac{(\exp^\circ(x)-1)^{\ast n-1}}{n}\big\rbrace x
	=\big\lbrace\frac{\log^\ast(\exp^\circ(x))}{\exp^\circ(x)-1}\big\rbrace x,
$$
from which the statement of the theorem follows.
\end{proof}

\newpage


\section{Rota--Baxter Algebras}
\label{sect:RBA}

The mathematician Gian-Carlo Rota once lamented in a 1998 paper \cite{Rota4} that: 
\begin{quote} 
``Whereas algebraists have devoted a lot of attention to derivations, the algebraic theory of the indefinite integral has been strangely neglected. The shuffle identities are only the tip of an iceberg of algebra and combinatorics of the indefinite integral operator which remains unexplored. The profound work of K.~T.~Chen bears witness to the importance of the algebra of the indefinite integral. J.~Milnor once told me that he considers K.~T.~Chen's work to be some of the most important and most neglected mathematics of the latter half of this century. However, not even Milnor's opinion has so far made a dent into the uncontrollable forces of fashion and fad''. (G.C.~Rota \cite{Rota4}) 
\end{quote}

Rota refers above to the unital associative shuffle algebra with its commutative shuffle product, which plays a critical role in Chen's seminal works \cite{Chen57,Chen71} (see also Ree's article \cite{Ree58}). As we know from the previous sections, the shuffle product encodes indeed in abstract algebraic terms the classical integration by parts rule and its extension to the product of iterated integrals. As we will see in this section, the second part of this survey, the notion of Rota--Baxter algebra defined through the Rota--Baxter relation provides a far reaching generalisation Chen's shuffle algebra.  

\smallskip

In the following we will outline the main algebraic aspects of the contemporary theory of Rota--Baxter algebras. We should remark though that we refrain from giving an exhaustive account. Let us emphasise the underlying aim. Rota--Baxter algebra is representative for a whole class of theories, including iterated integrals of scalar and operator valued functions -- corresponding to commutative and non-commutative shuffle products -- as well as summation and projection operators -- corresponding to commutative and non-commutative quasi-shuffle products. The main distinction between the commutative and the non-commutative realm is encoded in the presence of pre-Lie and post-Lie products. Furthermore, since the axioms of Rota--Baxter algebra encode more general ``integral-type operators'' than the Riemann integral of chronological calculus, the identities that are obtained in Rota--Baxter algebra have a much wider range of applications.


\subsection{Origin}
\label{ssect:Origin}

The Rota--Baxter identity and together with it the notion of (commutative) Rota--Baxter algebra first appeared in the 1960 work of the American mathematician Glen Baxter\footnote{Note that this Baxter should not be confused with the Australian theoretical physicist Rodney Baxter -- the latter is known, among others for the (quantum) Yang--Baxter equation.}~\cite{Baxter}. Baxter's paper was motivated by a fundamental result in probability theory due to Frank Spitzer \cite{Spitzer}, on which we will comment further below. Various proofs of the so-called Spitzer identity had been obtained before. Baxter's approach, however, which is based on the identity that bears his name, succeeded in unveiling the algebraic and combinatorial structures underlying Spitzer's identity.

About ten years later, Gian-Carlo Rota realised that Spitzer's identity can be deduced using classical results from the theory of symmetric functions \cite{Rota1,Rota2,Rota3}. Indeed, Rota showed that Spitzer's identity is \it equivalent \rm to the Waring identity. Since the algebra of symmetric functions is naturally equipped with a Hopf algebra structure, Rota's observation could have paved the way to the use of Hopf algebra techniques in the theory of Rota--Baxter algebras. However, this does not seem to have been the case: Hopf algebra structures were not put forward in the classical theory of Rota--Baxter algebras and their applications -- as far as we know. Here we will indicate briefly how some classical properties can be given Hopf-theoretic interpretations. We will also build a connection with the results in the first part of this article, where Hopf algebra structures emerged in relation to iterated integrals and Chen's rule.

Beside the links with the theory of symmetric functions, Rota and his school advocated the field of algebraic combinatorics, both mathematically and on epistemological grounds -- an attitude that can be recognised in his introduction to reference \cite{Rota1}:

\begin{quote} 
``The spectacular results in the fluctuation theory of sums of independent random variables [...] have gradually led to the realisation that the nature of the problem, as well as that of the methods of solution, is algebraic and combinatorial [...]. It is the present purpose to carry this algebraisation to the limit: the result we present amounts to a solution of the word problem for Baxter algebras''. (G.C.~Rota \cite{Rota1})
\end{quote}

Another essential component of the mathematical foundation of the theory of Rota--Baxter algebras can be traced back to Pierre Cartier's 1972 paper \cite{Cartier} on the structure of free commutative Rota--Baxter algebras. Similarly to Rota, Cartier considered the word problem, which amounts to constructing explicitly a basis of the free commutative Rota--Baxter algebra. However, he suggested a rather different approach which lead him to the notion of commutative quasi-shuffle product. The latter generalises Chen's classical shuffle product \cite{Chen57,Chen71} that was studied in the previous sections of the article. As we shall see, this extension from shuffles to quasi-shuffles is one of the (many) reasons to work in the axiomatic framework of Rota--Baxter algebra.

Cartier's algebra product appeared independently a few years later in a 1979 paper by Kenneth Newman and David E.~Radford \cite{NewRad}, where the authors endow the free coalgebra over an associative algebra with a Hopf algebra structure. A quarter of a century later, Jessica Gaines \cite{Gaines} introduced a recursive definition of the quasi-shuffle product -- similar to Cartier's -- in the context of the multiplication of iterated stochastic integrals in the case of Wiener processes. After Gaines, C.W.~Li  and X.Q.~Liu \cite{LiLiu} considered Gaines' quasi-shuffle product while studying the algebraic structure of multiple stochastic integrals with respect to Brownian motion and standard Poisson processes. Around the same time as Gaines' work, Robin Hudson and collaborators described a combinatorial formula for the product of iterated quantum stochastic integrals \cite{Beasley,HudPat}. 
This product was coined sticky-shuffle \cite{Hudson_2} and is equivalent to Gaines' quasi-shuffle product. In the light of the Hopf algebraic perspective alluded to earlier, we should also mention that Hudson described the sticky-shuffle (or quasi-shuffle) Hopf algebra in \cite{Hudson_1}. Eventually, and independently of the aforementioned developments, the notion of quasi-shuffle product was formalised by Michael Hoffman in \cite{Hoffman} by embedding it in a Hopf algebraic framework and detailing its relation with Chen's shuffle product. See also references \cite{foissy2,foissy3,HoffmanIhara16}. We remark in passing that Hoffman's main motivation came from understanding algebraic and combinatorial structures underlying multiple zeta values (MZVs) \cite{CartierMZV}. This rough (and likely incomplete) synopsis of the history of the commutative quasi-shuffle product suggests that Rota was right about the iceberg of algebra and combinatorics that comes with the notion of commutative shuffle product. Indeed, we will see that the latter is an abelian example of a special case of a larger structure known as non-commutative quasi-shuffle algebra.  

The central aim of this section of the survey will be indeed to advocate the non-commutative theory of Rota--Baxter algebras. In spite of multiple good reasons, theoretical as well as applied ones (e.g., integration and finite difference calculus with respect to operator algebra valued functions), interest in non-commutative Rota--Baxter algebra has been for long rather sporadic, compared to the amount of work that erupted shortly after Baxter's paper came out. This changed only in the last 15 years, starting largely with the three papers \cite{Aguiar,EF2002,GK} that (re)initiated interest for the algebraic structures and properties of (non-)commutative Rota--Baxter algebra. Since then, the theory has grown in both systematic developments as well as applications.

The momentum behind this interest stems initially from the use of non-commutative Rota--Baxter algebra in the context of renormalisation in perturbative quantum field theory. Recall that the renormalisation program permits to give sense to integrals appearing in perturbative computations in quantum field theory, which are otherwise divergent. The main idea is to identify and then extract and eliminate those parts that cause the divergencies in a coherent way, which is moreover compatible with the underlying physics \cite{CasKen,collins}. The extraction and elimination part, i.e., the renormalisation process, is combinatorial in nature. It was essentially understood and described by Bogoliubov and others (better known as Bogoliubov--Parasiuk--Hepp--Zimmermann (BPHZ) renormalisation method \cite{collins}), and has been known in physics for a long time\footnote{In fact, we remark that Bogoliubov's and collaborators' original article \cite{BoPa} appeared around the same time as Magnus' \cite{Magnus} and Spitzer's \cite{Spitzer} work}. See \cite{Bogo} for one of the earliest textbooks on the subject. The theoretical framework underlying renormalisation plays a fundamental role in the so-called standard model, and the spectacular results in particle physics that followed from it. Let us remark that from a mathematical point of view\footnote{Despite the fact that other approaches to the renormalisation problem, like, for instance, the Epstein--Glaser method \cite{EG} or Wilson's renormalisation group \cite{Wilson}, allow for both a comprehensive understanding of the origin of divergencies as well as the very nature of the renormalisation program.} Bogoliubov's subtraction scheme still carries a veil of mystery.              

In the late 1990s, Connes and Kreimer came up with a rather unexpected approach to the BPHZ renormalisation method in terms of a combinatorial Hopf algebra of Feynman graphs \cite{CK1,Kreimer}. In this approach, the recursive subtraction algorithm of Bogoliubov et al.~is considered in the context of dimensional regularisation together with minimal subtraction. It was understood that the subtraction scheme map has to satisfy a particular set of identities; it was pointed out by C.~Brouder that these identities could be subsumed into a single relation, which was henceforth called {\it{multiplicativity constraint}} \cite{KreimerC}. A few years later it was realised by the first author of this article, that the identity proposed by Brouder coincides with the Rota--Baxter identity (of weight $\theta=1$). Subsequent works clarified the role of the Rota--Baxter relation in the aforementioned work of Connes and Kreimer and allowed to relate their results to structural properties of Rota--Baxter algebras such as Atkinson's decomposition \cite{Atkinson}. We refer the reader interested in details to Manchon's article in \it La Gazette des math\'ematiciens \rm\cite{Manchon1}, where a concise and elegant account of the algebraic structures involved in the Connes--Kreimer theory, including the Rota--Baxter algebra aspect, is presented.      

For what considers this presentation, two aspects are of crucial importance. On the one hand, Bogoliubov's recursive algorithm and the algebraic Birkhoff decomposition derived by Connes and Kreimer are an instance of a general result, known as Atkinson's factorisation, which is valid in any Rota--Baxter algebra. On the other hand, the work of Connes and Kreimer triggered considerable research into what is nowadays known as the theory of combinatorial Hopf algebras as well as into the theory of pre-Lie algebras. This lead also to a renewed interest in non-commutative Rota--Baxter algebras.
 
\smallskip
 
The rest of this work is organised as follows. In the following section we recall the definition of Rota--Baxter algebra together with related algebraic structures of Lie theoretic type. Then Atkinson's factorisation for Rota--Baxter algebras is presented, followed by the classical, i.e., commutative Spitzer and Bohnenblust--Spitzer identities. In subsection \ref{ssect:ncCartier-Rota} we describe briefly Cartier's and Rota's constructions of free commutative Rota--Baxter algebras. The generalisation of Spitzer-type formulas to non-commutative Rota--Baxter algebras is presented in subsection \ref{ssect:ncSpitzer}. In this context, the notion of pre-Lie algebra is central. We indicate how for non-commutative Rota--Baxter algebras the theory of non-commutative symmetric functions plays a role analogue to that of symmetric functions for commutative Rota--Baxter algebras described by Rota.


\subsection{Definition and examples}
\label{ssect:Defalg}

\begin{defn}\label{def:RBA} 
A {\rm{Rota--Baxter algebra of weight $\theta \in k$}} consists of an associative $k$-algebra $A$ equipped with a linear operator $R \colon A \to A$ satisfying the {\rm{Rota--Baxter relation of weight $\theta$}}:
\begin{equation}
\label{RBR}
	R(x)R(y)=R\big(R(x)y+xR(y) - \theta xy \big) \qquad \forall x,y \in A.
\end{equation}
\end{defn}
A Rota--Baxter algebra is said to be commutative if it is commutative as an associative algebra. Note that if $R$ is a Rota--Baxter map of weight $\theta$, then the map $R':=\beta R$ for $\beta \in k$ different from zero is of weight $\beta \theta$. This permits to rescale the original weight  $\theta \neq 0$ to the new $\theta' = -1$ (or $\theta' = +1$). Rota--Baxter maps come in pairs, that is, the linear map 
$$
	\tilde{R}:= \theta Id - R
$$ 
satisfies as well the Rota--Baxter relation of weight $\theta$ \eqref{RBR}. Here $Id$ denotes the identity map on $A$. 

\begin{lem}
Let $(A,R)$ be a Rota--Baxter algebra of weight $\theta$, the pair $(A,\tilde{R})$ is also a weight $\theta$ Rota--Baxter algebra.
\end{lem}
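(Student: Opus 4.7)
The plan is to verify the Rota--Baxter relation for $\tilde{R}$ by direct expansion, using only the definition $\tilde{R} = \theta\,\mathrm{Id} - R$ together with the Rota--Baxter relation \eqref{RBR} satisfied by $R$. There is no combinatorial or structural obstacle here; the statement is an algebraic identity and the shortest route is to compute both sides of
\[
    \tilde{R}(x)\tilde{R}(y) \stackrel{?}{=} \tilde{R}\bigl(\tilde{R}(x)y + x\tilde{R}(y) - \theta xy\bigr)
\]
and compare them.

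First I would expand the left-hand side: $\tilde{R}(x)\tilde{R}(y) = (\theta x - R(x))(\theta y - R(y)) = \theta^2 xy - \theta\, xR(y) - \theta\, R(x)y + R(x)R(y)$, and then substitute $R(x)R(y) = R(R(x)y + xR(y) - \theta xy)$ via \eqref{RBR}. Next I would simplify the argument of $\tilde{R}$ on the right-hand side: a direct calculation gives
\[
    \tilde{R}(x)y + x\tilde{R}(y) - \theta xy = \theta xy - R(x)y - xR(y),
\]
after which applying $\tilde{R} = \theta\,\mathrm{Id} - R$ and using linearity of $R$ yields $\theta^2 xy - \theta R(x)y - \theta xR(y) - \theta R(xy) + R(R(x)y) + R(xR(y))$. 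Comparing this with the expression obtained for the left-hand side shows the two agree term by term.

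The only ``obstacle'' worth flagging is bookkeeping: the cross term $-\theta R(xy)$ produced on the right-hand side must match the corresponding $-\theta xy$ correction hidden inside $R(R(x)y + xR(y) - \theta xy)$ on the left-hand side; this cancellation is the whole content of the computation, and it reflects the involutive symmetry $R \leftrightarrow \tilde{R}$ of \eqref{RBR}. Once noted, the proof is a one-line verification, so in the write-up I would simply display the two expansions and conclude.
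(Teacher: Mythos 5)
Your proposal is correct: the expansion of both sides is accurate and the two expressions indeed agree term by term. The paper gives no written proof (it simply asserts the lemma and leaves the neighbouring identities \eqref{mixedRBR} to the reader), and your direct verification via $\tilde{R}=\theta\,\mathrm{Id}-R$ and \eqref{RBR} is exactly the intended argument.
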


We leave it to the reader to check that 
\begin{equation}
\label{mixedRBR}
	R(x)\tilde{R}(y) = R\big(x \tilde{R}(y)\big) + \tilde{R}\big(R(x)y\big)
	\quad \mathrm{and} \quad
	\tilde{R}(x)R(y) = \tilde{R}\big(x R(y)\big) + R\big(\tilde{R}(x)y\big).
\end{equation}	

\begin{rmk}
Rota--Baxter algebras do not necessarily have a unit as associative algebras. When they don't, we augment them with a unit as follows. Let $B$ be a Rota--Baxter algebra over a field $k$, we write $\overline B$ for the augmented unital algebra $k\oplus B$ (with augmentation the projection onto $k$ orthogonally to $B$) and call it the augmented Rota--Baxter algebra (of $B$). However, we \it do not \rm extend in general the operator $R$ to $\overline B$, so that $R$ is then defined only on $B$.
\end{rmk}

We consider first some examples by starting with the case when the weight $\theta$ equals zero and the algebra $A$ is commutative. The most natural interpretation of a commutative weight zero Rota--Baxter map is given in terms of the integration by parts relation for the indefinite Riemann integral. Indeed, take for example $A$ to be the algebra $C^\infty(\RR)$ of smooth functions on the real line and set $R(f)(t):=\int_0^t f(x)dx$. For two functions $f,g \in C^\infty(\RR)$ we obtain the weight zero Rota--Baxter identity:
\begin{equation}
\label{RBR0}
	R(f)R(g)=R\big(R(f)g+fR(g)\big).
\end{equation}
Even in this somewhat degenerate case, the theory is not without interest since it embraces, among other aspects, Chen's commutative iterated integral calculus, Sch\"utzenberger's theory of commutative shuffle algebras \cite{schutz} as well as a large part of Fliess' approach to control theory \cite{Fliess} (related to chronological algebras in the sense of \cite{Kawski}). Indeed, for $R$ being the integral operator, we recognise in $R(f_1R(f_2 \cdots R(f_{n}) \cdots ))$ the iterated integral of functions $f_1, \ldots ,f_n$. 

Recall that the shuffle product $(f_1,\dots ,f_n)\shuffle (g_1,\dots ,g_m)$ of two \it sequences \rm $(f_1,\ldots,f_n)$ and $(g_1,\dots ,g_m)$ is the multiset of sequences $(h_1, \ldots ,h_{m+n})$ made of the letters $f_i$ and $g_j$ where the partial orders of the $f_i$ and of the $g_j$ are preserved. For example, $(f_1,g_1,f_2,g_2,f_3,g_3,f_4)$ or $(f_1,f_2,g_1,g_2,f_3,f_4,g_3)$ are among the shuffles of $(f_1,f_2,f_3,f_4)$ and $(g_1,g_2,g_3)$. However, $(f_1,f_4,f_2,g_1,g_2,f_3,g_3)$ is not, since $f_4$ appears on the left of $f_2$ and $f_3$. For example,  
\begin{align*}
	(f_1,f_2)\shuffle(g_1,g_2)
	&=(f_1,f_2,g_1,g_2)+(f_1,g_1,f_2,g_2)+(f_1,g_1,g_2,f_2)\\
	& \quad +(g_1,g_2,f_1,f_2)+(g_1,f_1,g_2,f_2)+(g_1,f_1,f_2,g_2).
\end{align*}
When letters are repeated, sequences are counted with multiplicities: 
$$
	(f,g)\shuffle (f)=\{(f,f,g), (f,f,g),(f,g,f)\}.
$$

From \eqref{RBR0} follows 
\begin{lem}
In a weight zero Rota--Baxter algebra, we have:
\begin{eqnarray}
\label{shuffle}
	\lefteqn{R\big(f_1R(f_2 \cdots R(f_{n}) \cdots )\big)R\big(g_1R(g_2 \cdots R(g_m) \cdots )\big)} \nonumber\\
	&\hspace{6cm} =\sum\limits_{(h_1, \ldots ,h_{m+n})\in\atop (f_1,\dots ,f_n)\shuffle (g_1,\dots ,g_m)
	} R\big(h_1R(h_2 \cdots R(h_{n+m}) \cdots )\big).
\end{eqnarray}
\end{lem}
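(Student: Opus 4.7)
The plan is to prove the identity by induction on $N := n+m$, using the weight zero Rota--Baxter relation~\eqref{RBR0} together with commutativity of $A$ to bring the induction hypothesis to bear at each step.

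For the base case $n=m=1$, the identity reduces to $R(f_1)R(g_1)=R(f_1R(g_1))+R(R(f_1)g_1)$, which is exactly~\eqref{RBR0}. (Degenerate cases where $n=0$ or $m=0$, if admitted, are trivial since shuffling with the empty sequence is the identity.)

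For the inductive step, write $P:=R(f_1\cdot A)$ and $Q:=R(g_1\cdot B)$, where $A:=R(f_2\cdots R(f_n)\cdots)$ and $B:=R(g_2\cdots R(g_m)\cdots)$. Applying~\eqref{RBR0} to $P\cdot Q$ yields
\begin{equation*}
	P\cdot Q \;=\; R\bigl(f_1\, A \cdot R(g_1 B)\bigr) + R\bigl(R(f_1 A)\cdot g_1\, B\bigr).
\end{equation*}
In the first term, use associativity to regroup as $R\bigl(f_1\cdot(A\cdot Q)\bigr)$. Since $A$ and $Q$ are both of the form $R(\cdot)$, the inductive hypothesis applied to $A\cdot Q$ (of total length $(n-1)+m<N$) rewrites it as a sum of $R$-iterations over the shuffles of $(f_2,\dots,f_n)$ with $(g_1,\dots,g_m)$. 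Prepending $f_1$ inside the outer $R$ produces exactly those shuffles of $(f_1,\dots,f_n)$ with $(g_1,\dots,g_m)$ whose first letter is $f_1$.

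The second term is handled symmetrically, but here commutativity of $A$ intervenes crucially: we rewrite $R(f_1 A)\cdot g_1\cdot B = g_1\cdot\bigl(P\cdot B\bigr)$, so that the induction hypothesis applies to $P\cdot B$ (of total length $n+(m-1)<N$), giving the shuffles of $(f_1,\dots,f_n)$ with $(g_2,\dots,g_m)$; prepending $g_1$ inside the outer $R$ yields the shuffles of $(f_1,\dots,f_n)$ with $(g_1,\dots,g_m)$ whose first letter is $g_1$. Since every shuffle begins with either $f_1$ or $g_1$, the two contributions partition the right-hand side of~\eqref{shuffle}, completing the induction.

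The main subtlety, and the only place where commutativity is genuinely needed, is the rearrangement $R(f_1 A)\cdot g_1\cdot B = g_1\cdot R(f_1 A)\cdot B$ in the second term, which brings the expression into a form where the inductive hypothesis on products of two $R$-iterated integrals can be applied. Without commutativity one cannot pull $g_1$ to the left past the already Rota--Baxter-applied factor $R(f_1 A)$, and indeed the corresponding non-commutative statement requires the richer machinery of the (non-commutative) shuffle product discussed in Section~\ref{ssect:chonocalc}, rather than the plain commutative shuffle $\shuffle$.
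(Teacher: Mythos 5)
Your induction is correct and is exactly the argument the paper leaves implicit when it says the lemma ``follows from \eqref{RBR0}'': one application of the weight-zero relation splits the product according to whether the leading letter is $f_1$ or $g_1$, and commutativity lets you pull $g_1$ past $R(f_1A)$ so that the inductive hypothesis applies to both resulting products. You are also right to flag that commutativity is genuinely used there --- the lemma sits in the paragraph where the paper has restricted to commutative weight-zero Rota--Baxter algebras, and the paper itself later remarks that these identities ``are valid only in the commutative case.''
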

Relation (\ref{shuffle}) yields in particular the well known shuffle product of two iterated integrals due to Chen. 

The case of a non-commutative weight zero Rota--Baxter algebra includes the algebras of chronological calculus (algebras of time-dependent operators stable under a suitable integral map satisfying the integration by parts formula). We will return to this particular case in a specific subsection below, making explicit the links between the formalism of Rota--Baxter algebras and chronological calculus \`a la Agrachev--Gamkrelidze.

\smallskip

Next we should address the obvious question whether there are interesting examples of non-zero weight Rota--Baxter maps? From the point of view of analysis and for the theory of discrete dynamical systems, the fundamental example of non-zero weight Rota--Baxter maps are summation operators. 

\begin{exam}[Finite summation operators]
On functions $f$ defined on $\mathbb{N}$ and with values in an associative algebra $A$, the summation operator $R(f)(n):=\sum_{k=0}^{n-1}f(k)$ is a Rota--Baxter map of weight $\theta = -1$. It is the right inverse of the finite difference operator $\Delta(f)(n):=f(n+1) - f(n)$.       
\end{exam}

Another example traces the Rota--Baxter relation back to its origin in classical fluctuation theory. The latter deals with extrema of sequences of real valued random variables. Their distribution can be studied using operators on random variables such as $X\to \max(0,X)$. This motivates the

\begin{exam}[Fluctuation theory]
The operator $R$ is defined on the characteristic function $F(t):=\EE[\exp({itX})]$ of a real valued random variable $X$:
$$
 	R(F)(t):=\EE[\exp({itX^+})],
$$
where $X^+:=\max(0,X)$. One can show that $R$ is indeed a Rota--Baxter map of weight $\theta = 1$. This example motivated Spitzer's and hence Baxter's original works \cite{Baxter,KuRoYa,Rota1,Spitzer}. We will return to it further below. 
\end{exam}     

\begin{exam}[Split algebras]
Another, more algebraic, example derives from an associative algebra $A$ (not necessarily commutative) which decomposes into a direct sum of two (non-unital) subalgebras, $A=A^+\oplus A^-$. The two orthogonal projectors $\pi_\pm: A \to A^\pm$ are Rota--Baxter maps of weight $\theta=1$. In fact, any projector which satisfies relation (\ref{RBR}) is of weight $\theta=1$. A rather well known example for such an algebra are Laurent series $\CC[\epsilon^{-1},\epsilon]]$, which decompose into a ``divergent'' part $\epsilon^{-1}\CC[\epsilon^{-1}]$ and a ``regular'' part $\CC[[\epsilon]]$. Further below we will comment on its role in the modern approach to renormalisation in perturbative quantum field theory due to Alain Connes and Dirk Kreimer \cite{CK1}. 
\end{exam}

\begin{exam}[Matrix algebras]
An interesting class of examples is provided by $n \times n$ matrices together with certain projectors. We denote by $\pi_+$ the projector that maps a matrix $\alpha$ to the upper triangular matrix $\alpha^+$ defined by replacing all entries in $\alpha$ below the main diagonal by zeros. The projector $\pi_-:= Id -\pi_+$ maps $\alpha$ to $\pi_-(\alpha)=\alpha^-$, the corresponding strictly lower triangular matrix. It is not hard to see that $\pi_\pm$ satisfy \eqref{RBR} for weight $\theta=1$. Note that the projector defined by mapping $\alpha$ to the difference $\alpha^- - (\alpha^-)^\dagger$, where $(\alpha^-)^\dagger$ is transpose of $\alpha^-$, satisfies the forthcoming relation \eqref{RBRLie}, but not \eqref{RBR}. It therefore defines a Lie Rota--Baxter algebra structure (to be defined below) on the $n \times n$ matrices -- corresponding to the $QR$-factorisation of matrices.         
\end{exam}

Various of these examples are commutative Rota--Baxter algebras with a non-zero weight. It is interesting to see, as a warm up for the forthcoming developments, how the classical product formulas for iterated integrals transform to take into account the weight.

The product formula in \eqref{shuffle} holds indeed only in the commutative weight zero case: it is already wrong when computing $R(x)R(y)$. Indeed, looking closer at the weight $\theta$ Rota--Baxter relation \eqref{RBR} reveals that it has a shuffle part \eqref{shuffle} and an additional $\theta$-weight part. In the following example we separate the parts coming from ``pure'' shuffling from the additional components that result from the weight $\theta$ term using brackets to identify the latter.        
$$
	R(x)R(y)=R(xR(y))+R(yR(x)) - [\theta R(xy)], 
$$
\begin{align*}
	R(x)R(yR(z)) 	&= R(xR(yR(z)))+R(yR(xR(z)))+R(yR(zR(x)))\\
				& \qquad\ - [\theta R(xyR(z))] + [\theta^2 R(yR(xz))].
\end{align*}
Formulae for larger products follow from a recursive application of \eqref{RBR}. A closed formula follows from the weight-zero \eqref{shuffle} one by replacing shuffle permutations by quasi-shuffle surjections. A $(m,n;r)$-quasi-shuffle of type $\max(m,n) \le r < m+n$ is a surjection
$$
	\sigma:\{1,\ldots, m+n\}\surj\{1,\ldots, r\},
$$
such that  $\sigma(1) < \cdots < \sigma(m)$ and $\sigma(m+1) < \cdots < \sigma(m+n)$. We denote by $\mop{Sh}(m,n;r)$ the set of $(m,n;r)$-quasi-shuffles of type $\max(m,n) \le r \le m+n$. 
\begin{lem}
The product of iterated Rota--Baxter maps in a commutative weight $\theta$ Rota--Baxter algebra $A$ is given by
\begin{align}
\label{surjectionshuffle}
	& R(x_1R(\cdots R(x_m)\cdots)) R(x_{{m+1}}R(\cdots R(x_{{m+n}})\cdots))\\ 
	& \qquad\quad\ = \sum_{r=\max(m,n)}^{m+n}\,(-\theta)^{m+n-r} 
	\sum_{\sigma\in \mathrm{Sh}(m,n;r)} R(y_{1}^\sigma R( \cdots R(y_{{r}}^\sigma)\cdots)) \nonumber
\end{align}
with $y_i^\sigma:=\prod_{j\in \sigma^{-1}(\{i\})}x_i$.
\end{lem}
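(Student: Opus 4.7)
The plan is to argue by strong induction on $N := m+n$. For the base case $N = 2$ (so $m = n = 1$), the identity reduces to the Rota--Baxter relation
$$
R(x_1)R(x_2) = R(x_1 R(x_2)) + R(x_2 R(x_1)) - \theta R(x_1 x_2)
$$
(using commutativity to rewrite $R(R(x_1)x_2)$), which matches the right-hand side since $\mathrm{Sh}(1,1;2)$ has two elements (the identity and the transposition) and $\mathrm{Sh}(1,1;1)$ has one.

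For the inductive step, write $A := R(x_1 A')$ and $B := R(x_{m+1} B')$, with $A' := R(x_2 R(\cdots R(x_m)\cdots))$ and $B' := R(x_{m+2} R(\cdots R(x_{m+n})\cdots))$ (with the convention $A' = 1$ if $m=1$ and $B' = 1$ if $n=1$; when $A'$ or $B'$ is $1$, the products below simplify but the formal manipulations remain valid). Apply \eqref{RBR} at the outermost level and use commutativity to rewrite the result as
$$
AB = R\bigl(x_1\,(A' \cdot B)\bigr) + R\bigl(x_{m+1}\,(A \cdot B')\bigr) - \theta\,R\bigl(x_1 x_{m+1}\,(A' \cdot B')\bigr).
$$
Each of the three inner products $A' \cdot B$, $A \cdot B'$, $A' \cdot B'$ is a product of iterated Rota--Baxter expressions of total length strictly smaller than $N$, so the inductive hypothesis applies to each.

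The key combinatorial step is to read off what the induction gives. Expanding $A' \cdot B$ via induction produces a sum over $\sigma' \in \mathrm{Sh}(m-1,n;r')$ of iterated Rota--Baxter expressions weighted by $(-\theta)^{(m-1)+n-r'}$; prepending $x_1$ alone at the outermost level corresponds bijectively to quasi-shuffles $\sigma \in \mathrm{Sh}(m,n;r)$ with $r = r'+1$ and $\sigma^{-1}(\{1\}) = \{1\}$, and the weight $(-\theta)^{(m-1)+n-r'} = (-\theta)^{m+n-r}$ matches exactly. Similarly, the middle term accounts for quasi-shuffles with $\sigma^{-1}(\{1\}) = \{m+1\}$, and the last (weighted by $-\theta$) for those with $\sigma^{-1}(\{1\}) = \{1, m+1\}$. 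Since every order-preserving surjection $\sigma \in \mathrm{Sh}(m,n;r)$ must place at least one of $x_1, x_{m+1}$ in the first slot, these three cases partition $\bigcup_r \mathrm{Sh}(m,n;r)$.

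The main obstacle is bookkeeping: one must verify that the variables $y_i^\sigma := \prod_{j \in \sigma^{-1}(\{i\})} x_i$ assemble correctly after prepending $x_1$, $x_{m+1}$, or $x_1 x_{m+1}$ to the inductive expansions, and that the exponents of $-\theta$ agree. The edge cases $m=1$ or $n=1$ need a brief separate check so that the ``empty'' $A'$ or $B'$ does not produce spurious terms; in those cases the corresponding terms in the split of $AB$ collapse, and the combinatorial matching of quasi-shuffles still holds since any $\sigma \in \mathrm{Sh}(1,n;r)$ is forced to have $1 \in \sigma^{-1}(\{1\})$.
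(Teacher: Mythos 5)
Your induction on $m+n$ — applying the Rota--Baxter relation once at the outermost level and matching the three resulting terms with the partition of quasi-shuffles according to whether $\sigma^{-1}(\{1\})$ equals $\{1\}$, $\{m+1\}$, or $\{1,m+1\}$ — is correct, and the weight bookkeeping $(-\theta)^{m+n-r}$ checks out in all three cases. This is exactly the recursive application of \eqref{RBR} that the paper indicates (and illustrates only on the examples $R(x)R(y)$ and $R(x)R(yR(z))$ without writing out the general step), so your proposal is essentially the paper's argument with the details supplied.
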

 
Notice that the set $\sigma^{-1}(\{i\})$ contains one or two elements. As in the pure shuffle case, these identities are valid only in the commutative case.


\subsection{Related algebraic structures}
\label{ssect:algstru}

\noindent What sets Rota--Baxter algebras apart are the rich algebraic structures induced by the relation \eqref{RBR}. In the weight zero case, we will show that one recovers the structures familiar in chronological calculus. In general, however, new structures show up involving quasi-shuffles.

It is natural and useful to extend the notion of Rota--Baxter operator to more general situations that the commutative or associative case:

\begin{defn}
Let $A$ be a vector space equipped with a bilinear product denoted $m_\centerdot: A \times A \to A$. A linear operator on $A$ is called a Rota--Baxter operator of weight $\theta$ if and only if, for any $x,y$ in $A$
\begin{equation}
\label{generalRB}
	m_\centerdot (R(x), R(y))= R(m_\centerdot (x, R(y)) +m_\centerdot (R(x), y) - \theta m_\centerdot (x, y)).
\end{equation}
When the product defines a given algebraic structure (say of $\mathcal P$-algebra) on $A$ (for example, a Lie algebra), then $(A,m_\centerdot,R)$ is called a Rota--Baxter $\mathcal P$-algebra (a Rota--Baxter Lie algebra, for example).
\end{defn}

Notice that with these conventions, a Rota--Baxter algebra (resp.~a commutative Rota--Baxter algebra) as we defined it is equivalently a Rota--Baxter associative algebra (resp.~a Rota--Baxter commutative and associative algebra). These shortcuts should cause no harm in practice.

\begin{exam}
Let $(A,R)$ be a Rota--Baxter algebra. One verifies quickly that on the Lie algebra $L_A$, defined on $A$ in terms of the usual commutator bracket, the map $R$ satisfies for all elements $x,y \in A$ the Lie analog of the Rota--Baxter relation of weight $\theta$:
\begin{equation}
\label{RBRLie}
	[R(x),R(y)]=R\big([R(x),y] + [x,R(y)] - \theta [x,y] \big),
\end{equation}
making $(L_A,[-,-],R)$ a Rota--Baxter Lie algebra of weight $\theta$. In other terms, the forgetful functor from associative to Lie algebras (given by the construction of the bracket as an anti-commutator) goes over to a forgetful functor from associative Rota--Baxter to Rota--Baxter Lie algebras. These phenomena have been studied in general in \cite{Bai}.
\end{exam}

For an associative Rota--Baxter algebra $A$ of weight $\theta$ the operator $B:=R - \tilde{R}$ satisfies the modified Rota--Baxter identity of weight $\theta$:
\begin{equation}
\label{modRBR}
	B(x)B(y)=B(B(x)y+xB(y)) - \theta^2 xy.
\end{equation}
On the level of Lie algebra the corresponding Lie analog of \eqref{modRBR} is
\begin{equation}
\label{ybm}
	[B(x),B(y)]=B([B(x),y]+[x,B(y)]) - \theta^2 [x,y].
\end{equation}

\begin{rmk}\label{rmk:mCYBE}
Relations \eqref{RBRLie} and \eqref{ybm} are well-known in the context of classical integrable systems, where they are called modified classical Yang--Baxter equations \cite{STS}. Semenov-Tian-Shansky observed that \eqref{modRBR} defines an associative analog of the modified classical Yang--Baxter equation.
\end{rmk}

\smallskip 

We now introduce one of the key properties of Rota--Baxter algebras. 

\begin{defn}[Rota--Baxter product]
The argument of the map $R$ on the right hand side of \eqref{RBR} consists of a sum of three terms; one can show that it defines a new associative product on $A$. We denote this new product by $m_{\ast_\theta}$ and call it the Rota--Baxter associative product:
\begin{equation}
\label{doubleasso}
	m_{\ast_\theta}(x , y)=x \ast_\theta y := R(x)y + xR(y) - \theta xy.
\end{equation}
\end{defn}

It is obvious that \eqref{RBR} implies that $R(x \ast_\theta y )=R(x)R(y)$ and $\tilde{R}(x \ast_\theta  y )=-\tilde{R}(x)\tilde{R}(y)$.  In particular
\begin{lem}
The Rota--Baxter map $R$ is a morphism of algebras from $(A,\ast_\theta)$ to $(A,\cdot)$.
\end{lem}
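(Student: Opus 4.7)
The plan is to observe that the lemma is essentially a rephrasing of the Rota--Baxter identity itself. First I would verify the multiplicativity of $R$: by the very definition \eqref{doubleasso} of the product $\ast_\theta$,
$$R(x \ast_\theta y) \;=\; R\bigl(R(x)y + xR(y) - \theta xy\bigr),$$
and the right-hand side equals $R(x)\cdot R(y)$ directly by the Rota--Baxter relation \eqref{RBR}. This is the whole content of the morphism property, modulo linearity of $R$, which is built in from the start.

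Next, since calling $R$ an algebra morphism presupposes that $(A,\ast_\theta)$ is actually an associative algebra, I would verify associativity of $\ast_\theta$ (already announced in Definition of the Rota--Baxter product). The plan is to expand $(x \ast_\theta y)\ast_\theta z$ using \eqref{doubleasso} twice, substitute $R(x\ast_\theta y)=R(x)R(y)$ from the first step, and symmetrically expand $x\ast_\theta(y\ast_\theta z)$. Both sides should collapse to the manifestly symmetric expression
$$R(x)R(y)z + R(x)yR(z) + xR(y)R(z) - \theta\bigl(R(x)yz + xR(y)z + xyR(z)\bigr) + \theta^2 xyz,$$
which establishes associativity.

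The main (and only) obstacle is notational bookkeeping in the associativity check; the multiplicativity of $R$ is tautological after unfolding the Rota--Baxter relation. As a by-product one recovers the dual statement for $\tilde R$: computing $\tilde R(x\ast_\theta y)=\theta(x\ast_\theta y)-R(x\ast_\theta y)$ and expanding $(\theta x-R(x))(\theta y-R(y))$ shows that $\tilde R(x\ast_\theta y)=-\tilde R(x)\tilde R(y)$, so that $\tilde R$ is an anti-morphism up to a sign and the ``mixed'' relations \eqref{mixedRBR} fall out naturally as well.
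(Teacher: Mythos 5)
Your proposal is correct and matches the paper's own (one-line) argument: the multiplicativity $R(x\ast_\theta y)=R(x)R(y)$ is read off directly by applying $R$ to the definition \eqref{doubleasso} and invoking \eqref{RBR}, which is exactly what the paper does. Your supplementary verification of the associativity of $\ast_\theta$ and of the identity $\tilde R(x\ast_\theta y)=-\tilde R(x)\tilde R(y)$ is also correct and simply fills in details the paper asserts without proof.
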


We remark that in terms of the modified Rota--Baxter map $B:=R - \tilde{R}$ we find that $x \ast_\theta y = \frac{1}{2}(B(x)y + xB(y))$ and $\llbracket x , y\rrbracket_\theta = \frac{1}{2}([B(x),y]+[x,B(y)])$.

\begin{exam}[Commutative shuffles] Consider a weight zero commutative Rota--Baxter algebra and its Rota--Baxter product $\ast_0=\ast$ defined in \eqref{doubleasso}. One can look at it as a splitting of an associative and commutative product into two components (the so-called left and right half-shuffle products):
$$
	f \ast g = f\succ g + f\prec g 
$$
where 
\begin{equation}
\label{halfSh}
	f\prec g:=fR(g),\quad f\succ g:=R(f)g. 
\end{equation}
The two abstract products, $\prec$ and $\succ$, satisfy the axiomatic characterisation of commutative shuffle algebras that appeared independently in algebraic topology and in Lie theory in the 1950's \cite{EM,schutz}:
\begin{equation}
\label{demishuffle0}
	a \prec b=b\succ a,\quad\   (a\prec b)\prec c=a\prec (b\prec c+c\prec b).
\end{equation}
These two relations are actually enough to insure that the product defined by the sum $\prec + \succ$ is associative as well as commutative \cite{schutz}. Notice that knowing one of the two products $\prec,\succ$ is enough since they determine each other.
Sch\"utzenberger computed also the free commutative shuffle algebra over an alphabet $X$ (the algebra generated by $X$ and a product $\prec$ such that setting $x\succ y:=y\prec x$, the two products satisfy the above relations). He showed that this algebra is (up to isomorphism) the tensor algebra over $X$ where, using a word notation for tensor products:
$$
	x_1\cdots x_n \prec y_1\cdots y_m:=x_1(x_2\cdots x_n\prec y_1\cdots y_m
									+y_1\cdots y_m\prec x_2\cdots x_n),
$$
so that the product $\ast$ identifies with the usual shuffle product of words.
In control theory this algebra would (sometimes) be called the free chronological algebra over $X$ (in the sense of Kawski whose terminology differs from that used by Agrachev--Gamkrelidze): this is one of the (many) relations between Rota--Baxter algebras and chronological calculus. In the theory of operads, this algebra is very often called the free Zinbiel algebra over $X$ \cite{lodV}, a tribute to Cuvier's dual notion of Leibniz algebras \cite{cuv1,cuv3}.
\end{exam}

\begin{exam}[Commutative quasi--shuffles]
Let us now assume that the weight $\theta=1$. The idea is similar to the weight zero case (i.e.~commutative shuffles). In the product (\ref{doubleasso}) we then have three terms:
$$
	x \ast_1 y = R(x)y + xR(y) - xy.
$$
The first two terms correspond to the aforementioned ``half-shuffles''. The last part, $xy$, is just the original commutative and associative algebra product. Using the notations $x \prec y:=xR(y)$, $x\succ y:=R(x)y$ and $x \cdot y:=xy$ yields:
\begin{equation}
\label{demi-quasi-shuffle}
	x\succ y=y\prec x,\quad\   (x\prec y)\prec z=x\prec (y\prec z+z\prec y - y \cdot z).
\end{equation}
This is the commutative quasi-shuffle product \cite{Hoffman}. When compared to the ordinary shuffle product (of weight zero), the last term, $y \cdot z$ may be considered a deformation of the shuffle product. Recall that the quasi-shuffle product is very present in the context of the theory of multiple zeta values (MZVs) and other generalisations of special functions (see \cite{CartierMZV}). 
\end{exam}

\begin{exam}
In a weight zero non-commutative Rota--Baxter algebra $A$ we do no longer have $a \prec b = b \succ a$, but since 
\begin{align*}
	aR(b\ast_0 c)	&=a(R(b)R(c))=(aR(b))R(c),\\
	(R(a)b)R(c)	&=R(a)bR(c)=R(a)(bR(c)),\\
	R(a)(R(b)c)	&=R(a)R(b)c=(R(a)R(b))c=R(a\ast_0 b)c,
\end{align*}
the maps $a\longmapsto a\prec b:=aR(b)$ and $a\longmapsto a\succ b:=R(a)b$ define two commuting right and left representations of the algebra $(A,\ast_0)$ on itself. That is, the half-shuffles satisfy the system of axioms that characterises the notion of non-commutative shuffles \cite{Aguiar,Manchon1} (see also Definition \ref{def:shufflealgebra}):

\begin{prop}[Non-commutative shuffles] \label{thm:shufIto} In a weight zero Rota--Baxter algebra $A$, setting $a \prec b:=aR(b)$, $a\succ b:=R(a)b$, the following three shuffle relations are satisfied:
\begin{equation}
\begin{aligned}
\label{shuffleNC}
	(a\prec b)\prec c&=a\prec (b\prec c+b\succ c)\\
	a\succ (b\prec c)&=(a\succ b)\prec c\\
	a\succ (b\succ c)&=(a\prec b+a\succ b)\succ c,
\end{aligned}
\end{equation}
making $A$ a shuffle algebra. The shuffle product $\shuffle=\succ+\prec$ is the Rota--Baxter product, i.e., $\shuffle=\ast_0$.
\end{prop}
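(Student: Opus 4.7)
The plan is to verify the three shuffle identities by direct computation, using only associativity of the underlying product on $A$ together with the weight zero Rota--Baxter relation $R(x)R(y) = R(xR(y) + R(x)y)$. Each of the three identities isolates one of the three possible ``positions'' for the operator $R$ in a ternary product, and in each case one side involves only associativity while the other triggers a single application of the Rota--Baxter relation.

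First I would handle the middle identity, which is the easiest: by definition $a \succ (b \prec c) = R(a)(bR(c))$ and $(a \succ b) \prec c = (R(a)b)R(c)$, so the equality is nothing but associativity of the product on $A$. Next, for the first identity, I would compute $(a \prec b) \prec c = (aR(b))R(c) = a(R(b)R(c))$ using associativity, and then apply the Rota--Baxter relation inside, obtaining $a R(bR(c) + R(b)c) = a \prec (b \prec c) + a \prec (b \succ c) = a \prec (b \prec c + b \succ c)$, where the last step uses linearity of $R$ and the fact that $\prec$ is $k$-linear in its right argument. The third identity is entirely symmetric: $a \succ (b \succ c) = R(a)(R(b)c) = (R(a)R(b))c = R(aR(b) + R(a)b) c = (a \prec b) \succ c + (a \succ b) \succ c$.

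Finally, the statement that $\shuffle := \prec + \succ$ coincides with $\ast_0$ is immediate from $a \prec b + a \succ b = aR(b) + R(a)b$, which is exactly the formula \eqref{doubleasso} for $a \ast_\theta b$ specialised to $\theta = 0$.

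There is no real obstacle: the proof is essentially a bookkeeping exercise reflecting the fact that the Rota--Baxter relation of weight zero is the precise algebraic encoding of integration by parts, and the three shuffle axioms are just the three ways to parenthesise a product of three elements when exactly one of them (the ``innermost'' factor with respect to $R$) determines where a single application of \eqref{RBR} is needed. The only point requiring a trace of care is to keep track of which argument carries an $R$ in the definitions of $\prec$ and $\succ$; once that convention is fixed, each identity falls out in a single line.
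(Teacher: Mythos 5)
Your proof is correct and follows essentially the same route as the paper: the middle identity is pure associativity, and the outer two each amount to one application of the weight zero Rota--Baxter relation (the paper phrases this as $R(b\ast_0 c)=R(b)R(c)$ and reads the computation in the opposite direction, but the content is identical). The identification $\shuffle=\ast_0$ is likewise handled the same way in both arguments.
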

\end{exam}

\begin{remark}\label{QSprod}
In a weight $\theta=1$ non-commutative Rota--Baxter algebra $A$ we still have 
\begin{align*}
	aR(b\ast_1 c)	&=a(R(b)R(c))=(aR(b))R(c),\\
	(R(a)b)R(c)	&=R(a)bR(c)=R(a)(bR(c)),\\
	R(a)(R(b)c)	&=R(a)R(b)c=(R(a)R(b))c=R(a\ast_1 b)c,
\end{align*}
and the maps $a\longmapsto aR(b)$, $a\longmapsto R(b)a$ define two commuting right and left representations of the algebra $(A,\ast_1)$ on itself. However, the two half-shuffles $a\prec b:=aR(b)$ and $a\succ b:=R(a)b$ do not add up to the $\ast_1$ product, so that $A$ is not a shuffle algebra.
\end{remark}

\smallskip

The previous identities imply that the half-shuffles satisfy the system of axioms that characterises non-commutative quasi-shuffle algebras (also called tridendriform algebras in the literature). These relations seem to have appeared first in stochastic analysis, i.e., in the work of Karandikar on It\^o calculus for semimartingales \cite{Karandikar1982a} (see also \cite{cur}).

\begin{thm}[Non-commutative quasi-shuffles]\label{thm:quasishufIto} 
Setting $\ast:=\ast_1$ for the Rota--Baxter product, and writing $a \cdot b:=ab$ for the usual associative product in $A$ (so that $\ast:=\prec +\succ -\ \cdot$), the left- and right half-shuffles, $a\prec b:=aR(b)$ and $a\succ b:=R(a)b$, satisfy the following identities defining a (non-commutative) quasi-shuffle algebra
\begin{equation}
\label{quasishuffleNC}
\begin{tabular}{ l l }
	$(a \prec b) \prec c = a \prec (b\ast c)$, 		&\quad $(a \succ b) \cdot c = a \succ (b \cdot c)$ \\  
	$a \succ (b \succ c) = (a\ast b) \succ c$, 		&\quad  $(a \prec b) \cdot c = a \cdot (b \succ c)$ \\
	$(a \succ b) \prec c = a \succ (b \prec c)$, 	&\quad  $(a \cdot b) \prec c = a \cdot (b \prec c)$. 
\end{tabular}
\end{equation}
\end{thm}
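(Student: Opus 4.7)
My plan is to verify each of the six identities of \eqref{quasishuffleNC} by direct computation, using only two ingredients: the associativity of the underlying product in $A$, and the immediate consequence of the weight~$1$ Rota--Baxter relation that $R$ is multiplicative with respect to $\ast = \ast_1$, i.e.
$$
	R(a)R(b) = R\bigl(R(a)b + aR(b) - ab\bigr) = R(a \ast b).
$$
This is precisely the identity highlighted just before the statement of the theorem (the analogue of the fact used already in Proposition~\ref{thm:shufIto} for the weight~$0$ case), and it is the only non-trivial fact I shall invoke.

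I split the six relations into two groups. Four of them involve no use of the Rota--Baxter identity and reduce to the associativity of the ordinary product of $A$. Explicitly:
\begin{align*}
	(a \succ b) \cdot c 	&= (R(a)b)c = R(a)(bc) = a \succ (b \cdot c),\\
	(a \prec b) \cdot c 	&= (aR(b))c = a(R(b)c) = a \cdot (b \succ c),\\
	(a \succ b) \prec c 	&= (R(a)b)R(c) = R(a)(bR(c)) = a \succ (b \prec c),\\
	(a \cdot b) \prec c  	&= (ab)R(c) = a(bR(c)) = a \cdot (b \prec c).
\end{align*}
This disposes of the four identities in the second column and of the last identity in the first column.

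For the two remaining relations the Rota--Baxter identity is used once, in exactly the way mentioned above. For $(a \prec b) \prec c = a \prec (b \ast c)$ I would compute
$$
	(a \prec b) \prec c = (aR(b))R(c) = a\bigl(R(b)R(c)\bigr) = a\,R(b \ast c) = a \prec (b \ast c),
$$
and symmetrically for $a \succ (b \succ c) = (a \ast b) \succ c$,
$$
	a \succ (b \succ c) = R(a)(R(b)c) = \bigl(R(a)R(b)\bigr)c = R(a \ast b)\,c = (a \ast b) \succ c.
$$

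There is really no obstacle to speak of: the whole argument is an unpacking of the defining formulas for $\prec, \succ, \cdot, \ast$ in terms of $R$, together with one application of $R(x)R(y)=R(x\ast y)$. The only thing worth emphasising, and which makes the statement conceptually clean, is that the weight $1$ relation is used just twice, and in a perfectly symmetric fashion between the left and right half-shuffles; the remaining four identities are structural consequences of having defined $\prec$ and $\succ$ through multiplication by $R(\cdot)$ on the right and left respectively in an associative algebra. As pointed out in Remark~\ref{QSprod}, these are exactly the assertions that $a \mapsto aR(b)$ and $a \mapsto R(b)a$ commute and define right/left representations of $(A,\ast_1)$, which is the content of identities (1), (3) together with the middle column identity $(a \succ b) \prec c = a \succ (b \prec c)$.
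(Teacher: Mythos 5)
Your verification is correct and follows essentially the same route as the paper, which establishes the theorem via the displayed computations in Remark~\ref{QSprod} (associativity of the underlying product plus the multiplicativity $R(a)R(b)=R(a\ast_1 b)$) and leaves the remaining unpacking implicit. Your write-up simply makes explicit the case-by-case check that the paper compresses into the phrase ``the previous identities imply''.
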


\begin{prop}\cite{EF2002}\label{prop:link} Let $(A,R)$ be a Rota--Baxter algebra of weight $\theta=1$. Define $a \prec b:=aR(b) - ab = -a \tilde{R}(b)$ and $a \succ b:=R(a)b$. Then $(A, \prec, \succ)$ satisfies the shuffle algebra axioms \eqref{shuffleNC}.
\end{prop}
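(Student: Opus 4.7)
The plan is to verify the three shuffle relations \eqref{shuffleNC} one by one, by direct computation, exploiting the two basic identities at our disposal: the Rota--Baxter relation of weight $1$ (for both $R$ and $\tilde R$) and the tautology $\tilde R = Id - R$, together with the mixed identities \eqref{mixedRBR}. Throughout, it is convenient to use the form $a\prec b=-a\tilde R(b)$, since this converts the quasi-shuffle of weight~$1$ associated to $R$ into a bona fide shuffle.

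The middle axiom is essentially free: both
$a\succ(b\prec c)=R(a)(-b\tilde R(c))$ and $(a\succ b)\prec c=-(R(a)b)\tilde R(c)$
equal $-R(a)b\tilde R(c)$ by associativity of the underlying product, so no Rota--Baxter input is needed.

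For the first axiom I will expand
$(a\prec b)\prec c=a\tilde R(b)\tilde R(c)$
on the left, and
$a\prec(b\prec c+b\succ c)=-a\tilde R\bigl(-b\tilde R(c)+R(b)c\bigr)$
on the right. The identity reduces to
$\tilde R(b\tilde R(c))-\tilde R(R(b)c)=\tilde R(b)\tilde R(c)$.
Now $\tilde R$ is itself a weight-$1$ Rota--Baxter operator, so
$\tilde R(b)\tilde R(c)=\tilde R(\tilde R(b)c)+\tilde R(b\tilde R(c))-\tilde R(bc)$;
substituting $\tilde R(b)c=bc-R(b)c$ into the first summand yields
$\tilde R(\tilde R(b)c)=\tilde R(bc)-\tilde R(R(b)c)$, and the two $\tilde R(bc)$ terms cancel, giving the claim.

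For the third axiom I will expand $a\succ(b\succ c)=R(a)R(b)c$ on the left and
$(a\prec b+a\succ b)\succ c=R\bigl(-a\tilde R(b)+R(a)b\bigr)c$
on the right. This reduces to $R(R(a)b)-R(a\tilde R(b))=R(a)R(b)$. Here I apply the mixed identity $R(a)\tilde R(b)=R(a\tilde R(b))+\tilde R(R(a)b)$ from \eqref{mixedRBR}, which gives $R(a\tilde R(b))=R(a)\tilde R(b)-\tilde R(R(a)b)$; combining with $\tilde R(R(a)b)=R(a)b-R(R(a)b)$ and $R(a)\tilde R(b)=R(a)b-R(a)R(b)$ produces the desired equality after the $R(a)b$ terms cancel.

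The only subtle point is not a conceptual obstacle but a bookkeeping one: the sign in $a\prec b=-a\tilde R(b)$ must be propagated carefully when applying $\tilde R$ inside the first axiom, and one has to choose the right form of the mixed Rota--Baxter identity (either for the pair $(R,\tilde R)$ or the weight-$1$ relation for $\tilde R$ alone) so that the unwanted $\tilde R(bc)$ or $R(a)b$ terms cancel. Conceptually, the content of the proposition is that passing from $R$ to $\tilde R$ (on the right slot, with a sign) trades the weight-$1$ quasi-shuffle structure of Theorem \ref{thm:quasishufIto} for a weight-$0$ shuffle structure in the sense of Proposition \ref{thm:shufIto}.
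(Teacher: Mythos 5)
Your verification is correct: all three sign computations check out, and the identities you invoke (the weight-$1$ Rota--Baxter relation for $\tilde R$ and the mixed identity \eqref{mixedRBR}) are exactly what is available in the surrounding text; the paper itself omits the proof and defers to \cite{EF2002}, where the argument is precisely this direct check. One small simplification: for the third axiom you do not need the mixed identity at all, since $R(R(a)b)-R(a\tilde R(b))=R\big(R(a)b+aR(b)-ab\big)=R(a)R(b)$ is just the weight-$1$ Rota--Baxter relation itself.
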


\medskip

One checks easily that $R$ is still a Rota--Baxter operator for the new product $\ast_\theta$, that is, the Rota--Baxter relation still holds when one replaces the initial product on $A$ by the product $\ast_\theta$:

\begin{lem}
Let $(A,\cdot,R)$ be a Rota--Baxter algebra of weight $\theta$, the algebra $(A,\ast_\theta,R)$ is again a weight $\theta$ Rota--Baxter algebra, denoted $A^\theta$.
\end{lem}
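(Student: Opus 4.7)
There are two things to check: first, that $(A,\ast_\theta)$ is associative (so that the statement makes sense as claimed), and second, that $R$ satisfies the weight $\theta$ Rota--Baxter relation with respect to the new product $\ast_\theta$. The associativity of $\ast_\theta$ was already asserted when it was introduced in \eqref{doubleasso}; I would either invoke it or give the short direct check, which amounts to expanding both $(x\ast_\theta y)\ast_\theta z$ and $x\ast_\theta(y\ast_\theta z)$ and using the original Rota--Baxter relation \eqref{RBR} to rewrite the two unwanted occurrences of $R(x)R(y)$ and $R(y)R(z)$. Alternatively, associativity follows immediately once we have at hand the quasi-shuffle relations \eqref{quasishuffleNC}, since $\ast_\theta=\succ+\prec-\theta\,\cdot$ is forced to be associative by those axioms.

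The heart of the lemma is the second point, namely the identity
\begin{equation*}
R(x)\ast_\theta R(y)\;=\;R\bigl(R(x)\ast_\theta y+x\ast_\theta R(y)-\theta\,x\ast_\theta y\bigr).
\end{equation*}
My plan is a straight expansion. On the left-hand side, the definition of $\ast_\theta$ gives
\begin{equation*}
R(x)\ast_\theta R(y)=R(R(x))R(y)+R(x)R(R(y))-\theta R(x)R(y).
\end{equation*}
On the right-hand side, the inner expression expands into nine monomials in $x,y,R(x),R(y)$ (three from each $\ast_\theta$). After applying $R$, the terms that contain a factor of $R$ inside $R$, namely $R(R(R(x))y)$ and $R(xR(R(y)))$, must be simplified. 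I would use the original Rota--Baxter relation \eqref{RBR} in the rewritten form
\begin{equation*}
R(R(a)b)=R(a)R(b)-R(aR(b))+\theta R(ab)
\end{equation*}
(and its mirror version), specialised to $a=R(x),b=y$ and $a=x,b=R(y)$ respectively, to turn those two terms into expressions involving only $R(R(x))R(y)$, $R(x)R(R(y))$, $R(R(x)y)$, $R(xR(y))$ and $R(R(x)R(y))$.

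Collecting coefficients, the $R(R(x)R(y))$ contributions should cancel (one expects a $-1+2-1$ pattern coming from the single appearances in the two simplifications and the double appearance in the middle of the expansion), leaving
\begin{equation*}
R(R(x))R(y)+R(x)R(R(y))-\theta\bigl[R(R(x)y)+R(xR(y))\bigr]+\theta^{2}R(xy).
\end{equation*}
A final application of \eqref{RBR} in the form $R(R(x)y)+R(xR(y))=R(x)R(y)+\theta R(xy)$ then collapses the bracket and cancels the $\theta^{2}R(xy)$ term, giving exactly the left-hand side.

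The work is therefore entirely book-keeping; there is no real obstacle, only the need to be careful with signs and with keeping track of the nine terms in the expansion. The calculation also makes transparent why the identity \eqref{mixedRBR} between $R$ and $\tilde R=\theta\,\mathrm{Id}-R$ is the key algebraic fact in play: the cancellations above are exactly the incarnations of those mixed relations.
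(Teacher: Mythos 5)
Your plan is correct and coincides with the verification the paper leaves to the reader (``one checks easily\ldots''): a direct expansion of both sides of $R(x)\ast_\theta R(y)=R\bigl(R(x)\ast_\theta y+x\ast_\theta R(y)-\theta\,x\ast_\theta y\bigr)$, using \eqref{RBR} for the pairs $(R(x),y)$, $(x,R(y))$ and $(x,y)$; the coefficient of $R(R(x)R(y))$ indeed cancels as $-1+2-1=0$ and the remaining terms collapse to $R(R(x))R(y)+R(x)R(R(y))-\theta R(x)R(y)$ as you describe. The associativity of $\ast_\theta$ is asserted when the product is introduced in \eqref{doubleasso}, so invoking it is fine.
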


Moreover, in light of \eqref{RBRLie} one verifies that 

\begin{lem}The product
\begin{equation}
\label{doubleLie}
	\llbracket x , y\rrbracket_\theta := [R(x),y] + [x,R(y)] - \theta [x,y]
\end{equation}
defines another Lie bracket on the Lie algebra $(L_A, [-,-])$. The corresponding Lie algebra is denoted $({L}_A^\theta,\llbracket-,-\rrbracket_\theta)$. 
\end{lem}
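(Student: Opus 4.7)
The plan is to observe that $\llbracket -,-\rrbracket_\theta$ is nothing but the commutator bracket associated with the Rota--Baxter double product $\ast_\theta$ introduced in \eqref{doubleasso}, and then invoke the general fact that any associative product gives rise to a Lie bracket via commutation.

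Concretely, I would first expand
\begin{align*}
x \ast_\theta y - y \ast_\theta x
&= \bigl(R(x)y + xR(y) - \theta xy\bigr) - \bigl(R(y)x + yR(x) - \theta yx\bigr) \\
&= [R(x),y] + [x,R(y)] - \theta[x,y] \\
&= \llbracket x,y\rrbracket_\theta,
\end{align*}
so the new bracket coincides with the commutator of $\ast_\theta$. Antisymmetry $\llbracket x,y\rrbracket_\theta = -\llbracket y,x\rrbracket_\theta$ is then immediate, and the Jacobi identity follows from the associativity of $\ast_\theta$ by the standard three-line computation
\[
\sum_{\mathrm{cyc}} \llbracket x, \llbracket y,z\rrbracket_\theta\rrbracket_\theta
= \sum_{\mathrm{cyc}} \bigl( x\ast_\theta(y\ast_\theta z) - x\ast_\theta(z\ast_\theta y) - (y\ast_\theta z)\ast_\theta x + (z\ast_\theta y)\ast_\theta x\bigr) = 0,
\]
where every term cancels after reindexing the cyclic sum.

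There is essentially no obstacle here: the associativity of $\ast_\theta$ has already been recorded just above the statement (and is, in any case, a direct algebraic consequence of the Rota--Baxter relation \eqref{RBR}), and the passage from an associative to a Lie structure via commutation is entirely formal. Equivalently, one could argue functorially by appealing to the forgetful functor from Rota--Baxter associative algebras to Rota--Baxter Lie algebras applied to $(A,\ast_\theta,R)$, whose underlying Lie bracket is $\llbracket -,-\rrbracket_\theta$ by the computation above.
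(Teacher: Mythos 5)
Your proof is correct. The identification $\llbracket x,y\rrbracket_\theta = x\ast_\theta y - y\ast_\theta x$ is easily checked (and is consistent with the paper's own remark that $x \ast_\theta y = \frac{1}{2}(B(x)y + xB(y))$ and $\llbracket x , y\rrbracket_\theta = \frac{1}{2}([B(x),y]+[x,B(y)])$), and once the associativity of $\ast_\theta$ is granted -- which the paper records, and which follows from $R(x\ast_\theta y)=R(x)R(y)$ together with a direct expansion -- antisymmetry and Jacobi are formal. The paper, however, does not argue this way: it offers no written proof and instead points to the Lie-level Rota--Baxter relation \eqref{RBRLie}, i.e., it views the lemma as an instance of the general ``double bracket'' construction for Rota--Baxter operators on Lie algebras (the Semenov-Tian-Shansky / modified classical Yang--Baxter circle of ideas, cf.\ Remark~\ref{rmk:mCYBE} and the reference to Bai et al.), where one verifies the Jacobi identity for $\llbracket-,-\rrbracket_\theta$ directly from the Jacobi identity of $[-,-]$ and relation \eqref{RBRLie}. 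Your route is more economical in the present setting because $L_A$ is the commutator Lie algebra of an associative algebra, so you can borrow the already-established associative double product; the paper's route is more general, since it applies verbatim to a Rota--Baxter operator on an abstract Lie algebra that is not the commutator of an associative Rota--Baxter structure -- a situation that actually occurs in the paper (the $QR$-factorisation example defines a Lie Rota--Baxter structure whose operator does not satisfy the associative relation \eqref{RBR}). Both arguments are valid for the lemma as stated; yours would not extend to that broader Lie-only context, which is worth keeping in mind.
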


Analogously to the associative case, we have that $R$ becomes a Lie morphism from ${L}^\theta_A$ to $L_A$, i.e., $R(\llbracket x , y\rrbracket_\theta)=[R(x),R(y)]$. 

 \begin{rmk}
Hence, a Rota--Baxter algebra is a vector space with two associative products (resp.~Lie brackets) related by  \eqref{doubleasso} (resp.~\eqref{doubleLie}) and morphisms of algebras. This leads to another, equivalent, characterisation of Rota--Baxter algebras since the fact that $R$ is a map of algebras from $A^\theta$ to $A$ is precisely the Rota--Baxter relation, from which properties of the two Lie brackets follow.
\end{rmk}

\smallskip

Equations \eqref{doubleasso} and \eqref{doubleLie} relate two associative products respectively two Lie brackets. Saying this, one may wonder whether the terms on the righthand side of both identities, i.e., $R(x)y$, $xR(y)$ respectively $[R(x),y]$, $[x,R(y)]$, possesses more properties. Regarding the former, we have seen in Theorem \ref{thm:shufIto} and Theorem \ref{thm:quasishufIto} that they define shuffle (weight zero), respectively quasi-shuffle algebras (non-zero weight). Let us now consider the Lie algebra case \eqref{doubleLie}, that is, $\theta \neq 0$. It turns out that for $x,y \in A$ the bilinear product 
$$
	x \rhd y:=[R(x),y]
$$ 
does not satisfy any relation, meaning that it is magmatic in nature. However, in \cite{Bai} it was shown that on the Lie algebra $L_A(\theta)$ with Lie bracket $[x,y]^\theta :=-\theta[x,y]$ the following relations are satisfied
\begin{align}
	x \rhd [ y,z ]^\theta &= [x \rhd y,z]^\theta 
					+ [ y, x \rhd z ]^\theta 		\label{postLie1} \\
	[ x,y ]^\theta  \rhd z &= x \rhd (y \rhd z ) - (x \rhd y) \rhd z  
					- y \rhd (x \rhd z ) + (y \rhd x) \rhd z.  	\label{postLie2}
\end{align}
Identity \eqref{postLie1} follows immediately from the Jacobi identity. A quick computation yields \eqref{postLie2}:
\begin{align*} 
	[ x,y ]^\theta  \rhd z 
	&= -\theta[R([x,y]),z]\\
	&= -[R([R(x),y]),z]-[R([x,R(y)]),z] + [[R(x),R(y)],z]\\
	&=  [R(x),[R(y),z]]  -[R([R(x),y]),z]  - [R(y),[R(x),z]] + [R([R(y),x]),z]\\
	&= x \rhd (y \rhd z ) - (x \rhd y) \rhd z   
					- y \rhd (x \rhd z ) + (y \rhd x) \rhd z. 			
\end{align*}

Relations \eqref{postLie1} and \eqref{postLie2} yield what is called a post-Lie algebra structure on $L_A(\theta)$.

\begin{defn}[Post-Lie algebra]\label{def:postLie}
Let $(\mathfrak{g},[-,-])$ be a Lie algebra. A {\rm{post-Lie algebra}} structure on $\mathfrak{g}$ is given in terms of a triple $(\mathfrak{g},[-,-],\rhd )$ with the operator  $\rhd : \mathfrak{g} \otimes \mathfrak{g} \to \mathfrak{g}$ satisfying the post-Lie relations 
\begin{align}
	x \rhd [ y,z ] &= [x \rhd y,z]
					+ [ y, x \rhd z ]		\label{postLie11} \\
	[ x,y ]  \rhd z &= x \rhd (y \rhd z ) - (x \rhd y) \rhd z  
					- y \rhd (x \rhd z) + (y \rhd x) \rhd z.  	\label{postLie22}
\end{align}\end{defn}
For any post-Lie algebra one can show that the bracket
$$
	\llbracket x , y\rrbracket := x \rhd y - y \rhd x - [x,y]
$$  
defines a new Lie bracket on $\mathfrak{g}$ and the corresponding Lie algebra is denoted $(\mathfrak{g},\llbracket - , -\rrbracket)$. Hence, in a (Lie) Rota--Baxter algebra, identity \eqref{doubleLie} is an example of such a new Lie bracket. We refer the reader to \cite{EFM2018} and references in there for more details on the notion of post-Lie algebras and its relevance in other fields, e.g., geometric numerical integration on manifolds and classical integrable systems.

\begin{cor}[Pre-Lie algebra]\label{cor:pre-Lie}
In case of an abelian Lie algebra, the post-Lie algebra $(\mathfrak{g},[-,-],\rhd )$ reduces to a left pre-Lie algebra $(\mathfrak{g},\rhd)$ with left pre-Lie identity:
\begin{equation}
\label{pre-Lie}
	0=(x \rhd y) \rhd z - x \rhd (y \rhd z)  
					- (y \rhd x) \rhd z + y \rhd (x \rhd z).
\end{equation}
As any pre-Lie algebra is a Lie algebra, the bracket $\llbracket x , y\rrbracket := x \rhd y - y \rhd x$ defines a Lie algebra $(\mathfrak{g},\llbracket - , -\rrbracket)$. 
\end{cor}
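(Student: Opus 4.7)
The proof is a direct specialisation of Definition \ref{def:postLie} to the abelian case, followed by the classical observation that every pre-Lie algebra admits an antisymmetrised Lie bracket.

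First I would set $[-,-] \equiv 0$ in the two post-Lie relations. Relation \eqref{postLie11} becomes
\[
	x \rhd 0 = [x \rhd y, z] + [y, x \rhd z] = 0,
\]
so it is satisfied tautologically and carries no information. Relation \eqref{postLie22} becomes
\[
	0 = x \rhd (y \rhd z) - (x \rhd y) \rhd z - y \rhd (x \rhd z) + (y \rhd x) \rhd z,
\]
which, after a trivial rearrangement of signs, is exactly the left pre-Lie identity \eqref{pre-Lie}. This establishes the first assertion.

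For the second assertion, antisymmetry of $\llbracket x, y\rrbracket := x \rhd y - y \rhd x$ is manifest. The Jacobi identity is the standard fact that every (left) pre-Lie product yields a Lie bracket upon antisymmetrisation. Concretely, introducing the associator $(x,y,z) := (x \rhd y) \rhd z - x \rhd (y \rhd z)$, the pre-Lie identity \eqref{pre-Lie} reads $(x,y,z) = (y,x,z)$, i.e.\ the associator is symmetric in its first two arguments. Expanding $\llbracket x, \llbracket y,z\rrbracket\rrbracket + \llbracket y, \llbracket z,x\rrbracket\rrbracket + \llbracket z, \llbracket x,y\rrbracket\rrbracket$ into its twelve terms in $\rhd$ and regrouping, one checks that the result is a signed combination of associators which cancels pairwise precisely because of this symmetry.

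The only genuinely computational step is this last verification of the Jacobi identity; it is however routine (and entirely classical, see e.g.\ the references in Remark following Definition \ref{def:postLie}) and introduces no new difficulty. No other obstacle arises: the corollary is essentially a bookkeeping statement recording that the post-Lie axioms degenerate to the pre-Lie axioms when the ambient Lie bracket is trivial.
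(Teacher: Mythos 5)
Your proof is correct. The paper states this corollary without a separate proof, treating the specialisation $[-,-]\equiv 0$ in Definition \ref{def:postLie} as immediate, and that is exactly what you do: relation \eqref{postLie11} trivialises and relation \eqref{postLie22} becomes \eqref{pre-Lie} up to an overall sign. The only point where your route genuinely differs from the paper is the Jacobi identity for $\llbracket x,y\rrbracket := x\rhd y - y\rhd x$. The paper's earlier Lemma \ref{lem:Liepre-Lie} obtains pre-Lie $\Rightarrow$ Lie by embedding $(\mathfrak g,[-,-]_\rhd)$ into the endomorphisms of $\mathfrak g\oplus k$ via $x\mapsto L_x$, using $[L_x,L_y]=L_{[x,y]_\rhd}$; you instead expand the cyclic Jacobi sum into twelve $\rhd$-terms and cancel them in pairs of associators using the symmetry $(x,y,z)=(y,x,z)$. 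Both arguments are classical and complete; the embedding argument is slightly slicker in that it avoids the twelve-term bookkeeping, while your direct computation is self-contained and makes the role of the associator symmetry explicit. Either way, no gap.
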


Note that in the weight $\theta=0$ case, we have that the Lie bracket $[-,-]^\theta=0$. 

\begin{cor}
The product $x \rhd y:=[R(x),y]$ defines a left pre-Lie algebra on any weight zero Rota--Baxter algebra $(A,R)$. The pre-Lie relation \eqref{pre-Lie} implies that the commutator
$$
	[x , y]_0 := x \rhd y - y \rhd x
$$
defines a Lie bracket on $A$. 
\end{cor}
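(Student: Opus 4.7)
The plan is to recognise this corollary as a direct specialisation of the post-Lie structure established in the paragraphs just before. For a Rota--Baxter algebra of weight $\theta$, the product $x \rhd y := [R(x), y]$ was shown to define a post-Lie structure on the Lie algebra $L_A(\theta)$ whose bracket is $[x,y]^{\theta} = -\theta\,[x,y]$. Setting $\theta = 0$ collapses this bracket identically to zero, so $L_A(0)$ is abelian; relation \eqref{postLie1} becomes trivial, and relation \eqref{postLie2} with zero left-hand side is precisely the left pre-Lie identity \eqref{pre-Lie}. Thus Corollary \ref{cor:pre-Lie} applies verbatim.

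If one prefers a self-contained verification avoiding the intermediate post-Lie language, it suffices to compute the pre-Lie associator directly. From the weight-zero version of \eqref{RBRLie}, namely $[R(x),R(y)] = R([R(x),y]+[x,R(y)])$, one extracts
\[
R([R(x), y]) = [R(x), R(y)] - R([x, R(y)]).
\]
Substituting this into $(x \rhd y) \rhd z = [R([R(x), y]), z]$, expanding $[[R(x), R(y)], z]$ by the Jacobi identity, and cancelling the resulting term $[R(x), [R(y), z]]$ against $x \rhd (y \rhd z)$ yields
\[
(x \rhd y) \rhd z - x \rhd (y \rhd z) = (y \rhd x) \rhd z - y \rhd (x \rhd z),
\]
which is precisely \eqref{pre-Lie}.

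For the second assertion, that $[x, y]_0 := x \rhd y - y \rhd x$ is a Lie bracket, one invokes the general fact already recorded as Lemma \ref{lem:Liepre-Lie}: antisymmetry is immediate from the definition, and the Jacobi identity follows at once from the symmetry of the pre-Lie associator in its first two arguments.

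There is no real obstacle here; the only point requiring mild care is the $\theta = 0$ boundary, where the ambient Lie bracket governing the post-Lie construction degenerates. One simply has to notice that in this degenerate case the post-Lie axioms reduce exactly to the single pre-Lie axiom, as already built into the statement of Corollary \ref{cor:pre-Lie}.
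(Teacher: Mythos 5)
Your proposal is correct and follows essentially the same route as the paper: the corollary is obtained by specialising the post-Lie structure $(L_A(\theta),[-,-]^\theta,\rhd)$ to $\theta=0$, where the bracket $[-,-]^\theta=-\theta[-,-]$ vanishes and Corollary \ref{cor:pre-Lie} reduces the post-Lie axioms to the pre-Lie identity \eqref{pre-Lie}, with the Lie bracket statement then following from the general fact that every pre-Lie algebra is a Lie algebra. Your optional direct verification via the weight-zero relation \eqref{RBRLie} and the Jacobi identity is also correct, and is just the paper's own computation for \eqref{postLie2} restricted to $\theta=0$.
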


In general, any non-commutative Rota--Baxter algebra comes therefore naturally with a post-Lie algebra structure. It reduces to a pre-Lie algebra one in the case of zero weight. However, the following result shows that the picture is more involved. Returning to Proposition \ref{prop:link}, we observe

\begin{prop}\label{prop:pre-LieRB}
Let $(A,R)$ be an associative Rota--Baxter algebra of weight $\theta$. For all $x,y \in A$ the product
\begin{equation}
\label{pre-LieNew}
	x \bullet_\theta y := R(x)y + y\tilde{R}(x)
\end{equation}
satisfies the left pre-Lie identity \eqref{pre-Lie}.
\end{prop}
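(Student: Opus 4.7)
The natural strategy is to realize $\bullet_\theta$ as the pre-Lie product associated (via Proposition~\ref{prop:shufpreLie}) to a shuffle algebra structure on $A$, thereby generalizing Proposition~\ref{prop:link} from weight $1$ to arbitrary weight $\theta$. Concretely, I would define the two half-shuffles
$$
a \prec b := -a\,\tilde{R}(b), \qquad a \succ b := R(a)\,b,
$$
and verify the three shuffle axioms \eqref{shuffleNC}. Granted these, Proposition~\ref{prop:shufpreLie} gives immediately that
$$
x \triangleright y := x \succ y - y \prec x = R(x)\,y + y\,\tilde{R}(x) = x \bullet_\theta y
$$
is left pre-Lie, which is what we want.

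The shuffle axioms reduce to straightforward identities once one exploits two facts from the preceding material: that $\tilde{R}$ is itself Rota--Baxter of weight $\theta$ (so $\tilde{R}(b)\tilde{R}(c) = \tilde{R}(\tilde{R}(b)c + b\tilde{R}(c) - \theta bc)$), and the mixed relations \eqref{mixedRBR}. The axiom $(a \succ b) \prec c = a \succ (b \prec c)$ is a tautology from the associativity of the product in $A$, since both sides equal $-R(a)\,b\,\tilde{R}(c)$. For the first axiom, $(a \prec b) \prec c = a \prec (b \prec c + b \succ c)$, I would expand the left-hand side as $a\,\tilde{R}(b)\tilde{R}(c) = a\,\tilde{R}(\tilde{R}(b)c + b\tilde{R}(c) - \theta bc)$ using the Rota--Baxter property of $\tilde{R}$, and expand the right-hand side as $a\,\tilde{R}(b\tilde{R}(c) - R(b)c)$; the two coincide after substituting $R(b) = \theta b - \tilde{R}(b)$. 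For the third axiom, $a \succ (b \succ c) = (a \prec b + a \succ b) \succ c$, the right-hand side becomes $\bigl(R(R(a)b) - R(a\tilde{R}(b))\bigr)c$; using \eqref{mixedRBR} to rewrite $R(a\tilde{R}(b)) = R(a)\tilde{R}(b) - \tilde{R}(R(a)b)$ and then $R + \tilde{R} = \theta\,\mathrm{Id}$ collapses this to $R(a)R(b)\,c$, which matches the left-hand side.

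The only mildly delicate step is the first axiom, as it is the unique place where the Rota--Baxter relation is genuinely used (applied to the pair $(\tilde{R},\tilde{R})$); the rest is bookkeeping with $R + \tilde{R} = \theta\,\mathrm{Id}$ and associativity in $A$. Once the shuffle axioms are in hand, the pre-Lie identity \eqref{pre-Lie} for $\bullet_\theta$ is automatic from Proposition~\ref{prop:shufpreLie}, with no further computation required. As sanity checks, the construction specializes correctly: for $\theta = 0$, $\tilde{R} = -R$ and $\bullet_0(x,y) = R(x)y - yR(x) = [R(x),y]$, recovering the weight-zero pre-Lie product of the corollary to Proposition~\ref{prop:shufpreLie}; for $\theta = 1$, we recover precisely the shuffle algebra of Proposition~\ref{prop:link}.
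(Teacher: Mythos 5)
Your proposal is correct: all three shuffle axioms check out with $a \prec b := -a\tilde{R}(b) = aR(b)-\theta ab$ and $a \succ b := R(a)b$ (I verified the computations; axiom \eqref{shuffle2} is associativity, axiom \eqref{shuffle1} is the weight-$\theta$ Rota--Baxter relation for $\tilde{R}$ plus $R=\theta\,\mathrm{Id}-\tilde{R}$, and axiom \eqref{shuffle3} is the mixed relation \eqref{mixedRBR} plus $R+\tilde{R}=\theta\,\mathrm{Id}$), and Proposition~\ref{prop:shufpreLie} then delivers the pre-Lie identity for $x\succ y - y\prec x = R(x)y+y\tilde{R}(x)=x\bullet_\theta y$. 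The paper itself gives no argument -- it declares the proof ``a simple exercise'' and only records the rewriting $x\bullet_\theta y=[R(x),y]+\theta yx$, which suggests the intended route is a direct expansion of the associators in \eqref{pre-Lie} using the Rota--Baxter relation. Your route is genuinely different and arguably preferable: it factors the statement through a weight-$\theta$ generalization of Proposition~\ref{prop:link} (which the paper states only for $\theta=1$), so the pre-Lie structure is explained as the one canonically induced by a shuffle splitting of $\ast_\theta$ rather than verified by brute force; the price is that you must check the three shuffle axioms, but each is a one-line consequence of identities the paper has already established. The sanity checks at $\theta=0$ and $\theta=1$ are also correct.
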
 

We leave the proof as a simple exercise. Note that $x \bullet_\theta  y =  [R(x),y] + \theta yx$ and that this pre-Lie product gives the Lie bracket in \eqref{doubleLie}. In fact, both left and right pre-Lie products can be defined, since $x\, {}_\theta\!\!\bullet\! y:=- y\!\bullet_\theta\! x$ is right pre-Lie. For $\theta = 0$, we have $x \!\bullet_0\! y = [R(x),y]:=R(x)y-yR(x)$. Obviously, in the commutative case the pre-Lie product in \eqref{pre-LieNew} simplifies to the original product of the algebra (up to a sign and scaling by the weight $\theta$). Also, note that in a proper Lie Rota--Baxter algebra $(\mathfrak{g},R)$ the statement of Proposition \ref{prop:pre-LieRB} is not available.


\subsection{Atkinson's factorisation and Bogoliubov's recursion}
\label{ssect:Atkinson}

We turn now to typical problems in the theory of Rota--Baxter algebras, closely related to the behaviour of time-ordered exponentials -- recall that when dealing with weight zero Rota--Baxter algebras, the latter are the sums of iterated integrals that we investigated at the beginning of this article.


Let $A$ be a unital Rota--Baxter algebra of weight $\theta$. Both Atkinson \cite{Atkinson} and Baxter \cite{Baxter} studied the following fixpoint equations for $x \in A$:
\begin{equation}
\label{atkinson}
	\harpoonl{\exp}_{{\!\scriptscriptstyle{R}}}(\lambda x)
	= 1+ \lambda R\big(\harpoonl{\exp}_{{\!\scriptscriptstyle{R}}}(\lambda x)\, x\big), 
	\qquad\  
	\harpoonr{\exp}_{\tilde{{\!\scriptscriptstyle{R}}}}(\lambda x) 
	= 1+ \lambda \tilde{R}\big(x\, \harpoonr{\exp}_{\tilde{{\!\scriptscriptstyle{R}}}}(\lambda x)\big).
\end{equation}
Notice that setting $\lambda =1$, taking for $R$ the indefinite Riemann integral (a weight zero Rota--Baxter operator), and replacing $x$ by $H(t)$, a time-dependent matrix (with, for example, bounded $C^\infty$ coefficients), one gets by a perturbative expansion for $\harpoonl{\exp}_{{\!\scriptscriptstyle{R}}}(\lambda x)$ the time-ordered exponential associated to the differential equation
$$
	\dot{M}(t)=M(t)H(t).
$$
The formal parameter $\lambda$ is introduced to circumvent any discussions regarding convergence or invertibility issues. Identity \eqref{mixedRBR} implies the central result in $A[[\lambda]]$:
$$
	\harpoonl{\exp}_{\!\scriptscriptstyle{R}}(\lambda x) 
	\harpoonr{\exp}_{\tilde{\!\scriptscriptstyle{R}}}(\lambda x) 
	=1 + \lambda\theta\, \harpoonl{\exp}_{\!\scriptscriptstyle{R}}(\lambda x)\, x\, \harpoonr{\exp}_{\tilde{\!\scriptscriptstyle{R}}}(\lambda x), 
$$  
which we check directly
\begin{align*}
	\lefteqn{\harpoonl{\exp}_{\!\scriptscriptstyle{R}}(\lambda x)
	\harpoonr{\exp}_{\tilde{{\!\scriptscriptstyle{R}}}}(\lambda x) =
	\big(1+ \lambda R\big(\harpoonl{\exp}_{\!\scriptscriptstyle{R}}(\lambda x)\, x\big)\big)
	\big(1 + \lambda \tilde{R}\big(x\, \harpoonr{\exp}_{\tilde{{\!\scriptscriptstyle{R}}}}(\lambda x)\big)\big)} \\
	&= 1 + \lambda R\big(\harpoonl{\exp}_{\!\scriptscriptstyle{R}}(\lambda x)\, x\big) 
		+ \lambda \tilde{R}\big(x\, \harpoonr{\exp}_{\tilde{{\!\scriptscriptstyle{R}}}}(\lambda x)\big) 
		 + \lambda^2 R\big(\harpoonl{\exp}_{\!\scriptscriptstyle{R}}(\lambda x)\, x\big)
		 \tilde{R}\big(x\, \harpoonr{\exp}_{\tilde{{\!\scriptscriptstyle{R}}}}(\lambda x)\big) \\
	&= 1 + \lambda R\big(\harpoonl{\exp}_{\!\scriptscriptstyle{R}}(\lambda x)\, x\big) 
		+ \lambda \tilde{R}\big(x\, \harpoonr{\exp}_{\tilde{{\!\scriptscriptstyle{R}}}}(\lambda x)\big) \\
	&\qquad	
	+ \lambda^2 R\big(\harpoonl{\exp}_{\!\scriptscriptstyle{R}}(\lambda x)\, 
	x\, \tilde{R}\big(x\, \harpoonr{\exp}_{\tilde{{\!\scriptscriptstyle{R}}}}(\lambda x)\big)\big)
	+ \lambda^2 \tilde{R}\big(R\big(\harpoonl{\exp}_{\!\scriptscriptstyle{R}}(\lambda x)\, x\big)\,
	x\, \harpoonr{\exp}_{\tilde{{\!\scriptscriptstyle{R}}}}(\lambda x)\big)\\
	&= 1 + \lambda R\big(\harpoonl{\exp}_{\!\scriptscriptstyle{R}}(\lambda x)\, 
	x\, \harpoonr{\exp}_{\tilde{{\!\scriptscriptstyle{R}}}}(\lambda x)\big) 
		+ \lambda \tilde{R}\big(\harpoonl{\exp}_{\!\scriptscriptstyle{R}}(\lambda x)\, 
		x\, \harpoonr{\exp}_{\tilde{{\!\scriptscriptstyle{R}}}}(\lambda x)\big) \\	
	&= 1 + \lambda\theta \, \harpoonl{\exp}_{\!\scriptscriptstyle{R}}(\lambda x)\, 
	x\, \harpoonr{\exp}_{\tilde{{\!\scriptscriptstyle{R}}}}(\lambda x).	
\end{align*}
This then yields Atkinson's factorisation:
\begin{equation}
\label{Atkins}
	1-\lambda \theta x = \harpoonl{\exp}_{\!\scriptscriptstyle{R}}(\lambda x)^{-1}
	\harpoonr{\exp}_{\tilde{{\!\scriptscriptstyle{R}}}}(\lambda x)^{-1}.
\end{equation}
Moreover, note that the inverses are given through the following formulas:
$$
	\harpoonl{\exp}_{\!\scriptscriptstyle{R}}(\lambda x)^{-1}
	=1 -  \lambda R\big(x\, \harpoonr{\exp}_{\tilde{{\!\scriptscriptstyle{R}}}}(\lambda x)\big)
$$ 
and 
$$
	\harpoonr{\exp}_{\tilde{{\!\scriptscriptstyle{R}}}}(\lambda x) ^{-1}
	=1 -  \lambda\tilde{R}\big(\harpoonl{\exp}_{\!\scriptscriptstyle{R}}(\lambda x) \, x\big).
$$ 
Let us check the first statement:
\begin{align*} 
	&\harpoonl{\exp}_{{\!\scriptscriptstyle{R}}}(\lambda x)
	\big(1 - \lambda R\big(x\, \harpoonr{\exp}_{\tilde{{\!\scriptscriptstyle{R}}}}(\lambda x)\big)\big)
	=1 + \lambda R\big(\harpoonl{\exp}_{{\!\scriptscriptstyle{R}}}(\lambda x)\, x\big)
		- \lambda R\big(x\, \harpoonr{\exp}_{\tilde{{\!\scriptscriptstyle{R}}}}(\lambda x)\big)\\
	&\hspace{6cm}	-\lambda^2 R\big(\harpoonl{\exp}_{\!\scriptscriptstyle{R}}(\lambda x)\, x\big)
		R\big(x\, \harpoonr{\exp}_{\tilde{{\!\scriptscriptstyle{R}}}}(\lambda x)\big)\\
	&=1 + \lambda R\big(\harpoonl{\exp}_{{\!\scriptscriptstyle{R}}}(\lambda x)\, x\big)
		- \lambda R\big(x\, \harpoonr{\exp}_{\tilde{{\!\scriptscriptstyle{R}}}}(\lambda x)\big)
		-\lambda^2 R\big(\harpoonl{\exp}_{\!\scriptscriptstyle{R}}(\lambda x)\, x \,
		R\big(x\, \harpoonr{\exp}_{\tilde{{\!\scriptscriptstyle{R}}}}(\lambda x)\big)\\
	&\qquad	-\lambda^2 R\big(R\big(\harpoonl{\exp}_{\!\scriptscriptstyle{R}}(\lambda x)\, x\big)
		\, x\, \harpoonr{\exp}_{\tilde{{\!\scriptscriptstyle{R}}}}(\lambda x)\big)
		+ \lambda^2 \theta R\big(\harpoonl{\exp}_{\!\scriptscriptstyle{R}}(\lambda x)
		\, x^2\, \harpoonr{\exp}_{\tilde{{\!\scriptscriptstyle{R}}}}(\lambda x)\big)\\
	&=1 + \lambda R\big(\harpoonl{\exp}_{{\!\scriptscriptstyle{R}}}(\lambda x)\, x\big)
		-\lambda R\big(\harpoonl{\exp}_{\!\scriptscriptstyle{R}}(\lambda x)
		\, x\, \harpoonr{\exp}_{\tilde{{\!\scriptscriptstyle{R}}}}(\lambda x)\big)		
		+\lambda^2 R\big(\harpoonl{\exp}_{\!\scriptscriptstyle{R}}(\lambda x)\, x \,
		\tilde{R}\big(x\, \harpoonr{\exp}_{\tilde{{\!\scriptscriptstyle{R}}}}(\lambda x)\big)\\
	&=1   -\lambda R\big(\harpoonl{\exp}_{\!\scriptscriptstyle{R}}(\lambda x)
		\, x\, \harpoonr{\exp}_{\tilde{{\!\scriptscriptstyle{R}}}}(\lambda x)\big)		
		+\lambda R\big(\harpoonl{\exp}_{\!\scriptscriptstyle{R}}(\lambda x)\, x \,
		\harpoonr{\exp}_{\tilde{{\!\scriptscriptstyle{R}}}}(\lambda x)\big)\\
	&=1.
\end{align*}
The other identities follow by similar computations

\smallskip

Bogoliubov's recursion was deviced completely independently of the theory of Rota--Baxter algebras as a way to compute the so-called counterterm and renormalised amplitudes in perturbative quantum field theory (pQFT). From the work of Connes and Kreimer it follows that it can be rephrased in terms of Rota--Baxter algebras -- see e.g. \cite{EFGP} for an account of the relations between the theory of renormalisation and Rota--Baxter algebra, Hopf algebras as well as group and Lie-theoretical structures.

Let us assume, to fit with pQFT, that $\theta=1$ and that $R$ is an idempotent operator ($R^2=R$) such that furthermore $R(1)=0$. We also assume that the Rota--Baxter algebra is graded as an algebra, with $R$ of degree $0$ as a linear endomorphism and $x=x_1+\dots+x_n+\cdots$ and $x_i$ of degree $i$. The standard example of such a situation, relevant for renormalisation, is when $A$ is an algebra of linear forms on a graded connected coassociative coalgebra with values in Laurent series and $R$ is the projection onto the regular part (the product of linear forms being the convolution product induced by the coalgebra coproduct).

We also omit the parameter $\lambda$ and set $f:=\harpoonl{\exp}_{\!\scriptscriptstyle{R}}( x),\ h:=\harpoonr{\exp}_{\!\scriptscriptstyle{\tilde R}}( x)$.
Bogoliubov's formulas compute the inverse $h^{-1}$ as well as $f$ (the latter is called ``counterterm'' in renormalisation). We sketch it briefly. Notice first that from $R^2=R$ we get $-R\tilde R=R(R - Id)=0=(R- Id)R=-\tilde R R$. So that from $f=1+R(fx)$ we obtain $\tilde R(f)=1$. From $f(1-x)=h^{-1},$ we get :
$$
	h^{-1}=-fx+1+R(fx)
	=1-\tilde R(fx).
$$
\begin{lem}[Bogoliubov recursion]
The two resulting coupled equations
$$
	h^{-1}=1-\tilde{R}(fx),\quad\ f=1 + R(fx)
$$
can then be solved recursively since, for $n>0$,
$$
	h^{-1}_n=-\tilde R(x_n)
	-\sum_{0<i<n}\tilde{R}(f_ix_{n-i}),\quad\ f_n=R(x_n)+\sum_{0<i<n}  R(f_ix_{n-i}),
$$
where we denote with an index $i$ the $i$-th term of each series.
\end{lem}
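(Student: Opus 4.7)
The plan is to expand both fixed-point equations in homogeneous components and read off the recursion degree by degree. Writing $f = 1 + \sum_{n \geq 1} f_n$ and $h^{-1} = 1 + \sum_{n \geq 1} h^{-1}_n$ with $f_n, h^{-1}_n$ of degree $n$, and using $x = \sum_{n \geq 1} x_n$ (there is no degree-$0$ contribution of $x$ because $R(1) = 0$ would otherwise force an inconsistency at the lowest degree), I would first compute the degree-$n$ piece of the product $fx$. Since the grading is compatible with the algebra product, I would obtain
\[
	(fx)_n = x_n + \sum_{0 < i < n} f_i\, x_{n-i}.
\]

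Next, I would use the fact that the linear endomorphism $R$ has degree $0$, so $R$ commutes with projection onto the degree-$n$ component, and likewise for $\tilde R = \mathrm{Id} - R$. Projecting the identity $f = 1 + R(fx)$ onto degree $n \geq 1$ then yields
\[
	f_n = R\big((fx)_n\big) = R(x_n) + \sum_{0<i<n} R\big(f_i\, x_{n-i}\big),
\]
and projecting $h^{-1} = 1 - \tilde R(fx)$ onto degree $n \geq 1$ yields the corresponding formula for $h^{-1}_n$.

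The content of the lemma is then that these formulas actually define $f_n$ and $h^{-1}_n$ recursively: I would conclude by observing that the right-hand sides involve only $f_i$ with $i < n$, so the recursion is well-posed and determines the series $f$ and $h^{-1}$ uniquely from $x$. There is no real obstacle here; the only point that deserves attention is the absence of an $i = 0$ or $i = n$ term carrying $f_0 = 1$ twice, which is taken care of by isolating the contribution of $x_n$ from the sum (coming from $f_0 x_n$) and noting that $f_n x_0 = 0$ since $x_0 = 0$.
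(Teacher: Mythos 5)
Your proposal is correct and follows exactly the argument the paper intends: the lemma is presented there as an immediate consequence of projecting the coupled equations $f=1+R(fx)$ and $h^{-1}=1-\tilde R(fx)$ onto each degree, using that $R$ (hence $\tilde R$) has degree $0$ and that $x$ has no degree-zero component, so the right-hand sides only involve $f_i$ with $i<n$. Your isolation of the $f_0x_n$ term and the observation that the recursion is well-posed are precisely the (unwritten) content of the paper's proof.
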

 
In low degrees we get $h^{-1}_1=-\tilde{R}(x_1)$, $f_1= R(x_1)$, $h^{-1}_2=-\tilde{R}(f_1x_1+x_2)$, $f_2=R(f_1x_1+x_2)$, and so on.


\subsection{Spitzer's Identity: commutative case}
\label{sect:Spitzer}

The interest in Rota--Baxter algebras results from the possibility to describe interesting and useful universal combinatorial identities that hold in a wide variety of contexts. The most famous example of such an universal combinatorial result is known as Spitzer's classical identity \cite{Spitzer}. 

We have seen that the indefinite Riemann integral operator
$$
	R(f)(t):=\int_0^tf(s)ds,
$$ 
is a natural weight zero Rota--Baxter map, say, on scalar-valued functions. Recall that the solution to the integral fixpoint equation 
$$
	y(t)= 1 + R(fy)(t)
$$ 
is given in terms of the exponential function 
$$
	y(t)=\exp\big(R(f)(t)\big).
$$ 
Going back to Atkinson's factorisation  in terms of the fixpoint equations in \eqref{atkinson}, one may wonder how the simple exponential solution in the particular weight zero case generalises to Rota--Baxter maps of arbitrary weight? Baxter \cite{Baxter} understood that Spitzer's classical identity \cite{Spitzer} provides the answer in the case of commutative Rota--Baxter algebra of arbitrary weight. 

Spitzer's original result provides a way to compute the characteristic function of the extrema of a discrete process $S_i:=X_1+ \cdots +X_i$, defined as a sequence of partial sums of a sequence of independent and identically distributed real valued random variables $X_i$. More precisely, considering the new sequence of random variables $Y_i:=\sup(0,S_1,S_2, \ldots ,S_i)$, Spitzer's formula permits to calculate the characteristic function of $Y_i$ in terms of  the positive part of the partial sums, denoted $S_j^+,\ j\leq i$. Let $F:=\EE[\exp(itX)]$ be the characteristic function of $X:=X_1$ and $R(F):=\EE[\exp(itX^+)]$, where $X^+:=Sup(0,X)$ (and similarly for other random variables as well as on the algebra generated by their characteristic functions). In the setting of Rota--Baxter algebra one then finds:
$$
	\EE[\exp({itY_k})]=R(F\cdot R(F\cdot\  \cdots  R(F \cdot R(F)) \cdots ))=:R^{(k)}(F),
$$
and:
$$
	\EE[\exp({itS_k^+})]=R(F^k).
$$ 

From this one deduces a formal identity which is true in every commutative Rota--Baxter algebra of any weight~$\theta$, namely:
\begin{prop}[Spitzer identity]\label{Spitzer} In a weight $\theta$ commutative Rota--Baxter algebra, we have (formally)
\begin{eqnarray}
\label{SpitzId}
	\log\Big(1+\sum\limits_{n>0}R^{(m)}(F)\Big)
					= R\Big(\sum\limits_{n>0}\frac{\theta^{n-1}F^n}{n}\Big).
\end{eqnarray}
\end{prop}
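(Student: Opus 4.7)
The plan is to treat both sides as formal series in a parameter $\lambda$. Set
$$Y(\lambda) := 1 + \sum_{n\geq 1}\lambda^n R^{(n)}(F), \qquad b(\lambda) := \sum_{n\geq 1}\frac{\lambda^n\theta^{n-1}F^n}{n}, \qquad \Omega(\lambda) := R(b(\lambda)).$$
By the recursive definition of $R^{(n)}$, the series $Y$ satisfies the Atkinson-type fixpoint equation $Y = 1 + \lambda R(FY)$, and the identity to prove specialises at $\lambda = 1$ to \eqref{SpitzId}. Since $Y(0) = 1$, $\Omega(0) = 0$, and $Y$ is invertible in $A[[\lambda]]$, it is enough to show $\dot Y = \dot\Omega\cdot Y$, where the dot denotes $\frac{d}{d\lambda}$.

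From the fixpoint equation, $\dot Y = R(FY) + \lambda R(F\dot Y)$. The core of the argument is to put $\dot\Omega\cdot Y = R(\dot b)\cdot(1+\lambda R(FY))$ in the same form. First, note the elementary identity $\dot b\cdot(1-\lambda\theta F) = F$, which follows at once from the geometric-series expansion of $b$. Then, applying the Rota--Baxter identity \eqref{RBR} to the product $R(\dot b)\cdot R(\lambda F Y)$ gives
$$R(\dot b)\cdot\lambda R(FY) = R\bigl(\dot b(Y-1)\bigr) + \lambda R(\dot\Omega\, FY) - \lambda\theta R(\dot b\, FY).$$
Adding $R(\dot b)$ to both sides and regrouping terms using commutativity of $A$ together with the identity $\dot b(1-\lambda\theta F) = F$ produces exactly
$$\dot\Omega\cdot Y = R(FY) + \lambda R(F\,\dot\Omega\, Y),$$
mirroring the equation satisfied by $\dot Y$.

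The difference $Z := \dot Y - \dot\Omega\, Y$ therefore satisfies the homogeneous fixpoint equation $Z = \lambda R(FZ)$ in $A[[\lambda]]$. Iterating this relation shows that $Z$ is divisible by every power of $\lambda$, hence $Z = 0$. Integrating $\frac{d}{d\lambda}\log Y = \dot\Omega$ from $\lambda = 0$ and setting $\lambda = 1$ proves the claim. The main obstacle, and the whole conceptual content of the argument, lies in the single Rota--Baxter manoeuvre just described: the three terms produced by \eqref{RBR} must conspire so that the combination $\dot b - \lambda\theta\dot b F = \dot b(1-\lambda\theta F)$ collapses to $F$. It is precisely this coincidence that \emph{forces} the coefficients $\theta^{n-1}/n$ in the definition of $b$ and explains their appearance in Spitzer's formula. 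Commutativity of $A$ is used exactly at the step of moving $\dot\Omega$ past $F$ inside $R$, which already hints at why a non-commutative generalisation of \eqref{SpitzId} will require a genuinely different substitute for the logarithm.
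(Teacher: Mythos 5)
Your proof is correct, and it takes a genuinely different route from the one the paper relies on. The paper states Proposition~\ref{Spitzer} without proof, deriving it from the probabilistic setting and, in Section~\ref{ssect:freecRB}, from Rota's realisation of the free commutative Rota--Baxter algebra inside sequences of polynomials, where $R^{(n)}(x)$ and $R(x^n)$ become elementary symmetric functions and power sums respectively, so that the identity reduces to the classical Waring formula. You instead give a direct, self-contained argument inside an arbitrary commutative Rota--Baxter algebra: differentiate the Atkinson fixpoint equation $Y=1+\lambda R(FY)$ with respect to the formal parameter, apply the weight-$\theta$ relation \eqref{RBR} once to $R(\dot b)\cdot\lambda R(FY)$, and observe that the three resulting terms regroup via $R(\dot b\,Y)-\lambda\theta R(\dot b\,FY)=R(\dot b(1-\lambda\theta F)Y)=R(FY)$, which is exactly where the coefficients $\theta^{n-1}/n$ are pinned down; the uniqueness of the solution of the homogeneous fixpoint equation $Z=\lambda R(FZ)$ in $A[[\lambda]]$ then finishes the job. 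All the individual steps check out: $\dot b=F(1-\lambda\theta F)^{-1}$, the $\lambda$-adic argument forcing $Z=0$, and the commutative identity $\frac{d}{d\lambda}\log Y=\dot Y Y^{-1}$ are each valid in the formal setting the proposition works in. What your approach buys is independence from the theory of symmetric functions and from the word problem for free commutative Rota--Baxter algebras; what it loses is the structural insight the paper emphasises (the Hopf-algebraic reading in which the left-hand side is the logarithm of a group-like element and the right-hand side is primitive). Your closing remark that commutativity enters only when moving $\dot\Omega$ past $F$ inside $R$ correctly anticipates why the non-commutative case in Proposition~\ref{prop:ncSpitzer} requires the pre-Lie Magnus expansion.
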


Observe that the argument of the Rota--Baxter map $R$ on the right hand side of \eqref{SpitzId} is a logarithm
$$
	\Omega_\theta'(F):=\sum\limits_{n>0}\frac{\theta^{n-1}F^n}{n}=-\theta^{-1}\log(1 - \theta F).
$$
Since $F=\theta^{-1}(\exp({\theta\Omega_\theta'(F)})-1)$,
one can express $\Omega_\theta'(F)$ in terms of the generating function of the Bernoulli numbers $\frac{x}{\mathrm{e}^x-1}$: 
\begin{equation}
\label{magnus}
	\Omega_\theta'(F) 
	= \frac{\ell_{\theta \Omega_\theta'(F)}}{\mathrm{e}^{\ell_{\theta \Omega_\theta'(F)}}-1}(F)
	= F + \sum_{n > 0} \frac{B_n}{n!} \ell_{-\theta \Omega_\theta'(F)}^{n}(F).
\end{equation}
Here $\ell$ is the left multiplication operator, $\ell_{x}(y):=xy$, and $B_n$ are the Bernoulli numbers. 

Going back to \eqref{atkinson} and using that $1+\sum\limits_{n>0}R^{(m)}(\lambda x)$ is the perturbative expansion of the solution of the fixpoint equation $\harpoonl{\exp}_{{\!\scriptscriptstyle{R}}}(\lambda x)=1+\lambda R(\harpoonl{\exp}_{{\!\scriptscriptstyle{R}}}(\lambda x)x)$, this result implies for a general commutative Rota--Baxter algebra $(A,R)$ that 
$$
	\harpoonl{\exp}_{{\!\scriptscriptstyle{R}}}(\lambda x)
	= \exp\big(R(\Omega_\theta'(\lambda x))\big) 
	\qquad\  
	\harpoonl{\exp}_{{\!\scriptscriptstyle{R}}}(\lambda x)^{-1}
	= \exp\big(-R(\Omega_\theta'(\lambda x))\big),  
$$
and similarly for $\harpoonr{\exp}_{\tilde{\!\scriptscriptstyle{R}}}(\lambda x)$. In the commutative case, Atkinson's factorisation then becomes an obvious statement
\begin{align}
\label{Atkins2}
	1-\lambda \theta x 
	&= \exp\big(-R(\Omega_\theta'(\lambda x))\big)\exp\big(-\tilde{R}(\Omega_\theta'(\lambda x))\big)\\
	&= \exp\big(-R(\Omega_\theta'(\lambda x)) -\tilde{R}(\Omega_\theta'(\lambda x))\big)\nonumber\\
	&= \exp\big(-\theta \Omega_\theta'(\lambda x))\big)\nonumber\\
	&= \exp\big(\log(1 - \lambda \theta x)\big).
\end{align}

The expression \eqref{magnus} will remind the reader of Magnus' classical formula for the logarithm of the fundamental solution of a first order matrix or operator differential equation \eqref{naiveMagnus}. See \cite{BCOR08} for a comprehensive review on the Magnus expansion and applications. Although the expression appears to be somewhat pointless in the commutative case, it will allow later for a straightforward generalisation of the above identity to non-commutative Rota--Baxter algebras. 

\smallskip

Deeper into the combinatorial nature of commutative Rota--Baxter algebras goes the 

\begin{prop}[Bohnenblust--Spitzer formula]
In a commutative weight $\theta$ Rota--Baxter algebra, we have the following identity
\begin{equation}
\label{clBSp}
	\sum_{\sigma \in S_n}\!\! R\bigl(\cdots R(F_{\sigma(1)})F_{\sigma(2)} \cdots \bigr)F_{\sigma(n)} 
			= \sum_{\pi \in \mathcal{P}_n}\! \theta^{n -|\pi|}\! \sideset{}{^{*_{\theta}}}
							\prod_{\pi_i \in \pi}(b_i-1)! \, \Bigl(\prod_{j \in \pi_i}F_j\Bigr),   
\end{equation}
where $S_n$ is the symmetric group of order $n$. On the right hand side, $\mathcal{P}_n$ denotes set partitions of $[n]:=\{1,\ldots,n\}$, and $|\pi|$ is the number of blocks of the partition $\pi \in \mathcal{P}_n$. The size of the $i$th block $\pi_i \in \pi$ is denoted $b_i$.
\end{prop}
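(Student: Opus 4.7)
My plan is to derive both sides of \eqref{clBSp} as multilinear specialisations of the single generating series $F(t)\exp\!\bigl(R(\Omega'_\theta(F(t)))\bigr)$, where $F(t):=\sum_{i=1}^n t_iF_i$, and then match them via a signed-partition identity. The analytic input is Spitzer's identity (Proposition~\ref{Spitzer}); the combinatorial core is the vanishing identity $\sum_{\pi\in\mathcal{P}_n}(-1)^{|\pi|-1}\prod_i(b_i-1)!=\delta_{n,1}$.

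Denote the left-hand side of \eqref{clBSp} by $\Phi_n$. Polarising $R^{(n-1)}(F)\cdot F$ at $F=F(t)$, the coefficient of $t_1\cdots t_n$ distributes the $n$ variables among the $n-1$ nested slots of $R^{(n-1)}$ and the outer factor, and (using commutativity to reverse the nested order inside each $R$) equals precisely $\Phi_n$. Spitzer's identity yields $1+\sum_{m\geq 1}R^{(m)}(F)=\exp\!\bigl(R(\Omega'_\theta(F))\bigr)$, so $\Phi_n=[t_1\cdots t_n]\,F(t)\exp\!\bigl(R(\Omega'_\theta(F(t)))\bigr)$. Since $(\sum_it_iF_i)^r=r!\sum_{|S|=r}\prod_{j\in S}t_jF_j$ modulo non-multilinear terms, the multilinear part of $\Omega'_\theta(F(t))$ indexed by $S\subseteq[n]$ is $\theta^{|S|-1}(|S|-1)!\prod_{j\in S}F_j$. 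Expanding $\exp(R(\Omega'_\theta(F(t))))$ as $\sum_k R(\Omega'_\theta)^k/k!$ and converting ordered partitions into unordered ones through the $1/k!$ factor gives
\begin{equation*}
\exp\!\bigl(R(\Omega'_\theta(F(t)))\bigr)\Big|_{\text{multilin in }S}=\sum_{\pi\in\mathcal{P}_S}\theta^{|S|-|\pi|}\prod_i(b_i-1)!\,R\!\left(\prod_{j\in\pi_i}F_j\right),
\end{equation*}
whence
\begin{equation*}
\Phi_n=\sum_{i=1}^n F_i\sum_{\pi'\in\mathcal{P}_{[n]\setminus\{i\}}}\theta^{(n-1)-|\pi'|}\prod_j(|\pi'_j|-1)!\,R\!\left(\prod_{k\in\pi'_j}F_k\right).
\end{equation*}

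For the right-hand side, I first expand the $\ast_\theta$-product. A short induction on $k$ using $R(x\ast_\theta y)=R(x)R(y)$ shows
\begin{equation*}
A_1\ast_\theta\cdots\ast_\theta A_k=\sum_{\emptyset\neq U\subseteq[k]}(-\theta)^{|U|-1}\prod_{i\in U}A_i\prod_{i\notin U}R(A_i).
\end{equation*}
Substituting $A_i=\prod_{j\in\pi_i}F_j$ into the right-hand side of \eqref{clBSp} and letting $T:=\bigcup_{i\in U}\pi_i\neq\emptyset$, the blocks in $U$ form a partition $\rho\in\mathcal{P}_T$ and those not in $U$ form a partition $\rho'\in\mathcal{P}_{T^c}$, with $|\rho|=|U|$ and $|\pi|=|\rho|+|\rho'|$. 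Collecting $\theta$-powers, the right-hand side rearranges into
\begin{equation*}
\sum_{\emptyset\neq T\subseteq[n]}\prod_{i\in T}F_i\cdot\theta^{|T|-1}\!\left(\sum_{\rho\in\mathcal{P}_T}(-1)^{|\rho|-1}\prod_i(|\rho_i|-1)!\right)\cdot\sum_{\rho'\in\mathcal{P}_{T^c}}\theta^{|T^c|-|\rho'|}\prod_j(|\rho'_j|-1)!\prod_j R\!\left(\prod_{k\in\rho'_j}F_k\right).
\end{equation*}

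The $\rho$-sum collapses by the combinatorial identity mentioned above: recognising $\prod(|\rho_i|-1)!$ as the number of permutations of $T$ of cycle-type $\rho$, the signed sum becomes $\sum_{\sigma\in S_{|T|}}(-1)^{c(\sigma)-1}=(-1)^{|T|-1}\sum_\sigma\mathrm{sgn}(\sigma)$, which vanishes for $|T|\geq 2$ and equals $1$ for $|T|=1$. Only singletons $T=\{i\}$ therefore survive, and the surviving terms coincide with the expression for $\Phi_n$ derived above. The main obstacle is recognising this signed-partition identity—it is the combinatorial shadow of $-\exp(\log(1-x))+1=x$ at the level of exponential generating functions, and is essentially what allows Spitzer's identity itself to be inverted into the desired multilinear form; once identified, everything else is bookkeeping of $\theta$-powers and set partitions.
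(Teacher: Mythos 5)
Your proof is correct, and it does not follow a route the paper itself spells out: the paper states the Bohnenblust--Spitzer formula without proof, deferring implicitly to Rota's derivation via symmetric functions and the Waring identity \cite{Rota1} and to the pre-Lie/descent-algebra proof of the non-commutative version in \cite{EFMP}. Your argument instead deduces \eqref{clBSp} by polarising Spitzer's identity (Proposition \ref{Spitzer}): extract the coefficient of $t_1\cdots t_n$ in $F(t)\exp\bigl(R(\Omega'_\theta(F(t)))\bigr)$ with $F(t)=\sum_i t_iF_i$, use the exponential formula to write the multilinear part of the exponential as a sum over set partitions, expand the iterated Rota--Baxter product on the right-hand side via $A_1\ast_\theta\cdots\ast_\theta A_k=\sum_{\emptyset\neq U\subseteq[k]}(-\theta)^{|U|-1}\prod_{i\in U}A_i\prod_{i\notin U}R(A_i)$, and collapse the result with the signed-partition identity $\sum_{\rho\in\mathcal{P}_T}(-1)^{|\rho|-1}\prod_i(|\rho_i|-1)!=\delta_{|T|,1}$. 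I checked each ingredient: the subset expansion follows by induction from $R(x\ast_\theta y)=R(x)R(y)$ and commutativity, the $\theta$-power bookkeeping $n-|\pi|+|U|-1=(|T|-1)+(|T^c|-|\rho'|)$ is right, and the sign identity is correctly reduced to $\sum_{\sigma\in S_T}\mathrm{sgn}(\sigma)=0$ for $|T|\ge 2$. What this buys is a short, self-contained derivation of the multilinear identity from its diagonal specialisation, at the price of taking Spitzer's identity as input rather than obtaining both from the structure of the free object. Two small points worth making explicit: (i) to apply Proposition \ref{Spitzer} to $F(t)$ you should note that $A\otimes k[t_1,\ldots,t_n]$, with $R$ extended $t$-linearly, is again a commutative Rota--Baxter algebra of weight $\theta$, so polarisation is legitimate; (ii) the parenthetical about ``using commutativity to reverse the nested order'' is not needed, since the multilinear coefficient of $R^{(n-1)}(F(t))F(t)$ is already the sum over all assignments of variables to slots, i.e.\ exactly the left-hand side of \eqref{clBSp}.
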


Note that the product $\sideset{}{^{*_{\theta}}}\prod$ is defined in terms of the Rota--Baxter product in (\ref{doubleasso}). For instance
$$
	\sum_{\sigma \in S_2} R(F_{\sigma(1)})F_{\sigma(2)} 
	= R(F_{1})F_{2} + R(F_{2})F_{1} = \theta F_{1}F_{2} + F_{1} *_{\theta} F_{2},
$$
which is the weight $\theta$ Rota--Baxter identity up to applying $R$ on both sides. In the weight zero case, i.e., for $\theta=0$ the Bohnenblust--Spitzer formula simplifies to the obvious identity: 
$$
	\sum_{\sigma \in S_n} R\bigl(\cdots R( R(F_{\sigma(1)})F_{\sigma(2)})\cdots \bigr) F_{\sigma(n)}
	= \sideset{}{ ^{*_{0}}}\prod^n_{i=1} F_i.
$$

With the goal to generalise identity (\ref{clBSp}) to non-commutative Rota--Baxter algebras, we again propose a slightly more involved rewriting. Recall the canonical cycle decomposition $c_1 \cdots c_k$ of a permutation $\sigma \in S_n$. Each cycle is written starting with its maximal element, and the cycles are written in increasing order of their first entries. For example, $(32)(541)(6)(87)$ is such canonical cycle decomposition. The $j$th cycle is denoted $c_j=(a_{j_0} a_{j_1}\cdots a_{j_{m_j-1}})$, where $m_j$ is the size of this cycle and $a_{j_0}>a_l$, ${j_1} \le l \le j_{m_j-1}$.    

Identity (\ref{clBSp}) can be then restated as follows:
\begin{equation}
\label{clBSpPerm}
	\sum_{\sigma \in S_n} R\bigl(\cdots R( R(F_{\sigma(1)})F_{\sigma(2)})\dots \bigr)F_{\sigma(n)} 
	=	\sum_{\sigma \in S_n}  \mathcal{D}^\theta_{\sigma}(F_1,\ldots,F_n), 
\end{equation}  
where for each permutation $\sigma$ written in its canonical cycle decomposition $c_1 \cdots c_k$ we define:
\begin{equation}
\label{lr}
	\mathcal{D}^\theta_{\sigma}(F_1,\ldots,F_n) 
	:= \sideset{}{^{*_{\theta}}}\prod_{j=1}^{k}\Big( \big(\sideset{}{^{\circ}}\prod_{i=1}^{m_j-1}r_{\theta F_{a_{j_i}}}\big)(F_{a_{j_0}}) \Big).
\end{equation}   
Now the operator $r$ denotes right multiplication, that is, $r_{x}(y):=yx$. Note that the second product on the right hand side is with respect to the composition of these multiplication operators. For instance, the permutation $\sigma :=(43)(512) \in S_5$ yields the term:
$$
	\mathcal{D}^\theta_{\sigma}(F_1,\ldots,F_5) 
	= r_{\theta F_{3}}(F_{4}) *_\theta (r_{\theta F_{2}}r_{\theta F_{1}})(F_{5})  
	= \theta^3 (F_4 F_3)*_{\theta}(F_5 F_1F_2).
$$  
In the commutative setting, the product $(\prod_{i=1}^{m_j-1}r_{\theta F_{a_{j_i}}})(F_{a_{j_0}}) $ is independent of the order of the $a_{j_i},\ i\geq 1$. Therefore, several of the terms $\mathcal{D}^\theta_{\sigma}(F_1,\ldots,F_n)$ on the right hand side of (\ref{clBSpPerm}) may coincide. The resulting coefficients $\prod_{j=1}^{k}(m_j-1)!$ lead to the compact form (\ref{clBSp}) written in terms of set partitions. Indeed, choose a block $\pi_j$, say, of size $m_j$, of a partition $\pi$. Its first entry is fixed to be its maximal element. This block then corresponds to $(m_j-1)!$ different cycles of size $m_j$.


\subsection{Free commutative Rota--Baxter algebras}
\label{ssect:freecRB}

The construction of free Rota--Baxter algebras was obtained independently by Cartier and Rota.
One of the main motivations underlying their works, and especially the one of Rota, was to deduce universal identities such as Spitzer's. Their two approaches are rather complementary. Cartier's construction \cite{Cartier} of a free commutative Rota--Baxter algebra is a forerunner of the modern notion of the quasi-shuffle algebra (Example \ref{QSprod}) and provides an abstract inductive way to obtain \eqref{surjectionshuffle}. Cartier's construction was made more explicit in \cite{GK}. 


We feature here Rota's approach \cite{Rota1,Rota2}, that has naturally a Hopf algebraic flavor. It is based on the observation that for any commutative algebra $A$, the algebra $A^{\mathbf N}$ of functions from the non-negative integers into $A$, that is, sequences of elements of $A$ with pointwise product, is naturally equipped with a weight $\theta=-1$ Rota--Baxter algebra structure defined by the operator:      
$$
	R(a_1,\ldots ,a_n,\ldots ) = (0,a_1,a_1+a_2,\ldots ,a_1+\cdots +a_n,\ldots ).
$$
Taking $A$ to be the algebra $k[x_1,\ldots ,x_n,\ldots ]$ of polynomials in an infinite number of indeterminates, one can show that the Rota--Baxter subalgebra generated by the sequence $x:=(x_1,\ldots ,x_n,\ldots )$ is free. This establishes an immediate link to the well-understood classical theory of symmetric functions. In fact, terms such as $R(x^n)$ and $R^{(n)}(x):=R(R^{(n-1)}(x)x), \ R^{(1)}(x):=R(x), \ R^{(0)}(x):=1$, correspond in the theory of symmetric functions to power sums  ($h_n:=\sum_ix_i^n$) and elementary symmetric functions ($e_n:=\sum_{i_1<\cdots <i_n}x_{i_1} \cdots x_{i_n}$), respectively. From this, Rota was able to  conclude that Spitzer's identity is equivalent to the Waring formula, which expresses power sums as polynomials in elementary symmetric functions.

The algebra of symmetric functions $\mathcal S$ is a free commutative algebra over the $e_n$ (or the $h_n$).  Rota's construction shows that there is a canonical embedding of $\mathcal S$ into $RB(x)$, the free Rota--Baxter on one generator. His embedding maps $e_n$ to $R^{(n)}(x)$ and therefore $RB(x)$ contains as a free commutative subalgebra the algebra freely generated by the $R^{(n)}(x)$. Moreover, $\mathcal S$ carries naturally a Hopf algebra structure for which the power sums are primitive ($\Delta(h_n)=h_n\otimes 1+1\otimes h_n$) or, equivalently, the sum of all elementary symmetric functions is group-like (as $\Delta(e_n)=\sum_{i=0}^ne_i\otimes e_{n-i}$). This Hopf algebra structure has many nice and important properties surveyed in Macdonald's \cite{macdo} -- for example, it is inherited from induction and restriction among Young subgroups of symmetric groups. This motivates the

\begin{defn}
The commutative Spitzer algebra is the bicommutative Hopf algebra that, as a commutative algebra, is the free commutative algebra generated by the $R^{(n)}(x)$ in $RB(x)$. The coproduct is given by
$$
	\Delta(R^{(n)}(x)):=\sum\limits_{i=0}^nR^{(i)}(x)\otimes R^{(n-i)}(x).
$$
Equivalently, the commutative Spitzer algebra is freely generated by the $R^{(n)}(x)$ and the coproduct defined by
$$
	\Delta(R(x^n)):=R(x^n)\otimes 1+1\otimes R(x^n).
$$
\end{defn}

The terminology ``Spitzer algebra'' was introduced in \cite{EGBP,EFMP} in the non-commutative setting, we extend it here to the commutative one.

It is interesting to analyse this Hopf algebra structure from the point of view of the Spitzer formula, which says that:
$$
	\log \big(1+\sum_{n>0} R^{(n)}(x)\big)=R\big(-\sum_{n>0}\frac{(-1)^nx^n}{n}\big).
$$
It is a general phenomenon in Hopf algebra, that the logarithm of a group-like element is primitive (and conversely, the exponential of a primitive element is group-like). It follows therefore from the Spitzer formula (without appealing to the theory of symmetric functions) that the two Hopf algebra structures agree. Moreover, Spitzer's formula gains now a group-theoretical flavour, i.e., $1+\sum_{n>0} R^{(n)}(x)$ belongs to the group of group-like elements of the commutative Spitzer algebra, whereas the $R(\frac{x^n}{n})$ belong to the associated Lie algebra. 

Here, the group is commutative and the Lie algebra structure trivial (the bracket is null). The interest of these observation is therefore limited. However, these phenomena will be more interesting in the non-commutative case, where they relate, for example, to the Baker--Campbell--Hausdorff and the Magnus formulas.

\smallskip

Let us return briefly to the first part of this article: the elements  $R^{(n)}(x)$ in the free Rota--Baxter algebra give rise to iterated integrals when $R$ is interpreted as an integral operator and $x$ as a smooth function. From Chen's rule we know that it would be natural to define a Hopf algebra structure in this context by requiring 
$$
	\Delta(R^{(n)}(x)):=\sum\limits_{i=0}^nR^{(i)}(x)\otimes R^{(n-i)}(x).
$$
This is indeed the structure we defined using Rota's embedding of symmetric functions into the free Rota--Baxter algebra. We will deepen this point of view and use it systematically later, when studying non-commutative Rota--Baxter algebras. 



\subsection{Spitzer's Identity: non-commutative case}
\label{ssect:ncSpitzer}

Thanks to the particular way we presented the commutative case of Spitzer's identity, we do not have to dwell too much on the details in the non-commutative setting. Instead, we will try to sketch the techniques as well as theoretical ingredients, that permit a simple transition from the commutative to the non-commutative Spitzer identity.   

\smallskip

The natural question is to understand the expression on the lefthand side of Spitzer's identity, i.e., the logarithm of  $\harpoonl{\exp}_{{\!\scriptscriptstyle{R}}}(\lambda x)$ in \eqref{atkinson}. Notice that, depending on the algebra $A$ and Rota--Baxter operator $R$ under consideration, one may either view this solution as the counterterm in renormalisation theory or the solution of a 1st order linear differential equation in matrix algebra, or the linear fixed point equation of a discrete dynamical system. In particular, in view of the penultimate example, one would expect that a ``non-commutative Spitzer formula'' should solve, among others, the Baker--Campbell--Hausdorff problem (to compute the logarithm of the solution of a 1st order linear differential equation). We will see that this is indeed the case.

Key in the theory of non-commutative Rota--Baxter algebras is the existence of another algebraic structure beside the Rota--Baxter product $\ast_\theta$. Indeed, in \eqref{postLie1}-\eqref{postLie2} and Proposition \ref{prop:pre-LieRB} we saw that any associative Rota--Baxter algebra of weight $\theta$ inherits both post-Lie and (left) pre-Lie algebra structures. 

To understand the role played by the pre-Lie product we return to the Bohnenblust--Spitzer formula, using the description presented in (\ref{clBSpPerm}), which appeared somewhat artificial in the commutative case. For $n=2$ we see quickly that:
$$
	F_1 \ast_\theta F_2 + \theta F_2F_1 = R(F_1)F_2  + R(F_2)F_1
$$
provided that the Rota--Baxter algebra is commutative. In the non-commutative case the slightly less obvious identity holds:  
\begin{eqnarray*}
	F_1 *_{\theta} F_2 + F_2 \bullet_\theta  F_1 = R(F_1)F_2  + R(F_2)F_1.				
\end{eqnarray*} 
This simple procedure of replacing the algebra product by the pre-Lie product, $\bullet_\theta$ defined in Proposition \ref{prop:pre-LieRB}, generalises to all orders. Indeed, it is ``sufficient" to redefine in (\ref{clBSpPerm}) the operator $\mathcal{D}^\theta_\sigma$ by substituting in definition~(\ref{lr}) the operator {\makebox{$r_{\theta x}$ (right product by $\theta x$)} with the operator {\makebox{$r_{{\bullet_\theta} x}$}}, again a right multiplication operator, but defined now in terms of the pre-Lie product, {\makebox{$r_{{\bullet_\theta} x}(y):=y \bullet_\theta x$}}. 

Using the example following the definition~(\ref{lr}), we substitute in the Bohnenblust--Spitzer formula the expression:
$$
	\mathcal{D}^\theta_{\sigma}(F_1,\ldots,F_5) = \theta^3 (F_4 F_3)*_{\theta}(F_5 F_1F_2),
$$  
by the one defined in terms of the pre-Lie product: 
\begin{eqnarray*}
	\mathcal{D}^{\bullet_\theta} _{\sigma}(F_1,\ldots,F_5) 
	&=&r_{\bullet_\theta  F_{3}}(F_{4}) *_\theta r_{{\bullet_\theta}  F_{2}}(r_{\bullet_\theta  F_{1}}(F_{5})) \\ 
	&=& (F_4 \bullet_\theta F_3)*_{\theta}((F_5\bullet_\theta  F_1)\bullet_\theta F_2).
\end{eqnarray*}

\begin{prop}
With this redefinition, the Bohnenblust--Spitzer identity (\ref{clBSpPerm}) holds in any non-commutative Rota--Baxter algebra of weight $\theta$ \cite{EFMP}:
\begin{equation}
	\sum_{\sigma \in S_n} R\bigl(\cdots R( R(F_{\sigma(1)})F_{\sigma(2)})\cdots \bigr)F_{\sigma(n)} 
	=	\sum_{\sigma \in S_n}  \mathcal{D}^{\bullet\theta}_{\sigma}(F_1,\ldots,F_n), 
\end{equation}  
where for each permutation $\sigma$ written in its canonical cycle decomposition $c_1 \cdots c_k$ we define:
\begin{equation}
	\mathcal{D}^{\bullet\theta}_{\sigma}(F_1,\ldots,F_n) 
	:= \sideset{}{^{*_{\theta}}}\prod_{j=1}^{k}\Big(\,\, \big(\,\, \sideset{}{^{\circ}}\prod_{i=1}^{m_j-1}r_{\bullet\theta F_{a_{j_i}}}\big)(F_{a_{j_0}}) \Big).
\end{equation}   
\end{prop}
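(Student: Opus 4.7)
The argument proceeds by induction on $n$. The base case $n=1$ is trivial: $S_1$ consists of the identity, a single fixed point, and both sides reduce to $F_1$. For the inductive step, the left-hand side admits a natural recursion obtained by partitioning $S_n$ according to the value of $\sigma(n)$:
\[
    \mathcal{L}_n := \sum_{\sigma \in S_n} R\bigl(\cdots R(R(F_{\sigma(1)})F_{\sigma(2)})\cdots\bigr)F_{\sigma(n)}
    = \sum_{j=1}^n R\bigl(\mathcal{L}_{n-1}^{(j)}\bigr)\,F_j,
\]
where $\mathcal{L}_{n-1}^{(j)}$ denotes the same expression applied to the reduced list $(F_1,\ldots,\widehat{F_j},\ldots,F_n)$.

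The objective is to show that $\mathcal{R}_n := \sum_{\sigma \in S_n}\mathcal{D}^{\bullet\theta}_\sigma(F_1,\ldots,F_n)$ satisfies the same recursion. I would classify permutations in $S_n$ by the cycle $C$ containing the overall maximum $n$; since canonical cycle form starts each cycle with its largest element and orders cycles by these starts, $C$ is always the last cycle in the decomposition. Writing $C = (n\; b_1\; \cdots\; b_k)$ with $B := \{b_1,\ldots,b_k\} \subseteq [n-1]$ and letting $\sigma'$ be the permutation induced on $[n-1]\setminus B$, the very definition of $\mathcal{D}^{\bullet\theta}_\sigma$ yields the factorisation
\[
    \mathcal{D}^{\bullet\theta}_\sigma(F_1,\ldots,F_n)
    = \mathcal{D}^{\bullet\theta}_{\sigma'}\bigl(\underline{F}_{[n-1]\setminus B}\bigr) *_\theta \bigl((\cdots((F_n \bullet_\theta F_{b_1})\bullet_\theta F_{b_2})\cdots)\bullet_\theta F_{b_k}\bigr).
\]
Summing over all orderings of $B$ and over all $\sigma' \in S_{[n-1]\setminus B}$, and using the inductive hypothesis $\mathcal{R}_S = \mathcal{L}_S$ for $|S|<n$, one obtains
\[
    \mathcal{R}_n = \sum_{B \subseteq [n-1]} \mathcal{L}_{[n-1]\setminus B}\bigl(\underline{F}\bigr) *_\theta \Omega^B(F_n), \qquad \Omega^B(F_n) := \sum_{(b_1,\ldots,b_k) \in \mathrm{Ord}(B)} \bigl((\cdots (F_n \bullet_\theta F_{b_1})\cdots)\bullet_\theta F_{b_k}\bigr).
\]

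The main obstacle is the reconciliation of this $*_\theta$-decomposition of $\mathcal{R}_n$ with the $R(\cdot)F_j$-form of $\mathcal{L}_n$. The key algebraic identities bridging the two are
\[
    R(A)\,F_j = A *_\theta F_j + A\,\tilde{R}(F_j) = A \bullet_\theta F_j - F_j\,\tilde R(A),
\]
valid in any Rota--Baxter algebra of weight $\theta$ and encoding the splitting of $R(A)F_j$ into a Rota--Baxter and a pre-Lie contribution. Applying the first of these to each term $R\bigl(\mathcal{L}_{n-1}^{(j)}\bigr)\,F_j$ of the LHS recursion, expanding $\mathcal{L}_{n-1}^{(j)}$ via the inductive hypothesis as a sum of $\mathcal{D}^{\bullet\theta}$-expressions, and collecting terms according to the set $B$ of indices that get absorbed into the right pre-Lie action on $F_n$, one matches the two expressions block by block. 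Conceptually, the iterated right pre-Lie operators $r_{\bullet_\theta F_\bullet}$ play the role non-commutative substitutes of the commutative rescalings $r_{\theta F_\bullet}$ of \eqref{lr}, so the combinatorial rearrangement that turns the commutative Bohnenblust--Spitzer formula into \eqref{clBSpPerm} carries through to the non-commutative setting. The detailed combinatorial bookkeeping (which is the main technical labour) is carried out in \cite{EFMP}.
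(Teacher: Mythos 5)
The survey itself offers no proof of this proposition: it is stated with the citation \cite{EFMP}, so there is no argument in the paper to measure yours against. Judged on its own terms, your plan sets up a correct combinatorial skeleton. The recursion $\mathcal{L}_n=\sum_{j=1}^n R\bigl(\mathcal{L}_{[n]\setminus\{j\}}\bigr)F_j$ for the left-hand side is right; the factorisation $\mathcal{D}^{\bullet\theta}_{\sigma} = \mathcal{D}^{\bullet\theta}_{\sigma'} *_\theta \bigl((\cdots(F_n\bullet_\theta F_{b_1})\cdots)\bullet_\theta F_{b_k}\bigr)$ is legitimate because the cycle through $n$ is always the last factor in the canonical decomposition (so no commutativity of $*_\theta$ is needed); and the two bridging identities $R(A)F = A*_\theta F + A\tilde{R}(F)$ and $R(A)F = A\bullet_\theta F - F\tilde{R}(A)$ both check out against the definitions \eqref{doubleasso} and \eqref{pre-LieNew}. (One small point: for $B=[n-1]$ the factor $\mathcal{L}_{\emptyset}$ must be read as absent rather than as a unit for $*_\theta$, since $*_\theta$ is not unital.)

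The genuine gap is that the inductive step is only reduced, never established. After invoking the inductive hypothesis on both sides, what remains to prove is
\[
\sum_{j=1}^n R\bigl(\mathcal{L}_{[n]\setminus\{j\}}\bigr)F_j \;=\; \sum_{B\subseteq[n-1]} \mathcal{L}_{[n-1]\setminus B} \,*_\theta\, \Omega^B(F_n),
\]
with $\Omega^B$ as in your plan, and this is not mere bookkeeping. The two sides are decomposed along incompatible lines: the left side is symmetric in which letter $j$ comes last, whereas the right side is organised around the distinguished maximal letter $n$. In particular, for each $j\neq n$ the term $R\bigl(\mathcal{L}_{[n]\setminus\{j\}}\bigr)F_j$ still carries $F_n$ buried inside the iterated $R$'s, and extracting it into the right pre-Lie words $(\cdots(F_n\bullet_\theta F_{b_1})\cdots)\bullet_\theta F_{b_k}$ requires its own argument (a further induction or a dedicated lemma), not just the two splitting identities you quote. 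Deferring exactly this step to \cite{EFMP} --- the very reference the proposition is quoted from --- means the crux of the proof is outsourced rather than supplied; as it stands the proposal is a sound reduction followed by an appeal to authority, not a proof.
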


Notice that the identity has even a $q$-analogue  \cite{NovThi}.

To approach the Baker--Campbell--Hausdorff problem generalised to Rota--Baxter algebra, it is useful to recall Spitzer's classical formula and its rewriting in terms of the generating series of the Bernoulli numbers given in (\ref{magnus}), and to compare it to Magnus classical solution of the problem. We already recalled that in his seminal 1954 paper \cite{Magnus}, Magnus considered the (say, matrix valued) solution $Y(t)$ of the linear differential equation $\dot{Y}(t)=A(t)Y(t)$, $Y(0)=1$ and showed that the logarithm $\Omega(A)(t):=\log(Y(t))$ is the solution of the differential equation:
$$
	\dot{\Omega}(A)=\frac{ad_{\Omega(A)}}{\mathrm{e}^{ad_{\Omega(A)}}-1}(A),
$$
where $ad$ is the ordinary Lie adjoint action ($ad_{x}(y)=[x,y]=xy-yx$). We remark that, since $\Omega(A)(0) = 0$, the fact that the indefinite Riemann integral is a weight zero Rota--Baxter map, implies that:
$$
	ad_{\Omega(A)}(A)=[\Omega(A),A]=  \dot{\Omega}(A)  \bullet_0 A
				     = \ell_{\dot{\Omega}(A)\bullet_0 }(A), 
$$
where $\ell_{x{\bullet_0} }(y):=x\bullet_0 y$. 

The generalisation to arbitrary Rota--Baxter algebras unifies Spitzer's classical formula with Magnus' expansion thanks to the pre-Lie product:

\begin{prop} \label{prop:ncSpitzer} 
Let $A$ be a Rota--Baxter algebra of weight $\theta$. If $\harpoonl{\exp}_{{\!\scriptscriptstyle{R}}}(\lambda x)$ is a solution in $A[[\lambda]]$ of the left fixpoint equation in \eqref{atkinson}, then the element $\Omega'(\lambda x)$ defined such that $R(\Omega'(\lambda x))=\log(\harpoonl{\exp}_{{\!\scriptscriptstyle{R}}}(\lambda x))$ satisfies \cite{EFM2}:
\begin{equation}
\label{pLMag}
	\Omega'(\lambda x)
	=\frac{\ell_{ -\Omega'(\lambda x){\bullet_\theta}}}{\mathrm{e}^{\ell_{ -\Omega'(\lambda x){\bullet_\theta} }}-1}(\lambda x)
	=\lambda x+\sum_{n>0}\frac{B_n}{n!}\ell_{-\Omega'(\lambda x){\bullet_\theta}}^n(\lambda x),
\end{equation}
where $\ell_{x{\bullet_\theta} }(y):=x\bullet_\theta y$. This series is called pre-Lie Magnus expansion.
\end{prop}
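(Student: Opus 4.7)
The strategy is to define $\chi := \Omega'(\lambda x) \in \lambda A[[\lambda]]$ as the unique formal solution of the displayed fixpoint equation. This is well-defined inductively, because $\ell^n_{-\chi\bullet_\theta}(\lambda x)$ has $\lambda$-valuation at least $n+1$, so the coefficients $\chi_n$ in $\chi = \sum_{n\geq 1}\chi_n\lambda^n$ are determined recursively from $\chi_1 = x$ and the higher Bernoulli terms. I then set $U := \exp(R(\chi))$ and prove that $U$ satisfies the left fixpoint equation $U = 1 + \lambda R(Ux)$. Uniqueness of the solution to \eqref{atkinson} then forces $U = \harpoonl{\exp}_{R}(\lambda x)$, whence $R(\chi) = \log \harpoonl{\exp}_{R}(\lambda x)$, which is the claim.

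The key structural observation is that the pre-Lie operator decomposes as a sum of commuting left and right multiplications: since $\ell_{a\bullet_\theta}(y) = R(a)y + y\tilde R(a) = L_{R(a)}(y) + R_{\tilde R(a)}(y)$ and the multiplication operators $L_{R(a)}$ and $R_{\tilde R(a)}$ always commute, the exponential factorises as
\[
	e^{\ell_{-\chi\bullet_\theta}}(y) = e^{-R(\chi)}\, y\, e^{-\tilde R(\chi)} = U^{-1}\, y\, V^{-1},
\]
with $V := \exp(\tilde R(\chi))$. Clearing denominators in the fixpoint equation gives $(e^{\ell_{-\chi\bullet_\theta}} - 1)(\chi) = \ell_{-\chi\bullet_\theta}(\lambda x)$, which on conjugation by $U$ on the left and $V$ on the right becomes the compact identity
\[
	U\chi V = \chi + R(\chi)\cdot(U\lambda x V) + (U\lambda x V)\cdot\tilde R(\chi).
\]

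From here I differentiate $U = \exp(R(\chi))$ with respect to $\lambda$ and apply the classical Hausdorff identity $\dot U\, U^{-1} = \phi(\mathrm{ad}_{R(\chi)})(R(\dot\chi))$ with $\phi(t) = (e^t - 1)/t$, which inverts via the Bernoulli generating series $t/(e^t-1) = \sum_{n\geq 0}\frac{B_n}{n!}t^n$ into
\[
	R(\dot\chi) = \dot U\, U^{-1} + \sum_{n\geq 1}\frac{B_n}{n!}\,\mathrm{ad}_{R(\chi)}^n(\dot U\, U^{-1}).
\]
Differentiating the displayed identity above with respect to $\lambda$ gives a separate expression for $R(\dot\chi)$ purely in terms of $U$, $V$, $\lambda x$ and their derivatives; equating the two, and using the weight-$\theta$ Rota--Baxter relation \eqref{RBR} together with its mixed variants \eqref{mixedRBR} to rewrite iterated commutators of the form $\mathrm{ad}_{R(\chi)}^n(\cdot)$, yields $\dot U = R(Ux) + \lambda R(\dot U x)$. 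Integration with $U(0) = 1$ then recovers $U = 1 + \lambda R(Ux)$, closing the loop.

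The main obstacle is this final term-by-term reconciliation: the Bernoulli expansion arising from the classical Hausdorff identity has to be matched against the one arising from the pre-Lie operator $\ell_{-\chi\bullet_\theta}$, and pushing this through by hand is a delicate manipulation resting essentially on the weight-$\theta$ Rota--Baxter axiom. A cleaner, more conceptual route---the one adopted in \cite{EFM2} and revisited in \cite{EFP2014}---sidesteps the combinatorial matching by lifting the problem into the Oudom--Guin enveloping algebra $(k[A],\ast)$ of the pre-Lie algebra $(A,\bullet_\theta)$ of Proposition~\ref{prop:pre-LieRB}: both sides of the comparison are exhibited as the image, under the canonical map $\iota$ of Theorem~\ref{thm:keyth}, of a single ``Magnus element'' in the completion $\widehat{k[A]}$, namely the analogue of $\log^{\ast}\circ\exp^{\circ}(\lambda x)$ identified at the end of Section~\ref{ssect:flows}.
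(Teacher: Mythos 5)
Your set-up is sound and the first half of the argument is correct and genuinely illuminating: the observation that $\ell_{a\bullet_\theta}$ splits as the sum of the commuting operators of left multiplication by $R(a)$ and right multiplication by $\tilde R(a)$, so that $\mathrm{e}^{\ell_{-\chi\bullet_\theta}}(y)=U^{-1}\,y\,V^{-1}$ with $U=\exp(R(\chi))$, $V=\exp(\tilde R(\chi))$, and the resulting identity $U\chi V=\chi+R(\chi)(U\lambda xV)+(U\lambda xV)\tilde R(\chi)$ (whose right-hand side is just $\chi+\chi\bullet_\theta(U\lambda xV)$), are all correct. Defining $\chi$ recursively from \eqref{pLMag}, aiming to show $U=1+\lambda R(Ux)$, and then invoking uniqueness of the solution of \eqref{atkinson} is also the right logical architecture, and it properly resolves the mild ambiguity in the statement (a priori only $R(\Omega')$ is pinned down).

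The decisive step, however, is missing. Everything up to the compact identity uses only $\tilde R=\theta\,\mathrm{Id}-R$ and associativity; the Rota--Baxter relation has not yet entered. The entire content of the proposition is concentrated in the sentence ``equating the two \ldots{} yields $\dot U=R(Ux)+\lambda R(\dot Ux)$'': differentiating $U\chi V=\chi+R(\chi)P+P\tilde R(\chi)$ in $\lambda$ entangles $\dot\chi$, $R(\dot\chi)$, $\dot U$ and $\dot V$ in a single relation, and disentangling it against the Hausdorff expansion of $\dot U\,U^{-1}$ is precisely the hard combinatorial identity the proposition encodes. As written this is an assertion of the conclusion rather than a derivation, and you acknowledge as much before deferring to \cite{EFM2}. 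So the proposal is a good proof plan whose key lemma is outsourced. For the record, the survey itself does not prove Proposition~\ref{prop:ncSpitzer} either --- it cites \cite{EFM2} --- and the route it does develop in full is the one you sketch in your final paragraph: lift to the enveloping algebra of the pre-Lie algebra of Proposition~\ref{prop:pre-LieRB} via Theorem~\ref{thm:keyth} and identify $\Omega'$ with $\log^{\ast}\circ\exp^{\circ}$, which is proved in Section~\ref{ssect:flows} by a short valuation argument ($y\ast x=\{y\}x+o(1)$) rather than by differentiation in $\lambda$. To make your text a complete proof you must either carry out the term-by-term matching or actually execute that lift.
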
 

Here, the prime notation shall remind the reader of Magnus' original differential equation. As a remark we mention that a similar approach applies to Fer's expansion \cite{EFM2}.  Fr\'ed\'eric Chapoton showed in \cite{Chap} that (\ref{pLMag}) is of significant interest in the context of the theory of Lie idempotents as well as the theory of operads. Let us make the first few terms of  $\Omega'(\lambda x)$ explicit:
\begin{eqnarray*}
 	\Omega'(\lambda x) &= & 
	\lambda x 
	+ \frac 12 \lambda^2 x\bullet_\theta x 
	+\frac 14 \lambda^3  (x \bullet_\theta x) \bullet_\theta x
 	+\frac 1{12} \lambda^3 x \bullet_\theta  (x\bullet_\theta x) + \\
	&&\qquad - \lambda^4 \frac 18  ((x \bullet_\theta  x) \bullet_\theta x ) \bullet_\theta  x 
			- \lambda^4 \frac{1}{24}\Big( (x \bullet_\theta  (x \bullet_\theta  x )) \bullet_\theta  x\\
	&&\qquad \qquad + x \bullet_\theta  ((x \bullet_\theta  x ) \bullet_\theta x) 
	+  (x \bullet_\theta x) \bullet_\theta  (x  \bullet_\theta x)  \Big) + \cdots.
\end{eqnarray*}
Observe that the pre-Lie identity implies a reduction of the number of terms at order four: 
$$
	 -\frac{1}{6} \bigl((x \bullet_\theta x)  \bullet_\theta  x \bigr)  \bullet_\theta x
	 - \frac{1}{12} x  \bullet_\theta \bigl((x  \bullet_\theta x)  \bullet_\theta  x \bigr).
$$ 


We conclude this section with an observation. The attentive reader may wonder about the role of the post-lie algebra structure shown in \eqref{postLie1}-\eqref{postLie2}. 
The post-Lie product $x \triangleright y := [R(x),y]$ induced on the Lie Rota--Baxter algebra $\mathfrak g$, permits to solve the Lie bracket flow equation
$$
	\dot{x}(t)= - x \triangleright x, \quad x(0)=x_0.
$$ 
Indeed, by general results of post-Lie algebra, one can show that 
$$
	x(t) =  \exp\big(- R(\chi(x_0t))\big) x_0 \exp\big(R(\chi(x_0t))\big).
$$
Here, $\chi(x_0t) \in \mathfrak g[[t]]$ is the post-Lie Magnus expansion. We refrain from giving more details and refer to \cite{EFM2018,M2019} and references therein, where the far more intricate notion of post-Lie Magnus expansion and related topics are reviewed in detail.


\subsection{Free Rota--Baxter algebras}
\label{ssect:ncCartier-Rota}

We conclude this article by the study of free Rota--Baxter algebras, the word problem (find a basis for these algebras), and insights on Hopf algebra structures in relation to the developments in the first part of this article.
The non-commutative analogue of Cartier's theory of free commutative Rota--Baxter algebras is intimately related to non-commutative analogues of shuffle and quasi-shuffle algebras. A natural way to represent these structures is in terms of graphical, i.e., combinatorial objects, such as planar rooted trees \cite{AgMo,EFG1}.

On the other hand, the description of the non-commutative analogue of Rota's classical solution to the word problem  is surprisingly straightforward \cite{EGBP,EFMP}. One simply replaces in Rota's original work as accounted for earlier in this article the algebra of polynomials in an infinite number variables, $k[x_1,\ldots,x_n,\ldots]$, by its associative analogue, i.e., the free associative algebra (or tensor algebra) over the alphabet  $X:=\{x_1,\ldots,x_n,\ldots\}$. Rota's results continue to hold in this non-commutative setting, that is, the operator $R(x_1,\ldots,x_n,\ldots):=(0,x_1,x_1+x_2,\ldots,x_1+\cdots+x_n,\ldots)$ still is a Rota--Baxter map, and the Rota--Baxter subalgebra generated by the sequence $x=(x_1,\ldots,x_n,\ldots)$ gives a presentation of the free Rota--Baxter algebra over one generator $x$.         

Besides the fact that this gives a simple answer to the word problem, it generalises Rota's seminal insight, unveiling the link between free commutative Rota--Baxter algebras and symmetric functions, to one of the fundamental notions in modern algebraic combinatorics, i.e., the theory of non-commutative symmetric functions -- the latter has been developed in the last 25 years by J.-Y.~Thibon and his collaborators (G.~Duchamp, F.~Hivert, J.-C.~Novelli \it inter alia\rm ) in a series of important articles from \cite{gelfand} to \cite{duc}.  

Indeed, recall that, if we calculate the terms on the left hand side of Spitzer's classical identity in the commutative case, i.e., $R^{(n)}(x)=R(R^{(n-1)}(x)x)$, using Rota's presentation of the free Rota--Baxter algebra on one generator, then we find a sequence of elementary symmetric functions $\sum_{0<i_1< \cdots <i_n<k}x_{i_1} \cdots x_{i_n}$ -- this is the basis for Rota's proof of the Spitzer formula, as a corollary of Waring's identity. In the non-commutative case the same observation holds, but this time non-commutative variables enter the picture. As a result, Florent~Hivert's theory of quasi-symmetric functions in non-commutative variables applies. This approach shares the same advantages with Rota's original work. Indeed, it allows to apply a whole range of results and techniques from the theory of non-commutative symmetric functions.

\medskip

Let us conclude by explaining briefly how these ideas connect to the notion of descent algebra introduced at the beginning of the article. Recall that the construction of the descent algebra was motivated by the combinatorial formulas for products of iterated integrals. It resulted that the descent algebra is a free associative algebra over generators corresponding to iterated integrals over standard simplices.
In the Rota--Baxter context, these ideas generalise as follows. First, as in the commutative case, one can show that the $R^{(n)}(x)$ are free generators of a subalgebra of $RB(x)$, the free Rota--Baxter algebra over $x$ \cite{EGBP,EFMP}. One can equip this algebra with a Hopf algebra structure (from the point of view of iterated integrals, the coproduct is induced by the Chen rule):

\begin{defn}
The Spitzer algebra is the cocommutative Hopf algebra that, as an associative algebra, is the free associative algebra generated by the $R^{(n)}(x)$ in $RB(x)$. The coproduct is given by
$$
	\Delta(R^{(n)}(x)):=\sum\limits_{i=0}^nR^{(i)}(x)\otimes R^{(n-i)}(x).
$$ 
Notice that the map $1_n\longmapsto R^{(n)}(x)$ induces an isomorphism between  the descent algebra and the Spitzer algebra (as Hopf algebras).

Equivalently, the Spitzer algebra is freely generated by the $R^{(n)}(x)$ and the coproduct defined by
$$
	\Delta(R(x^n)):=R(x^n)\otimes 1+1\otimes R(x^n).
$$
\end{defn}

This construction allows, for example, to interpret the pre-Lie Magnus expansion for Rota--Baxter algebras as surveyed in the previous section in group and Lie theoretical terms (since $\harpoonl{\exp}_{{\!\scriptscriptstyle{R}}}(\lambda x)$ is a group-like element in the Spitzer algebra, and its logarithm a primitive element).

Examples of applications of descent and free Lie algebra techniques to the study of Rota--Baxter algebras can be found in \cite{EFGP,EGBP,EFMP}.



\end{document}